\documentclass[a4paper,british]{amsart}
\usepackage{amscd}
\usepackage{amssymb}
\usepackage{enumitem}

\usepackage[applemac]{inputenc}
\usepackage{amsmath, amsthm, amssymb}
\usepackage[french,english]{babel}
\usepackage{hyperref}

\usepackage{mathtools}
\usepackage[arrow,curve,matrix]{xy}

\usepackage{colortbl}
\usepackage{graphicx}
\usepackage{tikz}
\usepackage{tikz-cd}

\usepackage{comment}

\usepackage{mathrsfs}
\usepackage[retainorgcmds]{IEEEtrantools}

\DeclareFontFamily{OMS}{rsfs}{\skewchar\font'60}
\DeclareFontShape{OMS}{rsfs}{m}{n}{<-5>rsfs5 <5-7>rsfs7 <7->rsfs10 }{}
\DeclareSymbolFont{rsfs}{OMS}{rsfs}{m}{n}
\DeclareSymbolFontAlphabet{\scr}{rsfs}

\definecolor{linkred}{rgb}{0.7,0.2,0.2}
\definecolor{linkblue}{rgb}{0,0.2,0.6}

\setdescription{labelindent=\parindent, leftmargin=2\parindent}
\setitemize[1]{labelindent=\parindent, leftmargin=2\parindent}
\setenumerate[1]{labelindent=0cm, leftmargin=*, widest=iiii}

\setcounter{tocdepth}{1}

\numberwithin{figure}{section}

\usepackage[hyperpageref]{backref}

\sloppy

\usepackage{mathpazo}

\def\bB{{\mathbb B}}

\def\bR{{\mathbb R}}
\def\bN{{\mathbb N}}
\def\bQ{{\mathbb Q}}

\newcommand{\sA}{\scr{A}}
\newcommand{\sB}{\scr{B}}
\newcommand{\sC}{\scr{C}}
\newcommand{\sD}{\scr{D}}
\newcommand{\sE}{\scr{E}}
\newcommand{\sF}{\scr{F}}
\newcommand{\sG}{\scr{G}}
\newcommand{\sH}{\scr{H}}

\newcommand{\sK}{\scr{K}}
\newcommand{\sL}{\scr{L}}
\newcommand{\sM}{\scr{M}}
\newcommand{\sN}{\scr{N}}
\newcommand{\sO}{\scr{O}}

\newcommand{\sQ}{\scr{Q}}

\newcommand{\sT}{\scr{T}}

\newcommand{\wtilde}{\widetilde}

\newcommand{\onto}{\twoheadrightarrow}

\newcommand{\hooklongrightarrow}{\lhook\joinrel\longrightarrow}

\theoremstyle{plain}
\newtheorem{theorem}{Theorem}[section]

\newtheorem{proposition}[theorem]{Proposition}
\newtheorem{corollary}[theorem]{Corollary}
\newtheorem{lemma}[theorem]{Lemma}
\newtheorem{conjecture}[theorem]{Conjecture}

\theoremstyle{definition}
\newtheorem{definition}[theorem]{Definition}

\theoremstyle{question}
\theoremstyle{remark}
\newtheorem{remark}[theorem]{Remark}
\newtheorem{notation}[theorem]{Notation}

\newtheorem{set-up}[theorem]{Set-up}
\newtheorem{claim}[theorem]{Claim}

\setlist[enumerate]{label=(\thetheorem.\arabic*), before={\setcounter{enumi}{\value{equation}}}, after={\setcounter{equation}{\value{enumi}}}}

\numberwithin{equation}{theorem}

\newcommand{\newabstract}[1]{%
  \par\bigskip
  \csname otherlanguage*\endcsname{#1}%
  \csname captions#1\endcsname
  \item[\hskip\labelsep\scshape\abstractname.]
}
\DeclareMathOperator{\Amp}{Amp}
\DeclareMathOperator{\Eff}{Eff}
\DeclareMathOperator{\Bs}{Bs}

\DeclareMathOperator{\codim}{codim}

\DeclareMathOperator{\WDiv}{WDiv}

\DeclareMathOperator{\Gal}{Gal}
\DeclareMathOperator{\HN}{HN}

\DeclareMathOperator{\Mov}{Mov}
\DeclareMathOperator{\N}{N}
\DeclareMathOperator{\rank}{rank}
\DeclareMathOperator{\reg}{reg}

\DeclareMathOperator{\supp}{supp}

\makeatletter
\let\saveqed\qed
\renewcommand\qed{%
   \ifmmode\displaymath@qed
   \else\saveqed
   \fi}
\makeatother

\makeatletter
\hypersetup{
  pdfauthor={\authors},
  pdftitle={\@title},
  pdfsubject={\@subjclass},
  pdfkeywords={\@keywords},
  pdfstartview={Fit},
  pdfpagelayout={TwoColumnRight},
  pdfpagemode={UseOutlines},
  bookmarks,
  colorlinks,
  linkcolor=linkblue,
  citecolor=linkred,
  urlcolor=linkred}
\makeatother

\begin{document}
\bibliographystyle{alpha}

\keywords{Classification theory, Miyaoka--Yau inequality, Movable cone of divisors, Minimal Models, 
Effective non-vanishing, Lang--Vojta's conjecture.}

\subjclass[2010]{14E30, 14J70, 14B05.}

\title[Orbifold Chern classes inequalities]{Orbifold Chern classes inequalities and applications}

\author{Erwan Rousseau}
\address{Erwan Rousseau \\ Institut Universitaire de France
	\& Univ Brest\\ UMR CNRS 6205\\\ Laboratoire de Math\'ematiques de Bretagne Atlantique\\ 
		6, Avenue Victor Le Gorgeu, 29200 Brest\\
		France} 
\email{\href{mailto:erwan.rousseau@univ-brest.fr}{erwan.rousseau@univ-brest.fr}}
\urladdr{\href{http://eroussea.perso.math.cnrs.fr/}{http://eroussea.perso.math.cnrs.fr/}}

\author{Behrouz Taji}

\address{Behrouz Taji, The University of Sydney, School of Mathematics and Statistics F07, NSW 2006 Australia}
\email{\href{mailto:behrouz.taji@sydney.edu.au}{behrouz.taji@sydney.edu.au}}
\urladdr{\href{http://www.maths.usyd.edu.au/u/behrouzt/}{http://www.maths.usyd.edu.au/u/behrouzt/}}

\thanks{Behrouz Taji was partially supported by the DFG-Graduiertenkolleg GK1821
  ``Cohomological Methods in Geometry'' at the University of Freiburg. Erwan Rousseau was partially supported by the ANR project \lq\lq FOLIAGE\rq\rq{}, ANR-16-CE40-0008.}

\begin{abstract}
In this paper we prove that given a pair $(X,D)$ of a threefold $X$ and a boundary divisor $D$
with mild singularities, if $(K_X+D)$ is 
movable, then the orbifold second Chern class $c_2$ of $(X,D)$ is pseudoeffective. 
This generalizes the classical result of Miyaoka on the pseudoeffectivity of $c_2$
for minimal models. As an application, we give a simple solution to Kawamata's effective 
non-vanishing conjecture in dimension $3$, where we prove that $H^0(X, K_X+H)\neq 0$,
whenever $K_X+H$ is nef and $H$ is an ample, effective, reduced Cartier divisor.
Furthermore, we study Lang--Vojta's 
conjecture for codimension one subvarieties and prove that minimal threefolds of general type
have only finitely many Fano, Calabi-Yau or Abelian 
subvarieties of codimension one that are mildly singular and whose numerical classes 
belong to the movable cone. 

\end{abstract}

\maketitle

\tableofcontents

\section{Introduction}
It is well known that the Chern classes of nef vector bundles over smooth projective 
varieties satisfy certain inequalities \cite{DPS}. 
More generally, 
a theorem of Miyaoka \cite{Miyaoka87} states that over a normal, projective variety (that is smooth in codimension two)
any torsion free, coherent sheaf $\sE$ that is \emph{semipositive} with respect to the tuple of ample 
divisors $(H_1,\dots,H_{n-1})$ and whose determinant $\det(\sE)$ is nef, 
verifies the inequality 

$$c_2(\sE)\cdot H_1\dots H_{n-2} \geq 0.$$

On the other hand, thanks to Miyaoka's celebrated \emph{generic semipositivity} result, cf.~\cite{Miyaoka87}, and  
the result of Boucksom, Demailly, P\u{a}un and Peternell (\cite{BDPP}), when $K_X$ is pseudoeffective, 
the cotangent bundle $\Omega^1_X$ of a smooth projective variety 
is generically semipositive.
As a result, for a smooth projective variety $X$ with $K_X$ nef, the inequality
\begin{equation}\label{eq:MiyChern}
c_2(X)\cdot H_1\dots H_{n-2} \geq 0
\end{equation}
holds, for any tuple of ample divisors $(H_1, \ldots, H_{n-2})$.

Recent works of Campana and P{\u a}un (\cite{CP13}, \cite{CP16}) have generalized some parts of Miyaoka's results, 
showing in particular that if $X$ is a smooth projective variety with $K_X$ pseudoeffective,  then $\Omega^1_X$ is semipositive 
with respect to any \emph{movable} class $\alpha \in \overline\Mov_1(X)$ (see Definition~\ref{def:MOVE}).

Our first result is a natural generalization of the inequality (\ref{eq:MiyChern}) to the setting of pairs with movable log-canonical divisors.
\begin{theorem}
\label{thm:BabyPositiveC2}
Let $X$ be a normal projective threefold that is smooth in codimension two and $D$ a
reduced effective divisor such that $(X,D)$ 
has only isolated lc singularities. If $(K_X+D)\in \overline\Mov^1(X)$, then for any ample divisor $A$, the 
inequality $$c_2\bigl((\Omega^1_X\log(D))^{**}\bigr) \cdot A  \geq 0$$ holds.

\end{theorem}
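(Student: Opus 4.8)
The plan is to reduce the positivity of the orbifold second Chern class to an application of the generalized Miyaoka-type semipositivity inequality described in the introduction, applied to the reflexive log-cotangent sheaf.

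Concretely, set $\mathcal{E} := (\Omega_X\log(D))^{**}$, so that $\det\mathcal{E}\cong\mathcal{O}_X(K_X+D)$. Since $X$ is smooth in codimension two and $(X,D)$ has only isolated, hence codimension-three, singularities, the reflexive sheaf $\mathcal{E}$ is locally free away from a finite set; its Chern classes are therefore well defined as cycle classes and the intersection number $c_2(\mathcal{E})\cdot A$ is meaningful. I need two ingredients: generic semipositivity of $\mathcal{E}$, and a Bogomolov-type discriminant inequality on a general surface section. The hypothesis $(K_X+D)\in\Mov^1(X)_{\bQ}$ feeds into both.

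First I would establish the generic semipositivity. As a movable class, $(K_X+D)$ is in particular pseudoeffective, so the logarithmic analogue of the Campana--P{\u a}un theorem recalled in the introduction applies to the pair $(X,D)$ and yields that $\mathcal{E}$ is semipositive with respect to every movable curve class $\alpha\in\Mov_1(X)$. Equivalently, relative to a movable polarization every torsion-free quotient of $\mathcal{E}$ has $\alpha$-nonnegative degree, so the minimal Harder--Narasimhan slope satisfies $\mu_{\min,\alpha}(\mathcal{E})\geq 0$.

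Next I would run Miyaoka's computation for $c_2$. Passing to a general surface $S\in|mA|$ and rescaling, one has $c_2(\mathcal{E})\cdot A=\tfrac1m\,c_2(\mathcal{E}|_S)$, and on $S$ one expands $c_2$ along the graded pieces $\mathcal{G}_i$ of the Harder--Narasimhan filtration of $\mathcal{E}$, writing $c_2(\mathcal{E}|_S)=\sum_i c_2(\mathcal{G}_i|_S)+\sum_{i<j}\bigl(c_1(\mathcal{G}_i)\cdot c_1(\mathcal{G}_j)\bigr)|_S$. The diagonal terms are bounded below by Bogomolov's inequality applied to the semistable pieces $\mathcal{G}_i$, while the off-diagonal terms are estimated via the Hodge index theorem together with the nonnegativity of all Harder--Narasimhan slopes obtained in the previous step. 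Summing these estimates, generic semipositivity forces $c_2(\mathcal{E}|_S)\geq 0$, and hence $c_2(\mathcal{E})\cdot A\geq 0$.

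The main obstacle is that $(K_X+D)$ is only movable, not nef, so Miyaoka's theorem as quoted in the introduction does not apply verbatim: one must systematically replace ample complete-intersection polarizations by movable classes throughout both the Harder--Narasimhan formalism and the discriminant inequality, which is precisely the point at which the Campana--P{\u a}un machinery is indispensable. A secondary technical point, to be checked carefully, is that the isolated log canonical singularities of $(X,D)$ contribute no spurious correction terms to the Chern class computation on the general section $S$, so that the surface-level inequalities faithfully compute $c_2(\mathcal{E})\cdot A$.
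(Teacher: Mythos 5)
Your proposal has the right skeleton---Campana--P\u{a}un semipositivity feeding into Miyaoka's Harder--Narasimhan/Bogomolov/Hodge-index computation on a surface section---and this is indeed the skeleton of the paper's proof (Proposition~\ref{prop:c2} and Corollary~\ref{cor:pseff-c_2}). But the two inputs you treat as available are exactly the two points the paper must work hardest to establish, and as stated both are gaps. First, you claim that since $(K_X+D)$ is pseudoeffective, ``the logarithmic analogue of the Campana--P\u{a}un theorem'' gives semipositivity of $\mathcal{E}=(\Omega_X\log D)^{**}$ with respect to \emph{every} movable class $\alpha\in\Mov_1(X)$. Theorem~\ref{thm:CP16} (from \cite{CP16}) is proved for \emph{snc} pairs, whereas $(X,D)$ here is singular; the paper says explicitly that it is not clear how to extend the result to singular pairs in general, and Section~\ref{sect:section4_semipositivity} (Propositions~\ref{prop:CP16refined} and~\ref{prop:SingCP}) obtains the extension only for special complete-intersection classes of the form $f^*(H_1,\dots,H_{n-2},P+H_{n-1})$, by passing to adapted covers and a log resolution and descending via Galois invariance and the projection formula. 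Your blanket assertion is strictly stronger than what is proved anywhere, and you give no argument for it.

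Second, and decisively: your computation on $S$ needs the nonnegativity of the slopes of the Harder--Narasimhan quotients of $\mathcal{E}|_S$ with respect to a polarization built from the movable class $K_X+D$. Semipositivity, in the sense you have it, constrains quotients of $\mathcal{E}$ \emph{on $X$}; the graded pieces of the HN filtration of $\mathcal{E}|_S$ need not extend to quotients on $X$, so to transfer the information you must know that the HN filtration commutes with restriction to $S$ (and to curves in $S$). For ample polarizations this is Mehta--Ramanathan/Langer, but for movable polarizations it \emph{fails} in general---the paper recalls the example of \cite{BDPP}: the cotangent bundle of a projective K3 surface is not pseudoeffective, which produces movable classes for which the restriction theorem is false. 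The entire Section~\ref{sect:section3_restriction} (Lemma~\ref{lem:FujitaSubsheaf} plus Proposition~\ref{prop:restrict}, proved via Boucksom's Fujita-type statement, Proposition~\ref{prop:Fujita}, applied to the big class $P+H_2$, together with the $\bQ$-twist perturbation $\mathcal{E}\langle \frac{1}{c}\cdot H\rangle$ and the limit $c\to\infty$ in Proposition~\ref{prop:c2}) exists precisely to supply this restriction theorem for the special polarizations $(H_1,P+H_2)$. Your closing sentence acknowledges that ample polarizations must be replaced by movable ones ``throughout the Harder--Narasimhan formalism,'' but attributes this to ``the Campana--P\u{a}un machinery''; that machinery only gives semipositivity upstairs and says nothing about restriction. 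So the step ``generic semipositivity forces $c_2(\mathcal{E}|_S)\geq 0$'' is missing an argument, not merely a citation, and it is the crux of the theorem.
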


The second result is another generalization of an inequality established by Miyaoka \cite{Miyaoka87}, 
which is sometimes referred to as the Miyaoka--Yau inequality. 

\begin{theorem}\label{thm:MYIntro}
Let $X$ be a normal projective threefold that is smooth in codimension two 
and $D$ a reduced effective divisor such that $(X,D)$ has only isolated lc singularities. 
If $(K_X+D)\in \overline\Mov^1(X)$, then 

$$c_1^2\bigl((\Omega^1_X\log(D))^{**}\bigr) \cdot A \leq 3c_2\bigl((\Omega^1_X\log(D))^{**}\bigr) \cdot A,$$
for any ample divisor $A$.
\end{theorem}

Theorem~\ref{thm:MYIntro} will also be established for pairs $(X,D)$ of dimension three 
with isolated singularities (see Theorem~\ref{HD}). 

There are two main ingredients in the proof of the above inequalities. The first one is a restriction result 
for semistable sheaves 
with respect to certain movable curves. This is described in Section \ref{sect:section3_restriction}. The second 
component involves the semipositivity of the \emph{orbifold cotangent sheaves} 
for certain mildly singular pairs and is treated in Section \ref{sect:section4_semipositivity}.

The rest of the paper is devoted to two applications of Theorems~\ref{thm:BabyPositiveC2} 
and~\ref{thm:MYIntro}. 
The first is concerned with the so-called effective non-vanishing conjecture.

\begin{conjecture}[Effective non-vanishing conjecture of Kawamata]
\label{conj:Effective}
Let $Y$ be a normal projective variety and $D_Y$ an effective $\bR$-divisor such that 
$(Y,D_Y)$ is klt. Let $H$ be an ample, or more generally big and nef, divisor such that 
$(K_Y+D_Y+H)$ is Cartier and nef. Then $H^0(X, K_Y+D_Y+H)\neq 0$.
\end{conjecture}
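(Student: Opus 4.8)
The plan is to reduce the conjecture to a statement about a single Euler characteristic, and then attack that by induction on $\dim Y$ through the geometry of log canonical centers. First, set $L := K_Y + D_Y + H$, a nef Cartier divisor. Since $(Y,D_Y)$ is klt and $L-(K_Y+D_Y)=H$ is big and nef, Kawamata--Viehweg vanishing applies to the pair $(Y,D_Y)$ and gives
\[
H^i(Y,L)=0 \quad\text{for all } i>0,
\]
so that $h^0(Y,L)=\chi(Y,L)$. As $h^0\geq 0$, the conjecture is equivalent to the \emph{strict} positivity $\chi(Y,L)>0$, i.e.\ $\chi(Y,L)\neq 0$. All of the difficulty is concentrated in this last point.

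I would then pursue two complementary routes. The first is Riemann--Roch: one expands $\chi(Y,L)$ as a polynomial in the Chern numbers of $Y$ and the intersection numbers of $L$. In low dimension the relevant terms are controllable: for a surface the formula reads $\chi(Y,L)=\chi(\mathcal{O}_Y)+\tfrac12\,L\cdot(L-K_Y)$ with $L-K_Y=D_Y+H$, whence $L\cdot(L-K_Y)=L\cdot D_Y+L\cdot H\geq 0$ by nefness of $L$, and positivity is classical. In dimension $\geq 3$, however, the expansion involves $c_2(Y)$ (or its orbifold refinement) together with mixed terms whose signs are not a priori controlled. This is precisely where the orbifold Chern class inequalities of this paper enter: the pseudo-effectivity of $c_2\bigl((\Omega_X\log(D))^{**}\bigr)$ in Theorem~\ref{thm:BabyPositiveC2} and the accompanying Miyaoka--Yau inequality supply the missing sign control, which is how the threefold case is to be settled.

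The second, dimension-agnostic route is the log canonical center induction of Ambro and Kawamata. Using bigness of $H$ (Kodaira's lemma, $H\sim_{\bQ}A+E$ with $A$ ample and $E$ effective) one perturbs the boundary to an $\bR$-divisor $\Delta\sim_{\bR}(1+\delta)H$, for small $\delta>0$, so that $(Y,D_Y+\Delta)$ is log canonical with a \emph{minimal} lc center $W$ of dimension $<\dim Y$ and is klt away from $W$. Kawamata's subadjunction then produces $L|_W\sim_{\bR}K_W+D_W+H_W$ with $(W,D_W)$ klt and $H_W$ big and nef, so the inductive hypothesis yields a nonzero section on $W$. Finally one lifts it through
\[
0\longrightarrow \mathcal{I}_W\otimes L\longrightarrow L\longrightarrow L|_W\longrightarrow 0,
\]
the surjectivity of $H^0(Y,L)\to H^0(W,L|_W)$ following from the Nadel-type vanishing $H^1(Y,\mathcal{I}_W\otimes L)=0$. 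The base case $\dim W=0$ is immediate, since then $L|_W\cong\bC$.

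The hard part, and the reason the conjecture is open for $\dim Y\geq 4$, is that this induction does not close: carving out the minimal center $W$ consumes essentially all of the positivity of $H$, so the twist $L-(K_Y+D_Y+\Delta)\sim_{\bR}H-\Delta$ needed for the lifting step loses the big-and-nef positivity that Nadel vanishing requires, while subadjunction produces only $\bR$-linear data for which the restricted class $H_W$ can degenerate from big to merely pseudo-effective. Equivalently, on the Riemann--Roch side, closing the argument in arbitrary dimension demands sign control on all the higher Chern-number combinations in $\chi(Y,L)$, i.e.\ a strong generic-semipositivity and Miyaoka--Yau statement that is presently available only in the restricted geometric situations treated in this paper. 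It is exactly this positivity budget that the threefold results here manage to balance, and whose failure in general is the central obstruction.
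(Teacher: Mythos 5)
You were asked to prove a statement that the paper itself records only as a \emph{conjecture}: there is no proof of Conjecture~\ref{conj:Effective} in the paper, and your proposal, rightly, does not claim one either. What the paper actually proves is the threefold special case Theorem~\ref{thm:Main2} (with $D_Y=0$, canonical singularities, and $H$ Cartier, big and nef, and effective and reduced up to linear equivalence), and your first route coincides with the paper's strategy for that case: Kawamata--Viehweg vanishing reduces the problem to $\chi(Y,K_Y+H)\neq 0$ (this is literally the first line of the paper's proof in Subsection~\ref{subsect:ENV}), and the sign control in the Riemann--Roch expansion is supplied by the pseudo-effectivity of the orbifold $c_2$ (Theorem~\ref{thm:PositiveC2}, Corollary~\ref{cor:pseff-c_2}). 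Where your sketch is thinner than the paper's execution is in the reduction and the endgame: the paper first passes through a log resolution and a relative MMP over $Y$ (contracting $\supp(\wtilde E)$ by the Negativity Lemma) to reach a model with canonical singularities and reduced lc boundary, then takes a terminalization so that Kawamata's bound $\chi(X,\sO_X)\geq -\frac{1}{24}K_X\cdot c_2(X)$ applies in Proposition~\ref{prop:KeyIneq}; and the resulting lower bound $\frac{1}{2}(K_X+A)\cdot A\cdot\bigl((K_X+A)+A\bigr)$ is shown to be \emph{strictly} positive via the basepoint-freeness theorem, by exhibiting a surface $S\in|m(K_X+2A)|$ with $A|_S$ big. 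None of these steps appears in your outline, and the last one matters: nefness alone gives only $\chi\geq 0$. Note also that the paper's machinery requires the boundary to be an honest reduced divisor (so that $\Omega^{[1]}_X\log(D+A)$ and its orbifold $c_2$ make sense); a general klt $\bR$-boundary $D_Y$ as in the conjecture is not treated, which is one reason the conjecture remains a conjecture even for threefolds in this generality.

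Your second route (perturbation to create a minimal lc center, Kawamata subadjunction, lifting via Nadel-type vanishing) is absent from the paper; it is the standard Ambro--Kawamata induction from the literature, and your diagnosis of why it does not close in dimension $\geq 4$ --- the positivity of $H$ is consumed by cutting out the center, so the twist needed for the lifting vanishing degenerates --- is accurate and is a fair account of the state of the art. One small caveat on your surface aside: the inequality $L\cdot(L-K_Y)\geq 0$ gives only $\chi(Y,L)\geq\chi(Y,\sO_Y)$, and $\chi(\sO_Y)$ can be negative for irregular surfaces, so the dimension-two case is genuinely more delicate than pure Riemann--Roch positivity; calling it ``classical'' is correct, but the argument you indicate does not by itself settle it.
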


Using Theorem~\ref{thm:BabyPositiveC2}, in Section~\ref{sect:section6_Nonvanishing}, 
we obtain a simple proof of the following weak
version of Conjecture~\ref{conj:Effective} in dimension three.

\begin{theorem}[Non-vanishing for canonical threefolds]
\label{thm:Main}
Let $Y$ be a normal projective threefold with only canonical singularities. 
Let $H$ be a very ample divisor. If $(K_Y+H)$ is a nef and Cartier divisor and not numerically trivial,
then $H^0(Y, K_Y+H)\neq 0$.
\end{theorem}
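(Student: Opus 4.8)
The plan is to reduce the statement to a positivity of an Euler characteristic and then feed Theorem~\ref{thm:BabyPositiveC2} into Hirzebruch--Riemann--Roch. Since $Y$ has canonical (hence rational, Cohen--Macaulay, and in fact Gorenstein, because $K_Y=(K_Y+H)-H$ is Cartier) singularities and $H$ is ample, Kodaira--Kawamata--Viehweg vanishing gives $H^i\bigl(Y,\mathcal{O}_Y(K_Y+H)\bigr)=0$ for all $i>0$, so that $h^0(Y,K_Y+H)=\chi(Y,K_Y+H)$. It therefore suffices to show $\chi(Y,K_Y+H)\geq 1$. After replacing $Y$ by a terminalisation if necessary (this leaves $H^0(K_Y+H)$ and the nefness of $K_Y+H$ unchanged and makes $Y$ smooth in codimension two), a direct Riemann--Roch computation, with $c_2:=c_2\bigl((\Omega_Y)^{**}\bigr)$, rearranges to
\[
\chi(Y,K_Y+H)=\tfrac{1}{12}(K_Y+H)(K_Y+2H)\cdot H+\tfrac{1}{24}(K_Y+2H)\cdot c_2 .
\]

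Next I would introduce the orbifold structure. I choose $D\in|H|$ a general member; by Bertini, together with the fact that $Y$ is canonical, $D$ is reduced and $(Y,D)$ has only isolated lc singularities, so the pair is admissible for Theorem~\ref{thm:BabyPositiveC2}. Since $K_Y+D\sim K_Y+H$ is nef it lies in $\Mov^1(Y)_{\bQ}$ (nef classes are limits of ample, hence movable), and the theorem yields $c_2\bigl((\Omega_Y\log(D))^{**}\bigr)\cdot A\geq 0$ for every ample $A$. From the residue sequence one has, as codimension-two classes,
\[
c_2\bigl((\Omega_Y\log(D))^{**}\bigr)=c_2+K_Y\cdot D+D^2=c_2+H\cdot(K_Y+H),
\]
using $D\sim H$. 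Substituting $c_2=c_2\bigl((\Omega_Y\log(D))^{**}\bigr)-H\cdot(K_Y+H)$ into the previous display and using $(K_Y+2H)\cdot H\cdot(K_Y+H)=(K_Y+H)(K_Y+2H)\cdot H$, the cross terms collapse (the coefficient $\tfrac1{12}$ drops to $\tfrac1{24}$) to
\[
\chi(Y,K_Y+H)=\tfrac{1}{24}\Bigl[(K_Y+H)(K_Y+2H)\cdot H+(K_Y+2H)\cdot c_2\bigl((\Omega_Y\log(D))^{**}\bigr)\Bigr].
\]

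Both summands are then non-negative: the second because $K_Y+2H=(K_Y+H)+H$ is ample, so Theorem~\ref{thm:BabyPositiveC2} applies with $A=K_Y+2H$; the first because $K_Y+H$ is nef and $(K_Y+H)(K_Y+2H)\cdot H=(K_Y+H)^2\cdot H+(K_Y+H)\cdot H^2\geq 0$. For strict positivity I would distinguish two cases according to the value of $(K_Y+H)\cdot H^2$. If it is positive, then $\chi(Y,K_Y+H)>0$, hence $\geq 1$. If it vanishes, then $K_Y+H$ is a nef class pairing to zero with the interior movable curve class $H^2$, which forces $K_Y+H\equiv 0$; thus $-K_Y\equiv H$ is ample, $Y$ is a Gorenstein canonical Fano threefold, and $\chi(Y,K_Y+H)=\chi(\mathcal{O}_Y)=1$ since $h^i(\mathcal{O}_Y)=0$ for $i>0$. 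In either case $h^0(Y,K_Y+H)=\chi(Y,K_Y+H)\geq 1>0$, which is the assertion.

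The step I expect to be the main obstacle is verifying that the auxiliary pair $(Y,D)$ genuinely satisfies the hypotheses of Theorem~\ref{thm:BabyPositiveC2}, and in particular that the nef class $K_Y+H$ is admissible as a movable class. This is the one point where the hypothesis is used on the boundary of the movable cone, and it must be justified either by the closedness convention for $\Mov^1$ or by an approximation argument; crucially one cannot perturb $A$ freely to avoid the issue, since the orbifold class $c_2\bigl((\Omega_Y\log(D))^{**}\bigr)$ is pinned down by the fixed boundary $D$. A secondary technical point is the reduction guaranteeing that $Y$ is smooth in codimension two and that a general $D\in|H|$ yields only isolated lc singularities: controlling the possible non-isolated canonical (cDV) locus, and confirming the singular Riemann--Roch and vanishing statements, is where the remaining care lies.
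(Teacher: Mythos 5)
Your core strategy coincides with the paper's (Proposition~\ref{prop:KeyIneq} together with Subsection~\ref{subsect:ENV}): use Kawamata--Viehweg vanishing to reduce to a lower bound on $\chi$, combine Riemann--Roch with the bound $\chi(\sO_X)\geq-\tfrac{1}{24}K_X\cdot c_2$ on a terminal model, and feed in pseudo-effectivity of the (orbifold) second Chern class; your Chern-class identity $c_2\bigl((\Omega\log D)^{**}\bigr)=c_2+(K+D)\cdot D$ and the resulting expression for $\chi$ agree with the paper's computation. Your endgame is in fact cleaner than the paper's: the dichotomy $(K_Y+H)\cdot H^2>0$ versus $K_Y+H\equiv 0$ (Fano case, where $\chi(K_Y+H)=\chi(\sO_Y)=1$ by numerical invariance of $\chi$ and vanishing) correctly disposes of the numerically trivial case, which the paper's basepoint-freeness argument (``find $S\in|m(K_X+2A)|$ with $A|_S$ big, hence $(K_X+A)|_S\cdot A|_S>0$'') silently assumes away. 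Also, your worry about $K_Y+H$ sitting on the boundary of the movable cone is harmless: $\Mov^1$ is closed by definition and contains the nef cone, so the hypothesis of Theorem~\ref{thm:BabyPositiveC2} is literally satisfied.

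The genuine gap is the reduction step. After you replace $Y$ by a terminalisation $\pi\colon X\to Y$, the divisor $\pi^*H$ is no longer very ample, only big, nef and semiample, and every member of $|\pi^*H|$ is a pullback $\pi^*D_Y$ with $D_Y\in|H|$. A canonical threefold may have a one-dimensional (cDV) singular locus $C$; a general $D_Y$ meets $C$ in finitely many points, and then $\pi^*D_Y$ necessarily contains the positive-dimensional fibres of $\pi$ over those points. Bertini gives no information about the singularities of $\pi^*D_Y$ or of the pair $(X,\pi^*D_Y)$ along such fibres (the morphism defined by $|\pi^*H|$ is $\pi$ followed by an embedding, so its critical locus contains exactly the exceptional set), so your assertion that ``by Bertini \ldots $(Y,D)$ has only isolated lc singularities'' is unjustified precisely where it matters; the same condition (snc in codimension two) is also needed for your residue-sequence computation of $c_2\bigl((\Omega\log D)^{**}\bigr)$. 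This can be repaired, but not by Bertini: one needs that a general $D_Y$ has only Du Val singularities (the general-elephant property of cDV points), that $(Y,D_Y)$ is lc by inversion of adjunction, and that the crepant pullback $\pi^*D_Y$ is normal (for instance via a conductor argument: a non-normal locus would create a divisor of discrepancy $\leq-1$ over the canonical surface $D_Y$). The paper sidesteps all of this: its reduction (log resolution of $(Y,H)$, relative log-MMP over $Y$, then terminalisation) only has to produce an \emph{lc} pair with reduced, nef, Cartier boundary, because the $c_2$-positivity used in Proposition~\ref{prop:KeyIneq} is Theorem~\ref{thm:PositiveC2} (nef polarisation, no isolated-singularity or smooth-in-codimension-two hypothesis) rather than Theorem~\ref{thm:BabyPositiveC2}. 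Two smaller repairs: after terminalisation $K_X+2\pi^*H$ is only big and nef, so Theorem~\ref{thm:BabyPositiveC2}, stated for ample $A$, must be applied by continuity on the nef cone; and your Riemann--Roch display should be stated as the inequality $\geq$ coming from Kawamata's bound on $\chi(\sO_X)$ for terminal $X$ (as in Proposition~\ref{prop:KeyIneq}), not as an equality, which is in any case all your argument needs.
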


We note that Theorem~\ref{thm:Main} is stated in \cite{HOR12} under the weaker assumption that $H$ is a 
nef and big Cartier divisor. The proof relies on an inequality similar to that of Theorem \ref{thm:BabyPositiveC2} but 
under the weaker assumption that the 
first Chern class is \emph{nef in codimension one}. It seems that there is a gap in the proof of that inequality, 
but the author kindly informs us that one can get rid of this assumption and use only the classical result of Miyaoka,
where $c_1$ is assumed to be nef (see the inequality (\ref{eq:MiyChern})).

\

A second application is given in section \ref{sect:LV} vis-\`a-vis Lang--Vojta's conjectures on subvarieties of
varieties of general type:

\vspace{5mm}

\begin{changemargin}{0.5cm}{0.5cm}
\emph{{\bf Geometric Lang--Vojta conjecture:} In a projective variety of general type $X$, subvarieties that are not 
of general type are contained in a proper algebraic subvariety of $X$.} 
\end{changemargin}

\vspace{5mm}

In particular, a variety of general type should have only finitely many codimension one subvarieties 
that are not of general type. We partially establish this conjecture in the setting of the following theorem.

\begin{theorem}\label{thm:LV1}
Let $X$ be a normal projective $\bQ$-factorial threefold such that $K_X\in \Mov^1(X)$. If $X$ is of general type then $X$ has only a finite number of movable codimension one, normal subvarieties $D$ 
verifying the following conditions. 

\begin{enumerate}
\item The subvariety $D$ has only canonical singularities.  
\item The anticanonical divisor $-K_D$ is pseudoeffective.
\item The pair $(X,D)$ has only isolated lc singularities. 
\end{enumerate}
In particular, there are only finitely many such Fano, Abelian and Calabi-Yau subvarieties.
\end{theorem}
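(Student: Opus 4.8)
The plan is to convert the two orbifold Chern-class inequalities of the introduction into numerical constraints on the class $[D]$, and then to combine them with the bigness of $K_X$ to bound the $A$-degree of $D$; a separate deformation-theoretic argument then rules out moving families and upgrades boundedness to finiteness.

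First I would record the basic Chern-class identity. From the residue sequence $0\to\Omega_X\to\Omega_X\log(D)\to\mathcal O_D\to0$ (on the big open set where $X$ is smooth, which suffices for the reflexive Chern classes used here) one gets $c_1\bigl((\Omega_X\log(D))^{**}\bigr)=K_X+D$ and
\[
c_2\bigl((\Omega_X\log(D))^{**}\bigr)\cdot A=c_2(X)\cdot A+(K_X+D)\cdot D\cdot A .
\]
Since $D$ is normal with canonical singularities, $X$ is smooth in codimension two, and $(X,D)$ has only isolated lc singularities, the different of adjunction has no divisorial support, so $K_D=(K_X+D)|_D$ exactly, whence $(K_X+D)\cdot D\cdot A=K_D\cdot(A|_D)\le 0$ because $-K_D$ is pseudoeffective and $A|_D$ is ample. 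Because $K_X\in\Mov^1(X)_{\bQ}$ and $D$ is movable, $K_X+D\in\Mov^1(X)_{\bQ}$, so both Theorem~\ref{thm:BabyPositiveC2} and the Miyaoka--Yau type inequality $c_1^2\cdot A\le 3c_2\cdot A$ of the introduction apply to $(X,D)$. Feeding the identity and the sign above into the latter yields
\[
(K_X+D)^2\cdot A\le 3\bigl(c_2(X)\cdot A+(K_X+D)\cdot D\cdot A\bigr)\le 3\,c_2(X)\cdot A ,
\]
and subtracting $(K_X+D)\cdot D\cdot A\le0$ eliminates $D^2\cdot A$ and leaves $K_X\cdot D\cdot A\le 3\,c_2(X)\cdot A-K_X^2\cdot A=:C_1$, a constant independent of $D$.

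Next I would use that $X$ is of general type to turn this into a degree bound. Bigness of $K_X$ gives $\varepsilon>0$ with $K_X-\varepsilon A$ pseudoeffective. For a movable divisor $D$ and ample $A$ the curve class $D\cdot A$ is movable (its intersection with any effective divisor is $\ge0$), so by pseudoeffective/movable duality $(K_X-\varepsilon A)\cdot(D\cdot A)\ge0$, i.e. $K_X\cdot D\cdot A\ge\varepsilon\,(D\cdot A^2)$. Combined with the previous bound this gives $D\cdot A^2\le C_1/\varepsilon$ uniformly. Taking $A$ very ample, $[D]$ is a lattice point of the pseudoeffective cone of bounded $A$-degree; since $(\,\cdot\,)\cdot A^2$ is strictly positive on that cone minus the origin (as $A^2$ lies in the interior of the dual movable cone of curves), only finitely many classes occur and the subvarieties $D$ lie in a bounded family, i.e. a finite-type subscheme $\mathcal H$ of a Hilbert or Chow scheme.

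Finally I would exclude moving members, which I expect to be the main obstacle. Suppose some $D$ in our set moves nontrivially; then it lies in a positive-dimensional family $\mathcal F$ of distinct prime divisors, which must cover $X$. Moving implies the normal bundle $D|_D$ is pseudoeffective, so with $-K_D$ pseudoeffective adjunction gives $-K_X|_D=(-K_D)+D|_D$ pseudoeffective, i.e. $K_X|_D$ is anti-pseudoeffective. On the other hand, writing $K_X\sim_{\bQ}A'+E$ with $A'$ ample and $E\ge0$, a general member $D'$ of $\mathcal F$ avoids $\supp E$, so $K_X|_{D'}$ is big, hence pseudoeffective; as pseudo-effectivity is preserved under specialization within the family, $K_X|_D$ is pseudoeffective too. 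Then $K_X|_D$ and $-K_X|_D$ are both pseudoeffective, forcing $K_X|_D\equiv0$ and $K_X\cdot D\cdot A=0$, which contradicts $K_X\cdot D\cdot A\ge\varepsilon\,(D\cdot A^2)>0$. Hence no member of our set moves; each is an isolated point of $\mathcal H$, and a finite-type scheme has only finitely many isolated points, giving finiteness. The last assertion is immediate: Fano surfaces have $-K_D$ ample and Abelian and Calabi--Yau surfaces have $K_D\equiv0$, so in all cases $-K_D$ is pseudoeffective with the required mild singularities, and these subvarieties form a subset of the finite set above. The delicate step is the final one: ensuring that a genuinely moving member of our set retains enough positivity, via specialization of pseudo-effectivity and the numerical invariance of $K_X\cdot D\cdot A$, for the adjunction obstruction to apply even when $D$ is a special member of its covering family.
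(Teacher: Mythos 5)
Your derivation of the key numerical bound $K_X\cdot D\cdot A\le \bigl(3c_2(X)-K_X^2\bigr)\cdot A$ is essentially the paper's own first step: both proofs expand the Miyaoka--Yau inequality for $(\Omega_X\log D)^{**}$ using $c_2\bigl((\Omega_X\log D)^{**}\bigr)=c_2(X)+(K_X+D)\cdot D$ together with adjunction $K_D=(K_X+D)|_D$ and $K_D\cdot A|_D\le 0$, and your use of $K_X-\varepsilon A$ pseudoeffective paired against the movable curve class $D\cdot A$ is the same mechanism as the paper's decomposition $mK_X\sim H+E$. Where you genuinely diverge is the passage from the degree bound to finiteness. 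The paper bounds the polarized family $(D,H|_D)$ via Koll\'ar--Matsusaka (which is why its corollary adds the rational-singularities hypothesis) and then kills an infinite, hence moving, family by the easy addition law for Kodaira dimension: a covering family of surfaces with $\kappa\le 0$ would force $\kappa(X)\le 1$, contradicting general type. You instead prove each $D$ is rigid: if $[D]$ were a non-isolated point of the finite-type Chow scheme of codimension-one cycles of bounded $A$-degree, nearby members make $D|_D$ pseudoeffective, so $-K_X|_D=-K_D+D|_D$ is pseudoeffective and $K_X\cdot D\cdot A\le 0$, contradicting $K_X\cdot D\cdot A\ge\varepsilon\, D\cdot A^2>0$. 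Note that this contradiction is already complete: your entire specialization-of-pseudoeffectivity discussion (bigness of $K_X|_{D'}$ on general members, semicontinuity, etc.), which you flag as the delicate step, is in fact superfluous. Two small repairs are needed: nearby points of the Chow scheme parametrize cycles $Z_t$ that need not be prime divisors, but since $Z_t\equiv D$ no $Z_t$ can contain $D$ as a component, so $Z_t|_D$ is still effective and the normal-bundle argument survives; and your ``finitely many lattice classes'' step is both shaky (discreteness of numerical classes of Weil divisors on a $\bQ$-factorial threefold requires an argument) and unnecessary, since finite-typeness of the Chow scheme of bounded-degree cycles gives boundedness directly. On balance your route buys independence from Koll\'ar--Matsusaka and from the rational-singularities assumption, and it replaces the paper's one-line finiteness sketch (which implicitly needs deformation invariance of Kodaira dimension along the family) by a self-contained rigidity argument; the paper's route, in exchange, produces boundedness of the polarized surfaces $(D,H|_D)$ as intermediate information of independent interest.
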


Here, by a variety of general type, we mean a normal variety whose resolution has a big canonical bundle.

We remark that---in the smooth setting---a stronger version of Theorems~\ref{thm:LV1}  
and~\ref{thm:BabyPositiveC2} has been
claimed in~\cite{LuMi}, where the authors establish these results under the weaker assumption 
that $(K_X+D)$ is pseudoeffective. Unfortunately the arguments in~\cite{LuMi}
are not complete. We refer to Remark~\ref{rem:LMProblems} for a detailed discussion of these
problems.

\subsection{Acknowledgements}
The authors would like to thank S\'ebastien Boucksom, Junyan Cao, Paolo Cascini, Andreas H\"oring, Steven Lu, Mihai P\u{a}un for fruitful discussions, and the anonymous referee for  all the remarks that greatly improved the reading of this article.

\section{Basic definitions and background}

\subsection{Movable cone}
We introduce the \emph{movable} cone of divisors; one of the important cones of divisors that 
is ubiquitous in birational geometry.

Let $X$ be a normal projective variety and $D$ a $\bQ$-divisor on $X$. The stable base locus of $D$ is defined by
$$\bB(D):=\bigcap_m \Bs(|mD|).$$
The restricted base locus is given by
$$\bB_{-}(D)=\bigcup_{A \text{ ample}} \bB(D+A).$$

\begin{definition}[Movable cone of divisors]
\label{def:Move}
Let $\N^1(X)_{\bQ}$ be the space of numerical classes of divisors over $\bQ$; the Neron--Severi space.
The movable cone $\overline \Mov^1(X) \subset \N^1(X)_{\bQ}$ is the closure of the convex cone
$\Mov^1(X)$ generated by the classes of all effective divisors $D$ such that $\bB_{-}(D)$ has no divisorial components.
\end{definition}

The following inclusions now follow from the definitions. 

$$
\underbrace{\Amp(X)}_{\text{ample cone}} \;\;\;  \subset \;\;\;  \underbrace{\mathrm{Nef}(X)}_{\text{nef cone}} \;\;\; 
\subseteq \;\;\ \overline \Mov^1(X) \;\;\;  \subseteq \underbrace{\overline \Eff (X) }_{\text{pseudoeffective cone}} 
\subset \N^1(X)_{\bQ}.
$$

The following proposition gives a more geometric picture of Definition~\ref{def:Move}.

\begin{proposition}[\cite{Boucksom04}, Proposition 2.3]\label{prop:Fujita}
Given any $\alpha$ in the interior of $\Mov^1(X)$, there is a birational map $\phi: Y \to X$ 
and an ample divisor $A$ on $Y$ such that $[\phi_*A]=\alpha.$
\end{proposition}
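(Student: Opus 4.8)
The plan is to identify the realizable classes $\phi_* A$ with Boucksom's \emph{modified ample} classes and to show that these fill out the entire interior of $\Mov^1(X)$. Set
\[
\mathcal{MA}(X) := \bigl\{\, \mu_* \omega \ :\ \mu\colon Y \to X \text{ birational}, \ \omega \in \Amp(Y) \,\bigr\} \subset N^1(X),
\]
the set of pushforwards of ample classes under modifications. I would prove the proposition by establishing the two inclusions $\mathcal{MA}(X) \subseteq \operatorname{int}\Mov^1(X)$ and $\operatorname{int}\Mov^1(X) \subseteq \mathcal{MA}(X)$; the proposition is precisely the latter.

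First I would record the easy inclusion and the reductions. Since $\Mov^1(X)$ lies in the pseudoeffective cone, its interior consists of big classes. For $\mu$ birational and $\omega$ ample on $Y$, the system $|m\omega|$ is basepoint free for $m \gg 0$, so $\bB_-(\omega)$ is empty; pushing forward, $\bB_-(\mu_*\omega)$ carries no divisorial component, whence $\mu_*\omega \in \Mov^1(X)$, and the openness argument below upgrades this to membership in the interior.

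The main step is density via Fujita approximation together with the divisorial Zariski decomposition. Let $\alpha \in \operatorname{int}\Mov^1(X)$, so $\alpha$ is big. Fujita approximation produces modifications $\mu_\varepsilon \colon Y_\varepsilon \to X$ and decompositions $\mu_\varepsilon^*\alpha = A_\varepsilon + E_\varepsilon$ with $A_\varepsilon$ ample and $E_\varepsilon$ effective. Because $\alpha$ is movable, the negative part of its divisorial Zariski decomposition vanishes, $N(\alpha)=0$, and this forces $\mu_{\varepsilon *}E_\varepsilon \to 0$; hence $\mu_{\varepsilon*}A_\varepsilon = \alpha - \mu_{\varepsilon*}E_\varepsilon \to \alpha$, placing $\alpha$ in $\overline{\mathcal{MA}(X)}$.

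The final step, and the genuine obstacle, is to pass from the closure to an \emph{exact} realization, where the interiority of $\alpha$ is essential. I would fix an ample class $A_0$ on $X$ and $t>0$ with $\alpha' := \alpha - tA_0 \in \Mov^1(X)$; approximating $\alpha'$ as above yields one modification $\mu\colon Y \to X$, an ample class $\omega$ on $Y$, and an error $r := \alpha' - \mu_*\omega$ of arbitrarily small norm. The projection formula then gives
\[
\alpha = \mu_*\omega + tA_0 + r = \mu_*\bigl(\omega + t\,\mu^*A_0 + \mu^* r\bigr),
\]
so it suffices to make $A := \omega + t\mu^*A_0 + \mu^* r$ ample on $Y$. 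As $\mu^*A_0$ is nef, $\omega + t\mu^*A_0$ is ample and retains an ample ball of radius at least that of $\omega$, and choosing the approximation fine enough that $\|\mu^* r\|$ is below this radius keeps $A$ ample; then $\phi := \mu$ and $A$ finish the argument. The delicate point I anticipate is controlling this perturbation uniformly: both the radius of the ample ball around $\omega$ and the operator norm of $\mu^*$ depend on the model $Y_\varepsilon$, which itself varies with $\varepsilon$. One must therefore either fix the model before shrinking $r$, or, following Boucksom, argue intrinsically through the divisorial Zariski decomposition—showing directly that $\mathcal{MA}(X)$ is an open cone and that every big class with $N(\alpha)=0$ belongs to it. This openness of $\mathcal{MA}(X)$, combined with the density of the main step, is exactly what forces $\operatorname{int}\Mov^1(X) = \mathcal{MA}(X)$.
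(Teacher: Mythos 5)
There is a genuine gap, and it sits exactly where you anticipate trouble. First, for context: the paper gives no proof of this statement at all --- it is quoted from Boucksom, whose own argument is analytic (Kähler currents with vanishing divisorial Lelong numbers, Demailly regularization, and a resolution making the pulled-back current a Kähler form plus an exceptional divisor). Your algebraic route via Fujita approximation is therefore genuinely different, and its density step is sound: for $\alpha'$ big and in $\Mov^1(X)$ one has $N_{\sigma}(\alpha')=0$, and the pushforwards $\mu_{\varepsilon*}A_{\varepsilon}$ of the ample parts converge to $\alpha'-N_{\sigma}(\alpha')=\alpha'$. But the exact-realization step as written is circular, as you note: both the radius of the ample ball around $\omega+t\mu^{*}A_{0}$ and the operator norm of $\mu^{*}$ depend on the model $Y$, which was chosen only after you demanded $\|r\|$ small. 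Worse, the first escape route you offer --- ``fix the model before shrinking $r$'' --- cannot work: on a \emph{fixed} model $Y$ the infimum of $\|\mu_{*}E\|$ over decompositions $\mu^{*}\alpha'=A+E$ with $A$ ample and $E$ effective is in general a fixed positive quantity (take $Y=X$ and $\alpha'$ movable but not ample), so the error cannot be shrunk without changing $Y$. Your second route (proving directly that the cone $\mathcal{MA}(X)$ is open) is merely asserted, not carried out, so as it stands the proof is incomplete.

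The repair, however, is simple and needs no norm estimates on $Y$ at all: absorb the error into the ample class \emph{on $X$} rather than perturbing $\omega$ on $Y$. Since $r=\alpha'-\mu_{*}\omega$ is measured in the fixed space $\N^1(X)$, you may choose the approximation fine enough that $tA_{0}+r$ is ample on $X$; then $\mu^{*}(tA_{0}+r)$ is nef on $Y$, so
\begin{equation*}
A:=\omega+\mu^{*}(tA_{0}+r)
\end{equation*}
is ample (ample plus nef), and the projection formula gives $\mu_{*}A=\mu_{*}\omega+tA_{0}+r=\alpha$. With this regrouping your argument closes, modulo the quoted facts about $\sigma$-decompositions and Fujita approximation (which are themselves results of Boucksom and Nakayama, so some care is needed if one wants a proof independent of the very paper being cited). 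Two minor remarks: the inclusion $\mathcal{MA}(X)\subseteq\operatorname{int}\Mov^{1}(X)$ and the openness of $\mathcal{MA}(X)$ are never needed for the stated proposition, only the reverse inclusion is; and throughout you implicitly need $\mu_{*}\omega$ to define a class in $\N^1(X)$, i.e.\ $X$ smooth or $\bQ$-factorial, which matches the setting in which the paper invokes the result but should be said.
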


\subsection{Stability with respect to movable $1$-cycles}
Now we introduce the notion of movable curves with respect to which a slope stability theory for sheaves
can be formulated. 
\begin{definition}\label{def:MOVE}
A class $\gamma \in \N_1(X)$ is movable if there is a projective birational morphism 
$\pi: \wtilde X \to X$ and a set of ample classes $H_1, \ldots, H_{n-1}$ in $\N^1(\wtilde X)_{\bQ}$
such that $\gamma$ is equal to the class of  $\pi_* ( H_1 \cdot \ldots \cdot H_{n-1})$.
We define $\Mov_1(X)$ to be 
the convex cone generated by such $1$-cycles and denote its closure in $\N_1(X)_{\bQ}$ by $\overline\Mov_1(X)$.
\end{definition}

Movable classes form a natural setting for the notion of stability of 
coherent sheaves (see \cite{CP11} and \cite{GKP15}). We shall now recall the basic definitions and 
properties.

\begin{notation}[determinant sheaves]
Throughout this paper by $\det(\sE)$ we mean the reflexive hull of
the determinant sheaf of $\sE$.
\end{notation}

\begin{notation}\label{not:weilclass} Let $X$ be a normal projective variety and $\sF$ a coherent sheaf 
on $X$ of rank $r$. Let $D$ be a Weil divisor in $X$ such that $\det(\sF)\cong \sO_X(D)$. 
When $D$ is $\bQ$-Cartier, we set $[\sF]$ to denote the numerical class $[D]\in \N^1(X)_{\bQ}$ of $D$.
\end{notation}

\begin{definition}
Assume that $X$ is normal, $\bQ$-factorial and projective, let $\gamma \in \Mov_1(X)$. The slope of a coherent sheaf $\sE$ of rank $r$
with respect to $\gamma$ is defined by
$$\mu_\gamma(\sE):=\frac{1}{r} \cdot  [\sE] \cdot \gamma \;\; \in \bQ.$$

\end{definition}

\begin{definition}\label{def:SSS}
We say that a torsion free sheaf $\sE$ is semistable with respect to $\gamma$, if $\mu_\gamma(\mathcal{F}) \leq \mu_\gamma(\sE)$ 
for any coherent subsheaf $0 \subsetneq \mathcal{F} \subset \sE$.
\end{definition}

\begin{proposition}[\cite{GKP15}, Corollary 2.27]
\label{GKPprop}
Let $X$ be a normal, $\bQ$-factorial, projective variety, $\gamma \in \Mov_1(X)$ and $\sE$ a torsion free sheaf. 
There exists a unique Harder--Narasimhan (or HN, for short) filtration $(\sE, F^{\HN})$, i.e. a 
filtration $0=\sE_0 \subsetneq \sE_1 \subsetneq \dots \subsetneq \sE_r=\sE$, where 
each quotient $\mathcal{Q}_i:=\sE_i/\sE_{i-1}$ is 
torsion-free, $\gamma$-semistable, and where the sequence of slopes $\mu_\gamma(\mathcal{Q}_i)$ is strictly decreasing.
\end{proposition}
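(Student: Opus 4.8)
The statement is the Harder--Narasimhan formalism adapted to a movable curve class $\gamma$, so the plan is to run the classical argument (in the style of Huybrechts--Lehn), the only genuinely new ingredient being that $\gamma$ is movable rather than a power of an ample divisor. First I would fix the $\gamma$-degree $\deg_\gamma(\mathcal{F}):=c_1(\mathcal{F})\cdot\gamma$ of a torsion-free sheaf $\mathcal{F}$, where $c_1(\mathcal{F})$ is read off from the reflexive determinant $\det\mathcal{F}=(\Lambda^{\rank\mathcal{F}}\mathcal{F})^{**}$; the $\bQ$-factoriality hypothesis guarantees that $\det\mathcal{F}$ is $\bQ$-Cartier, so the intersection number makes sense. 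Since $\gamma$ is a curve class it only detects the codimension-one part of $\mathcal{F}$, whence $\deg_\gamma$ is additive on short exact sequences of torsion-free sheaves, and $\mu_\gamma=\deg_\gamma/\rank$ obeys the seesaw identity $r_{\mathcal{E}}\,\mu_\gamma(\mathcal{E})=r_{\mathcal{F}}\,\mu_\gamma(\mathcal{F})+r_{\mathcal{G}}\,\mu_\gamma(\mathcal{G})$ for $0\to\mathcal{F}\to\mathcal{E}\to\mathcal{G}\to 0$.

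The crucial point, and the step I expect to be the main obstacle, is to show that $\mu_{\max}(\mathcal{E}):=\sup\{\mu_\gamma(\mathcal{F}):0\neq\mathcal{F}\subseteq\mathcal{E}\}$ is finite and actually attained. For an ample polarization this is Grothendieck's lemma combined with the discreteness of the set of degrees; here $\gamma$ may be irrational, so the slopes of subsheaves need not form a discrete set and one cannot pass directly from a supremum to a maximum. I would circumvent this with a boundedness statement: for a fixed rank, the saturated subsheaves $\mathcal{F}\subseteq\mathcal{E}$ whose slope exceeds a given bound form a bounded family, so their determinant classes $c_1(\mathcal{F})$ lie in a finite subset of $N^1(X)$; consequently $\mu_\gamma$ assumes only finitely many values above any threshold and the supremum is attained. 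Establishing this boundedness for a movable, non-ample class $\gamma$ is where the real work lies, and I would reduce it to the classical ample case, for instance by approximating $\gamma$ by classes of the form $\pi_*(\tilde A_1\cdots\tilde A_{n-1})$ for a modification $\pi$ and ample divisors $\tilde A_i$ on the source (the movable-curve counterpart of Proposition~\ref{prop:Fujita}), so that the classical boundedness results apply.

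Granting the existence of $\mu_{\max}(\mathcal{E})$, I would construct the maximal destabilizing subsheaf. Any subsheaf attaining $\mu_{\max}(\mathcal{E})$ is automatically $\gamma$-semistable, since each of its subsheaves is also a subsheaf of $\mathcal{E}$ and hence has slope at most $\mu_{\max}(\mathcal{E})$. If $\mathcal{F}$ and $\mathcal{F}'$ both attain $\mu_{\max}(\mathcal{E})$, then $\mathcal{F}\oplus\mathcal{F}'$ is semistable of that slope and $\mathcal{F}+\mathcal{F}'$ is a quotient of it, forcing $\mu_\gamma(\mathcal{F}+\mathcal{F}')=\mu_{\max}(\mathcal{E})$ as well; therefore there is a unique largest subsheaf $\mathcal{E}_1$ of slope $\mu_{\max}(\mathcal{E})$, which is saturated and $\gamma$-semistable. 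I would then pass to $\mathcal{E}/\mathcal{E}_1$, a torsion-free sheaf of strictly smaller rank with $\mu_{\max}(\mathcal{E}/\mathcal{E}_1)<\mu_\gamma(\mathcal{E}_1)$ (a subsheaf of $\mathcal{E}/\mathcal{E}_1$ of slope $\ge\mu_\gamma(\mathcal{E}_1)$ would pull back to a subsheaf of $\mathcal{E}$ strictly containing $\mathcal{E}_1$ and contradicting its maximality), and finish by induction on the rank: the filtration of $\mathcal{E}/\mathcal{E}_1$ pulls back to a filtration $0=\mathcal{E}_0\subsetneq\mathcal{E}_1\subsetneq\dots\subsetneq\mathcal{E}_r=\mathcal{E}$ whose graded pieces are $\gamma$-semistable with strictly decreasing slopes. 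Finally, uniqueness follows from the standard argument: the first step of any such filtration must coincide with the maximal destabilizing subsheaf $\mathcal{E}_1$, and one then induces on $\mathcal{E}/\mathcal{E}_1$.
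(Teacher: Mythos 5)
Your proposal cannot be compared against a proof in the paper, because the paper gives none: this proposition is imported verbatim from \cite{GKP15}, Corollary 2.27, and used as a black box. Judged on its own merits, your skeleton is the right one --- finiteness and attainment of $\mu_{\max}$, construction of the maximal destabilizing subsheaf, induction on the rank, standard uniqueness --- and you correctly isolate the attainment of the supremum as the crux. But the way you propose to settle that crux has a genuine gap.

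Your key claim --- that saturated subsheaves $\mathcal{F}\subseteq\mathcal{E}$ with $\mu_\gamma(\mathcal{F})$ above a fixed threshold form a bounded family, so that their determinant classes lie in a finite subset of $\N^1(X)$ --- is false for a general $\gamma\in\Mov_1(X)$, and the proposition is stated for the \emph{closed} cone, so boundary classes must be handled. Concretely: let $X$ be a surface fibred over $\bP^1$ with fibre class $F$, let $\gamma=[F]$ (nef, hence movable, but $\gamma^2=0$, so a boundary class), and let $\mathcal{E}=\sO_X^{\oplus 2}$. For every $k\geq 1$, two distinct fibres $F_1,F_2$ give sections $s_1,s_2\in H^0(X,\sO_X(kF))$ with disjoint zero divisors $kF_1,kF_2$, hence a subbundle (in particular saturated) inclusion $\sO_X(-kF)\hookrightarrow\sO_X^{\oplus 2}$. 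All these subsheaves have $\mu_\gamma=0=\mu_{\max}$, their determinant classes $-k[F]$ are pairwise distinct in $\N^1(X)$, and the family $\{\sO_X(-kF)\}_k$ is unbounded. So both the boundedness and the finiteness you rely on fail (here the HN filtration exists trivially, so it is your method, not the statement, that breaks). Your fallback --- approximate $\gamma$ by classes $\pi_*(\wtilde A_1\cdots\wtilde A_{n-1})$ and invoke classical boundedness --- is not a proof either: semistability and maximal destabilizing subsheaves do not pass to limits of polarizations without an argument, since a destabilizer for the limit class need not destabilize for the approximating classes and conversely. Making precisely this limiting step rigorous (e.g.\ showing that the HN filtration with respect to $\gamma+\epsilon\alpha$, for $\alpha$ a fixed interior class, is independent of $\epsilon$ for $0<\epsilon\ll 1$) is essentially the real content of \cite{GKP15}.

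A related symptom: nowhere do you invoke the defining property of a movable class, namely $\gamma\cdot D\geq 0$ for every effective divisor $D$, yet this is what makes the whole theory run. It yields the uniform bound $\mu_{\max}(\mathcal{E})<\infty$ in one line: embed $\mathcal{E}\hookrightarrow\sO_X(mA)^{\oplus N}$ (dualize a globally generated presentation of $\mathcal{E}^*$); then for any saturated rank-$p$ subsheaf $\mathcal{F}$ the class $pm[A]-c_1(\mathcal{F})$ is effective, whence $\mu_\gamma(\mathcal{F})\leq mA\cdot\gamma$. Movability is also needed for steps you use silently, e.g.\ that passing to the saturation does not decrease the $\gamma$-slope (the torsion of the quotient contributes an effective divisor class), and your ``finitely many classes in $\N^1(X)$'' additionally presupposes that numerical classes of integral Weil divisors form a discrete (finitely generated) subgroup --- true, but it deserves mention. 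Repaired along these lines your argument does work for $\gamma$ in the interior of $\Mov_1(X)$; the boundary case is where it genuinely falls short of the cited result.
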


\begin{remark}
We note that, by definition, the intersection of $\gamma\in \Mov_1(X)$ with any effective divisor 
is strictly positive. 
Therefore, to have a reasonable notion of stability, one works with elements of $\Mov_1(X)$ instead of those of 
its closure. 
\end{remark}

\begin{notation}[Polarization]
Given a projective normal variety $X$, let $D_1, \ldots, D_k \in \N^1(X)_{\bQ}$.
By $(D_1, \ldots, D_k)$ we denote the element of $\rm H^{2k}(X)$ defined 
by $D_1\cdot \ldots \cdot D_k$. 
\end{notation}

\subsection{Chern classes for singular spaces}
For any $i\in \bN$, let $X$ be a quasi-projective variety 
that is smooth in codimension $i$. 
For every coherent sheaf $\sF$ on $X$, by using a finite projective resolution
of $\sF|_{X_{\reg}}$, we can define the i-th Chern class $c_i(\sF|_{X_{\reg}})$
as an element of the Chow ring $\rm{A}^i(X_{\reg})$,
cf.~\cite{Ful98}. On the other hand, with $Z:= X\backslash X_{\reg}$, 
there is a natural exact sequence of 
Abelian groups 

$$
0 \longrightarrow  \rm A^i(Z)  \longrightarrow \rm A^i(X) \longrightarrow \rm A^i(X_{\reg}) \longrightarrow 0.
$$
Therefore, if $\codim_X(Z)>i$, then we have $\rm A^i(X)\cong \rm A^i(X_{\reg})$.
In particular, for normal varieties, $c_1(\sF)$ can be defined as an element of  $\rm A^1(X)$ 
as the class of the (unique) extension of $c_1(\sF|_{\reg})$. 
Similarly, if $X$ is smooth in codimension two, we can define $c_2(\sF) \in \rm A^2(X)$.
Consequently, assuming that $X$ is projective, $c_2(\sF)$ induces 
a multilinear form on 
$$\underbrace{\N^1(X)_{\bQ}\times \ldots \times \N^1(X)_{\bQ}}_{\text{$(n-2)$-times}},$$ 
where $n=\dim X$.

\begin{remark}[Non-$\bQ$-factorial case]
\label{QQ}
If for every torsion free subsheaf $\sF\subseteq \sE$, the class of $\gamma\in \Mov^1(X)$
has a representative by a smooth curve $C\subset X_{\reg}$
such that $\sF|_C$ is locally free, then the $\bQ$-factoriality assumption 
in Definition~\ref{def:SSS} is redundant. In this case we can define $\gamma$-slope of 
$\sF$ by 

$$
\frac{1}{r} c_1 (\sF)\cdot [C] = \frac{1}{r} \cdot \deg (\sF|_C) ,
$$ 
where $[C]\in A^{n-1}(X)$.
One can relax the above---rather stringent---assumptions on 
$C$ but that would be unnecessary for our purposes in the current article.
We note that the $\bQ$-factoriality assumption in Proposition~\ref{GKPprop} is redundant 
if $\gamma$ is a $1$-cycle class of this form.
\end{remark}

\medskip

\subsection{$\bQ$-twisted sheaves}

It will be quite useful in the sequel to work in the more general setting of $\bQ$-twisted sheaves as introduced in
\cite{Miyaoka87}.

\begin{definition}[$\bQ$-twisted sheaves]\label{def:Qtwists}
A $\bQ$-twisted sheaf is a pair $\sE \langle B\rangle$, where $\sE$ is a coherent sheaf 
and $B$ is a $\bQ$-Cartier divisor. 
\end{definition}

We now recall the usual formulas for Chern classes of $\bQ$-twisted locally free sheaves.

\begin{definition}
 For a $\bQ$-twisted locally-free sheaf $\sE \langle B\rangle$ of rank $r$ on a normal 
 quasi-projective variety we have
$$c_1(\sE \langle B\rangle):=c_1(\sE)+rc_1(B),$$
$$c_2(\sE \langle B\rangle):=c_2(\sE)+(r-1)c_1(\sE)\cdot c_1(B)+\frac{r(r-1)}{2}c_1(B)^2.$$
\end{definition}

\begin{notation}
In the setting of Notation~\ref{not:weilclass},
for any $\bQ$-Cartier divisor $A$, we set
$[\sF\langle A \rangle]= [\sF]+r\cdot [A]$.
\end{notation}

For $\bQ$-factorial normal projective varieties we can define a notion of slope
stability for $\bQ$-twisted sheaves with respect to $\gamma\in \Mov^1(X)$ in the natural way.
Moreover, from 
the definition it follows that 
$\sE$ is $\gamma$-semistable (or stable) if and only if $\sE\langle B \rangle$ is 
$\gamma$-semistable (resp. stable) as $\bQ$-twisted sheaf (see also~\cite[Rem.~6.4.8]{Laz04-II}). 
In particular
the following inequality follows from the well-known Bogomolov--Gieseker inequality
for smooth projective surfaces~\cite{Bog79}.

\begin{proposition}[Bogomolov--Gieseker inequality for semistable $\bQ$-twisted 
sheaves]\label{prop:QBog} 
Take $S$ to 
be a smooth projective surface. Let $\sE\langle B \rangle$ be 
a $\bQ$-twisted locally-free sheaf on $S$ of rank $r$ and $A\in \Amp(X)_{\bQ}$. If $\sE\langle B\rangle$ 
is semistable with respect to $A$, then $\sE\langle B\rangle$ verifies Bogomolov--Gieseker
inequality
\begin{equation}\label{eq:BogIneqSurface}
2r\cdot c_2(\sE\langle B \rangle) - (r-1) \cdot c_1^2(\sE\langle B \rangle)\geq 0.
\end{equation}

\end{proposition}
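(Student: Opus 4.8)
The plan is to reduce the statement to the classical Bogomolov–Gieseker inequality for ordinary slope-semistable locally-free sheaves on a surface, by exploiting two elementary features of the $\bQ$-twist: it shifts every slope by one and the same constant, and it leaves the discriminant $2r\,c_2-(r-1)c_1^2$ unchanged.

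First I would record the behaviour of slopes under twisting. Every $\bQ$-twisted subsheaf of $\sE\langle B\rangle$ is of the form $\sF\langle B\rangle$ for a coherent subsheaf $\sF\subset\sE$ of some rank $s$, and by Notation~\ref{not:weilclass} one has $[\sF\langle B\rangle]=[\sF]+s\cdot[B]$. Since $S$ is a surface the slope is computed against $A^{n-1}=A$, so
\[
\mu_A(\sF\langle B\rangle)=\mu_A(\sF)+[B]\cdot A .
\]
Thus twisting by $B$ adds the fixed quantity $[B]\cdot A$ to the slope of every subsheaf, and consequently $\sE\langle B\rangle$ is semistable with respect to $A$ if and only if the locally-free sheaf $\sE$ itself is semistable with respect to $A$.

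Next I would check that the discriminant $\Delta:=2r\,c_2-(r-1)c_1^2$ is invariant under the twist. This is a direct computation: substituting the Chern-class formulas for $\bQ$-twisted sheaves recalled above into $2r\,c_2(\sE\langle B\rangle)-(r-1)c_1^2(\sE\langle B\rangle)$, all terms carrying a factor $c_1(\sE)\cdot c_1(B)$ or $c_1(B)^2$ cancel in pairs, leaving exactly $2r\,c_2(\sE)-(r-1)c_1^2(\sE)$. Hence $\Delta(\sE\langle B\rangle)=\Delta(\sE)$ as numbers on $S$.

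Finally, after clearing denominators so that $A$ may be taken to be an integral ample divisor (which alters neither the semistability hypothesis nor the purely numerical conclusion on the surface), the classical Bogomolov–Gieseker inequality applied to the slope-semistable locally-free sheaf $\sE$ gives $\Delta(\sE)\geq 0$, and combining this with the two previous steps yields $\Delta(\sE\langle B\rangle)\geq 0$, which is precisely \eqref{eq:BogIneqSurface}. The substance of the argument is thus entirely the reduction to the untwisted case; the only point demanding genuine care is the observation in the first step that the $\bQ$-twisted subsheaves of $\sE\langle B\rangle$ are exactly the twists $\sF\langle B\rangle$ of ordinary subsheaves $\sF\subset\sE$, since this is what makes the equivalence of semistability hold and lets us invoke the classical inequality verbatim.
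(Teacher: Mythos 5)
Your proof is correct, but it takes a genuinely different route from the paper's. The paper's proof is a covering argument: it passes to a finite Galois cover $h\colon T\to S$ adapted to $B$, so that the twist becomes an honest locally free sheaf $\sE_T:=h^*\sE\otimes\sO_T(h^*B)$; semistability of $\sE_T$ with respect to $h^*A$ is deduced from the uniqueness (hence Galois-invariance) of its maximal destabilizing subsheaf, and the inequality then follows from the classical Bogomolov--Gieseker inequality on $T$, pushed back down to $S$ by finiteness of $h$. You instead stay on $S$ and exploit precisely the two features that make the twist harmless: the $A$-slope of every subsheaf is shifted by the same constant $[B]\cdot A$, so $\sE\langle B\rangle$ is $A$-semistable if and only if $\sE$ is, and the discriminant $2r\,c_2-(r-1)c_1^2$ is invariant under twisting (your cancellation computation is correct), so the classical inequality for $\sE$ yields the twisted one verbatim. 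Your argument is more elementary --- no adapted cover, no descent --- and makes transparent that both hypothesis and conclusion are blind to $B$; the paper's covering trick is the template used throughout the rest of the paper (e.g.\ for orbifold cotangent sheaves, where the sheaf on the cover is \emph{not} merely a pullback tensored by a line bundle), which is presumably why the authors argue this proposition the same way. The one point of convention your argument rests on --- and which you rightly flag --- is that semistability of $\sE\langle B\rangle$ is tested against subsheaves of the form $\sF\langle B\rangle$ with $\sF\subset\sE$; the paper never defines $\bQ$-twisted semistability explicitly, but this reading is the standard one and is consistent with how the paper later forms Harder--Narasimhan filtrations of $\bQ$-twisted sheaves, so your reduction is legitimate.
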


\begin{definition}[Semipositive sheaves]
\label{def:SP}
Let $X$ be a normal, $\bQ$-factorial, projective variety and $\gamma \in \Mov_1(X)$.
A torsion-free sheaf $\sE$ is said to be semipositive with respect 
to $\gamma$, if for every torsion-free, quotient sheaf $\sF$ of
$\sE$, we have 
$[\sF] \cdot \gamma \geq 0$.

\end{definition}

\begin{remark}
Similar to the case of stability, 
if for every torsion free quotient $\sF$ of $\sE$, the class of
$\gamma$ has a representative by a smooth projective curve $C$
as in Remark~\ref{QQ}, then
the $\bQ$-factoriality assumption in Definition~\ref{def:SP} is not necessary. In
this case the semipositivity assumption is given by

$$ 
c_1(\sF) \cdot [C] = \deg(\sF|_C) \geq 0.
$$
For a $\bQ$-factorial projective variety $X$ the two definitions coincide. 
\end{remark}

The semipositivity property for sheaves also naturally extends to the setting of $\bQ$-twisted
sheaves. We say $\sE\langle B \rangle$ is semipositive with respect to $\gamma$,
if $[\sF\langle B \rangle] \cdot \gamma \geq 0$, for all torsion free quotient sheaves $\sF$.

\medskip

\subsection{Orbifold basics}
Following the terminology of Campana \cite{Ca04}, an \emph{orbifold} is simply a pair $(X,D)$, consisting of a normal 
quasi-projective variety and a boundary divisor 
$D=\sum d_i\cdot D_i$, where $d_i= (1-b_i/a_i)\in [0,1]\cap \bQ$.
We follow the usual convention that when $d_i=1$ we have ``$a_i= \infty$".
Throughout this article all pairs $(X,D)$ will be of this form.
As such, we frequently refer to them simply as \emph{pairs}. 
When $X$ is projective, we refer to $(X,D)$ as above as a \emph{projective pair}.
We say $(X,D)$ is log-smooth, if $X$ is smooth and $D$ has simple normal 
crossing support. 

Our main aim is now to define a notion of cotangent sheaf, adapted to a pair. 
To this end, and since we will not be exclusively working with smooth varieties, 
we will need a notion of pull-back for Weil divisors (that are not necessarily 
$\bQ$-Cartier). We denote the group of Weil divisors by $\WDiv(X)$ and set
$\WDiv(X)_{\bQ} := \WDiv(X)\otimes \bQ$ to denote the group of $\bQ$-Weil divisors.

\begin{definition}[Pull-back of Weil divisors]\label{def:Wpull-back}
 Let $f:Y\to X$ be a finite morphism between quasi-projective 
normal varieties $X$ and $Y$. We define the pull-back $f^*(D)$ of a $\bQ$-Weil divisor $D\subset X$ by the 
Zariski closure of $f^*(D|_{X_{\reg}})$.

\end{definition}

\begin{notation}
Given a pair $(X,D)$, with $D=\sum d_i \cdot D_i$, we use the following 
notations:

$$
\lfloor D \rfloor : = \sum \lfloor d_i \rfloor \cdot D_i,
$$

$$
\lceil D \rceil : = \sum \lceil d_i \rceil \cdot D_i,
$$
where $\lfloor d_i \rfloor$ denotes the round-down and $\lceil d_i \rceil$ the round-up.
\end{notation}

\begin{definition}[Adapted and strongly adapted morphisms]
\label{def:adapted}
Let $(X,D)$ be an orbifold.
A finite, surjective morphism $f: Y\to X$ is called \emph{adapted} (to $D$) if, 
$f^*D$ is an integral Weil divisor and $f$ is unramified at the generic point of $\lfloor D \rfloor$. 
We say that a given adapted morphism $f: Y \to X$ is \emph{strictly adapted}, if 
we have $f^*D_i = a_i \cdot D_i'$, for some Weil divisor $D_i'\subset Y$.
Furthermore, we call a strictly adapted morphism $f$, \emph{strongly adapted}, if
the branch locus of $f$ only consists of $\supp \bigl(  D-\lfloor D \rfloor  + A  \bigr)$,
where $A$ is a general member of a basepoint free linear system 
on $X$. 

\end{definition}

\begin{remark}\label{rem:KawConstructions}
For a log-smooth pair $(X,D)$, 
the existence of a strongly adapted morphism $f: Y \to X$ 
was established by Kawamata, cf.~\cite[Prop.~4.1]{Laz04-II}.
A similar strategy can be applied to construct strongly adapted 
morphisms $f: Y \to X$ when all the irreducible components of $D$ are 
$\bQ$-Cartier; in particular when $X$ is assumed to be $\bQ$-factorial. 
Alternatively, one can use the following more general statement, which 
follows from Kawamata's original result.
\end{remark}

\begin{proposition}
\label{prop:KAW}
Let $D\subset X$ be a prime divisor on a normal quasi-projective variety. 
For every $m\in \bN$ there is a normal variety $Y$, a finite morphism $f:Y\to X$ and a Weil divisor 
$D_Y\subset Y$ such that 

\begin{enumerate}
\item $f^*D = m\cdot D_Y$, and that
\item \label{prop:GG} the branched locus of $f$ consists of $D$ and a general member of a basepoint free linear system.
\end{enumerate}

\end{proposition}

\begin{proof}
Let $\pi: (\wtilde X, \wtilde D) \to (X,D)$ be a log-resolution. 
By \cite[Prop.~4.1]{Laz04-II} we know that there is a morphism
$\wtilde f: \wtilde Y\to \wtilde X$ of smooth quasi-projective varieties
such that $\wtilde f^* \wtilde D = m D_{\wtilde Y}$, for some Weil divisor 
$D_{\wtilde Y}\subset \wtilde Y$. Define $g:= \pi \circ  \wtilde f$.
Now, let $\mu:\wtilde Y\to Y$ be the birational morphism and $f:Y\to X$
the finite map arising from Stein factorization of $g$.
After normalization, if necessary, the morphism $f:Y\to X$ 
satisfies the required properties\footnote{For an effective, $\pi$-exceptional divisor $E$ and ample divisor $A\subset X$, 
the ample divisor in Kawamata's
construction should be taken to be of the form $(\pi^*A - a\cdot E)$, with $a\in \bQ^+$ sufficiently 
small, so that $(\pi^*A - a\cdot E)$ is ample. This guarantees that Property~\ref{prop:GG} is satisfied.}. 
\end{proof}

The following lemma is useful in the course of the arguments in 
Section~\ref{sect:section4_semipositivity}.
Its proof follows directly from the construction of adapted morphisms. Nevertheless,
for the reader's convenience, we include a brief argument. 

\begin{lemma}
\label{lem:Adapt}
Let $(X,D)$ be a pair with $\lfloor D \rfloor  =0$, and $g: Y\to X$ any finite 
morphism of normal varieties. 
There is an adapted morphism 
$h: Z\to  (X,D)$ factoring through $g$ and a morphism $r:Z\to Y$.

\end{lemma}

\begin{proof}
Setting $D= \sum(1- b_i/a_i) \cdot D_i$, for every $i$, let $n_j\in \bN$ be the integer for which we have 

$$
g^* D_i =  \sum^{k_i}_{j(i)=1} n_j \cdot D_{ij} ,
$$
for some $k_i \in \bN$. Define $m_i:= \rm{lcm} (n_1, \ldots, n_{k_i})$.
By Proposition~\ref{prop:KAW}, for each $D_{ij}\subset Y$ we can 
construct a morphism $r_{ij}: Z_{ij}\to Y$ such that 

$$
r_{ij}^* (D_{ij}) = (\frac{m_i}{n_j} \cdot a_i ) \cdot B_{ij},
$$
for some $B_{ij}\subset Z_{ij}$.
Let $r: Z\to Y$ denote the composition of all such maps. Then, by construction we have 

\begin{align*}
r^*  (g^* D_i)      &  =  \sum_{j=1}^{k_i}  n_j \cdot r^*(D_{ij})   \\
         &  = (a_i \cdot m_i) \sum_{j=1}^{k_i}  B_{ij}  ,
\end{align*}
as required. 
\end{proof}

\begin{notation}\label{not:adapt} Let $f: Y\to X$ be a morphism adapted to $D$, 
where $D=\sum d_i\cdot D_i$, $d_i=1-\frac{b_i}{a_i}\in (0,1]\cap \bQ$.
For every irreducible component $D_i$ of $(D-\lfloor D\rfloor)$, let $\{D_{ij}\}_{j(i)}$ be the 
collection of prime divisors that appear in $f^{*}(D_i)$. We define new divisors in $Y$ by 
\begin{flalign}\label{not}
&D_Y^{ij}:=b_i\cdot D_{ij} \\
&D_{f}:=f^*(\lfloor D\rfloor).
\end{flalign}
\end{notation}

\medskip

Now, let us explain how to define the cotangent sheaf of an orbifold (or a pair).

\begin{definition}[Orbifold cotangent sheaf]\label{def:adaptedsheaf} In the situation of 
Notation~\ref{not:adapt}, denote 
$Y^\circ$ to be the log-smooth locus of the pair $(Y,\sum D_{ij}+D_{f})$ and define 
${D_Y^{ij}}^\circ:=D_Y^{ij}|_{Y^\circ}$. 
Set $\Omega^1_{(Y^\circ,f,D)}$ to be the kernel of the sheaf morphism 

$$
(f|_{Y^\circ})^*\bigl(\Omega_X\log (\lceil D \rceil)\bigr) \longrightarrow
\bigoplus \limits_{i,j(i)}\sO_{{D_Y^{ij}}^\circ}$$
induced by the natural residue map. We define the orbifold cotangent sheaf 
$\Omega_{(Y,f,D)}^{[1]}$ 
by the coherent extension 
$(i_{Y^\circ})_*(\Omega_{(Y^\circ,f,D)}^1)$, where $i_{Y^\circ}$ is the natural inclusion. 
We define the \emph{orbifold tangent sheaf} $\sT_{(Y,f,D)}$ by $(\Omega_{(Y, f ,D)}^{[1]})^*$.

\end{definition}

\section{Restriction results for semistable sheaves}
\label{sect:section3_restriction}

Let $h=(H_1,\dots, H_{n-1})$ be a tuple of ample divisors on a normal
projective variety $X$ of dimension $n$ and $\sE$ a torsion free sheaf.
A theorem of Mehta--Ramanathan \cite{MR82} states that if $m$ is large enough
and $Y \in |mH_{n-1}|$ is a generic hypersurface, then the maximal destabilizing
subsheaf of $\sE|_Y$ is the restriction of the maximal destabilizing
subsheaf of $\sE$.

It is natural to try to extend this restriction theorem to movable polarization.
Unfortunately, in general, such results are not valid for movable curve. 
For example, when $X$ is a projective $K3$ surface then its cotangent bundle
$\Omega^1_X$ is not pseudoeffective, which gives rise to the existence of movable curves
for which the restriction theorem does not hold (cf.~\cite[Sect.~7]{BDPP}).

 In this section, we will prove a restriction theorem for some movable curves
 (see Proposition~\ref{prop:restrict} below). 
 The following lemmas will serve as key technical ingredients in the proof of this 
 result.  
 
 \begin{set-up}\label{SetUpLemma}
 Let $\pi: \wtilde S\to S$ be a birational morphism of smooth projective surfaces 
$\wtilde S$ and $S$.
Let $\wtilde A_{\wtilde S}\subset \wtilde S$ be an ample divisor 
and define $P_S:=[ \pi_*(\wtilde A_{\wtilde S})] \in \N^1(S)_{\bQ}$.
  \end{set-up}
 
 \begin{lemma}[Induced destabilizing subsheaves of small rank on higher birational models. I]\label{lem:rank2}
 In the setting of Set-up~\ref{SetUpLemma},
 let $\sG_S$ be a locally free sheaf on $S$ of rank two.
Assume that $\sF_S \subset \sG_S$ is a saturated and properly destabilizing subsheaf of $\sG_S$.
If $\wtilde \sB \subset \pi^*\sG_S$ is the maximal destabilizing subsheaf of $\pi^*\sG_S$, 
then $\big(\pi_*(\wtilde \sB)\big)^{**} \cong \sF_S$.

 \end{lemma}

\begin{proof}
The proof is a direct consequence of the assumptions made 
 on the slopes of $\wtilde \sB$ and $\sF_S$. More precisely, if we consider the exact sequence 
 $$
 0  \longrightarrow \overline{\pi^*\sF_S} \longrightarrow \pi^*\sG_S \longrightarrow \wtilde \sQ \longrightarrow 0,
 $$
 where $\overline{\pi^*\sF_S}$ is the saturation of $\pi^*\sF_S$ in $\pi^*\sG_S$,
 we have the slope inequality $\mu_{\wtilde A_{\wtilde S}}(\wtilde \sQ)  <  \mu_{\wtilde A_{\wtilde S}} (\pi^*\sG_S)$. 
 This implies that the 
 induced map from $\wtilde \sB$ to $\wtilde \sQ$ is zero, otherwise we 
 get $\mu_{\wtilde A_{\wtilde S}}(\wtilde \sB) < \mu_{\wtilde A_{\wtilde S}}(\wtilde \sQ)$,
 which is absurd.
 It thus follows that
 there is an injection 
 $$
 \wtilde \sB  \hooklongrightarrow \overline{\pi^*\sF_S}
 $$
 But the slope of $\wtilde \sB$ is maximal. Therefore 
 $\wtilde \sB \cong \overline{\pi^*\sF_S}$ and this proves the claim. 
 \end{proof}

 \medskip

 \begin{lemma}\label{lem:injection}
 In the setting of Set-up~\ref{SetUpLemma} let $\sE_S$ be a locally free sheaf of rank three on $S$.
 Define $\wtilde \sG_{\wtilde S} \subset \pi^*\sE_S$ to be the maximal destabilizing subsheaf with respect to $\wtilde A_{\wtilde S}$.
 If $\rank(\wtilde \sG_{\wtilde S}) =2$, then
 for every saturated, properly destabilizing subsheaf $\sF_S \subset \sE_S$  of rank one with respect to $P_S$,
 there is an injective morphism 
 $$
 \sF_S \hooklongrightarrow  \sG_S, 
 $$
 where $\sG_S : = \big( \pi_*\wtilde \sG_{\wtilde S}  \big)^{**}$.
 \end{lemma}
 
\begin{proof}
Let $\overline{\pi^*\sF_S}$ denote the saturation of $\pi^*\sF_S$ in $\pi^*\sE_S$
and set $\wtilde \sQ$ to be the torsion free quotient $\pi^*\sE_S/ \overline{\pi^*\sF_S}$,
whose slopes satisfies the inequality 

 \begin{equation}\label{eq:now}
 \mu(\wtilde \sQ) <  \mu(\pi^*\sE_S).
 \end{equation}
As $\rank(\sF_{S}) =1$, there is a nontrivial morphism $\sigma: \wtilde \sG_{\wtilde S} \to \wtilde \sQ$.
 
Now, if $\sigma$ is injective, then $\mu(\wtilde \sG_{\wtilde S}) \leq \mu(\wtilde \sQ)$.
 It then follows from the inequality (\ref{eq:now}) that 
 $$
 \mu(\wtilde \sG_{\wtilde S})  < \mu(\pi^*\sE_S),
 $$
 a contradiction. 
 Therefore $\wtilde \sK: = \mathrm{Im}(\sigma) \subset \wtilde \sQ$ is a rank 
 one subsheaf, giving rise to the commutative diagram of exact sequences:

 $$
 \xymatrix{
 0  \ar[rr] &&  \overline{\pi^*\sF_S}  \ar[rr] && \pi^*\sE_S  \ar[rr] &&   \wtilde \sQ \ar[rr] && 0 \\
 0 \ar[rr] &&  \wtilde \sM  \ar[u]   \ar[rr] &&  \wtilde \sG_{\wtilde S}  \ar[rr]^{\sigma} \ar[u]  &&  \wtilde \sK  \ar[rr]  \ar[u] && 0.
 }
 $$
 
 \begin{claim}\label{claim:destable}
 $\mu_{\wtilde A_{\wtilde S}}(\wtilde \sK)  > \mu_{\wtilde A_{\wtilde S}}(\wtilde \sQ)$.
 \end{claim}
 
 \noindent
 \emph{Proof of Claim~\ref{claim:destable}.}
 Aiming for a contradiction, assume that $\mu (\wtilde \sK) \leq \mu (\wtilde \sQ)$.
 Now, as $ \wtilde \sG_{\wtilde S}$ is $\wtilde A_{\wtilde S}$-semistable, the inequality 
 
 \begin{equation}\label{eq:obvious}
  \mu ( \wtilde \sG_{\wtilde S})  \leq   \mu (\wtilde \sK)
 \end{equation}
 holds. On the other hand, we have 
 
 \begin{equation}\label{eq:3rd}
 \mu(\wtilde \sK)  \leq \mu (\wtilde \sQ)  <  \mu(\overline{\pi^*\sF_S}),
 \end{equation}
 where the last inequality follows from (\ref{eq:now}); that is, the fact that $\overline{\pi^*\sF_S}\subset \pi^*\sE_S$
 is properly destabilizing. But (\ref{eq:obvious}) and (\ref{eq:3rd}) lead to the inequality 
 
 $$
 \mu(\wtilde \sG_{\wtilde S})  <  \mu( \overline{\pi^*\sF_S}),
 $$
 which contradicts the assumption on $\wtilde \sG_{\wtilde S}$ having the maximal slope.
 This finishes the proof of the claim.

 We now consider the saturation of $\wtilde \sK$, which we denote by $\wtilde \sK_1$, 
 as a properly destabilizing subsheaf of $\wtilde \sQ$ with the resulting exact 
 sequence of sheaves
 
 $$
\xymatrix{
0 \ar[r] & \ar[r]  \wtilde \sK_1 \ar[r]^{\tau}  &  \wtilde \sQ \ar[r]  &  \wtilde \sA \ar[r]  & 0
}
 $$
 that are locally free in codimension one.
 Let $\wtilde \sB$ be the kernel of the induced surjection $\gamma: \pi^*\sE_S \longrightarrow \wtilde \sA$
 and 
 
 $$
 \xymatrix{
 0 \ar[r]  &  \wtilde \sB  \ar[r]  &  \pi^*\sE_S  \ar[r]^{\gamma}  &   \wtilde \sA  \ar[r] &  0,
 }
 $$
 the corresponding exact sequence.
 
 As $\wtilde \sK$ generically coincides with $\ker(\tau) = \wtilde \sK_1$, the induced  
 map $\wtilde \sG_{\wtilde S}  \longrightarrow \wtilde \sA$ is zero. Therefore there
 is an injection 
 
 $$
 \wtilde \sG_{\wtilde S} \hooklongrightarrow \wtilde \sB.
 $$
Now, since $\wtilde \sG_{\wtilde S}$ is the maximal destabilizing subsheaf 
(with respect to the ample divisor $\wtilde A_{\wtilde S}$), it follows that 
$\wtilde \sG_{\wtilde S}$ and $\wtilde \sB$ are isomorphic in codimension one.

On the other hand, from the sequence 

$$
\xymatrix{
0 \ar[r] &  \overline{\pi^*\sF_S}  \ar[r]  & \pi^*\sE_S  \ar[r]^g \ar@/^6mm/[rr]^{\gamma}  &  \wtilde \sQ \ar[r]  &  \wtilde \sA 
},
$$
we can see that there is an injection $\overline{\pi^*\sF_S} = \ker(g) \hooklongrightarrow \ker(\gamma) = \wtilde \sB$.
Noting that $\sF_S \subset \sE_S$ is saturated and thus reflexive, this implies that 
$\sF_S$ injects into  $(\pi_* \wtilde \sG_{\wtilde S})^{**}$.
\end{proof}

 \begin{lemma}[Induced destabilizing subsheaves of small rank on higher birational models. II]
\label{lem:FujitaSubsheaf}
In the situation of Set-up~\ref{SetUpLemma},
let $\sE_S$ be a $P_S$-unstable locally free sheaf of rank three on $S$. 
Assume that $\sE_S$ contains a saturated properly destabilizing  
subsheaf $\sF_S$ of rank one. 
Let $\wtilde \sG_{\wtilde S}  \subset \pi^*\sE_S$ be the maximal $\wtilde A_{\wtilde S}$-destabilizing 
subsheaf of $\pi^*\sE_S$ and define $\sG_S: = (\pi_* \wtilde \sG_{\wtilde S})^{**}$. Then,

\begin{enumerate}
\item \label{item:ref} either $\sG_S$ is a properly destabilizing subsheaf of $\sE_S$,
 
\item or \label{item:filter1} there is a nontrivial morphism 

$$
(*)  \; \; \; \; \;   \pi^* \sF_S  \to  \wtilde \sG_{\wtilde S}
$$
inducing an injection $\sF_S \to \sG_S$, whose image is properly destabilizing.

\item \label{item:filter2} Or the Harder--Narasimhan filtration of $\pi^*\sE_S$ has two steps.
With $\wtilde \sD_{\wtilde S}:=  F^{\HN}_2(\pi^*\sE_S)$ and $\sD_S:= (\pi_* \wtilde \sD_S)^{**}$,
 there is an injection $\sF_S \to \sD_S \subset \sE_S$. Moreover, we either have 

$$
\mu_{P_S}(\sD_S)  >  \mu_{P_S}(\sE_S)
$$ 
or $\sD_S$ is not semistable, with the image of $\sF_S$ in $\sD_S$ being 
a properly destabilizing subsheaf.

\end{enumerate}

\end{lemma}

\medskip

\begin{proof}
We exclude Item~\ref{item:ref} by making the assumption that 

\begin{equation}\label{eq:Assump}
\mu_{P_S} (\sG_S) \leq  \mu_{P_S} (\sE_S) .
\end{equation}
Assume further that $\rank (\sG_S)=2$. Then, according to Lemma~\ref{lem:injection}
 we have an injection 
 \begin{equation}\label{eq:FinalInject}
   \sF_S \hooklongrightarrow \sG_S. 
   \end{equation}
Using (\ref{eq:Assump}) we can see that the image of $\sF_S$ under the map (\ref{eq:FinalInject})
  properly destabilizes $\sG_S$.

Now, if $\rank (\sG_S) =1$, consider the exact sequence 
 
 $$
\xymatrix{
0 \ar[r] &  \wtilde \sG_{\wtilde S} \ar[r] & \pi^*\sE_S \ar[r]   \ar[r]^{\nu} & \wtilde \sC \ar[r]  & 0.  
}
$$
Using (\ref{eq:Assump}) again, we can see that there is an injection 
$\pi^* \sF_S \hooklongrightarrow \wtilde \sC$.
Then, the inequalities 

\begin{align*}
 \mu(\pi^*\sF_S)  & >  \mu (\pi^*\sE_S), \; \; \; \text{since $\sF_S \subset \sE_S$ is destabilizing}\\
                           & >  \mu(\wtilde \sC), \; \; \; \text{as $\wtilde \sG_{\wtilde S} \subset \pi^*\sE_S$ is destabilizing}
\end{align*}
imply that the image of $\pi^*\sF_S$ under $\nu$ destabilizes $\wtilde \sC$.
Therefore, there is a second step $\wtilde \sD_{\wtilde S}$ in the HN-filtration of $\pi^*\sE_S$.

\begin{claim}\label{claim:2nd}
There is an injection $\pi^*\sF_S  \hooklongrightarrow  \wtilde \sD_{\wtilde S}$.
\end{claim}

Assuming Claim~\ref{claim:2nd} for the moment, we proceed to finish 
the proof of the lemma. We note that once the injection in Claim~\ref{claim:2nd}
 exists, then we have $\sF_S \hooklongrightarrow \sD_S$.
 Now, either $\mu(\sD_S)> \mu(\sE_S)$ or 

 $$\mu(\sD_S)\leq \mu(\sE_S).$$
 If the latter inequality holds, then (the image of) $\sF_S$ properly 
destabilizes $\sD_S$ and this finishes the proof of the lemma.

\medskip

It now remains to establish Claim~\ref{claim:2nd}.

\noindent
\emph{Proof of Claim~\ref{claim:2nd}.} 
The first observation is that $\wtilde \sD_{\wtilde S} \subset \pi^*\sE_S$ is destabilizing.
To see this, we consider the two exact sequences 

$$
\xymatrix{
0 \ar[r] &  \wtilde \sG_{\wtilde S}  \ar[r]  &  \wtilde \sD_{\wtilde S} \ar[r] &  \sQ' \ar[r]    &  0, 
}
$$

$$
\xymatrix{
0 \ar[r]  &  \wtilde \sD_{\wtilde S}  \ar[r]  &  \pi^* \sE_S  \ar[r]^j  &  \wtilde \sA  \ar[r] &  0,
}
$$
with the two sheaves $\sQ'$ and $\wtilde \sA$ being the successive quotients of the HN-filtration. 
By the definition of HN-filtration, we know that 

\begin{equation}\label{1ST}
\mu(\sQ')  >  \mu(\wtilde \sA).
 \end{equation}
On the other hand, as $\wtilde \sG_{\wtilde S}$ is the maximal destabilizing 
subsheaf, from the first sequence we have 

\begin{equation}\label{2ND}
\mu (\sQ') <  \mu(\wtilde  \sD_{\wtilde S})  <  \mu (\wtilde \sG_{\wtilde S}).
 \end{equation}
Combining (\ref{1ST}) and (\ref{2ND}) we have 

$$
\mu(\wtilde \sA)  <  \mu(\wtilde \sD_{\wtilde S}).
$$
From the second sequence it now follows that 

$$
\mu(\pi^*\sE_S) <  \mu(\wtilde \sD_{\wtilde S}), 
$$
i.e. $\wtilde \sD_{\wtilde S}$ destabilizes $\pi^*\sE_S$.
Consequently $\pi^*\sF_S$ projects to zero via the morphism $j$
so that there is a map $\pi^*\sF_S \to  \wtilde \sD_{\wtilde S}$,
as required. 
\end{proof}

 \medskip

 \begin{lemma}[Induced destabilizing subsheaves of small rank on higher birational models. III]
 \label{lem:last}
 In the setting of Lemma~\ref{lem:FujitaSubsheaf}, let $\sE_S$ be a $P_S$-unstable locally free
 sheaf of rank $3$ on $S$. Assume that $\sE_S$ contains a saturated proper destabilizing 
 subsheaf $\sF_S$ of rank $2$. Let $\wtilde \sN_{\wtilde S} \subset \pi^* \bigwedge^2 \sE_S$ be the maximal destabilizing 
 subsheaf of $\pi^* \bigwedge^2 \sE_S$ and $\sN_S: = (\pi_* \wtilde \sN_{\wtilde S})^{**}$. Then,

  \begin{enumerate}
 \item either the subsheaf $\sN_S = \bigl( ( \pi_*\wtilde \sN_{\wtilde S})^{**}  \bigr)$ destabilizes $\bigwedge^2 \sE_S$, 
 \item or we have an injection $\bigwedge^2 \sF_S \hooklongrightarrow \sN_S$ with the image of $\bigwedge^2\sF_S$ 
 being properly destabilizing, 
 \item or the second step $\wtilde \sA_{\wtilde S}$ of the HN-fitlration of $\bigwedge^2 \pi^*\sE_S$ 
 descends to a destabilizing subsheaf $\sA_S$ of $\bigwedge^2 \sE_S$, 
 \item or $\bigwedge^2 \sF_S$ maps into 
 $\sA_S$ inducing a properly destabilizing subsheaf.
 \end{enumerate}

 \end{lemma}

 \begin{proof}
  By using the fact that 
 $$
  \mu(\bigwedge^2 \sE_S) = 2\cdot \mu (\sE_S)   \;\;\;\;\;\;\;  \text{and} \;\;\;\;\;\;\;  \mu(\bigwedge^2 \sF_S) = 2\cdot \mu(\sF_S),
 $$
 we can see that $\bigwedge^2 \sF_S \subset \bigwedge^2 \sE_S$ is a 
 properly destabilizing, saturated subsheaf. Now, as $\rank(\bigwedge^2 \sF_S) =1$ and $\rank(\bigwedge^2 \sE_S)=3$, 
 Lemma~\ref{lem:FujitaSubsheaf} applies and settles the proof. 
  \end{proof}

 \medskip
 
 \begin{remark} The main objective of the above lemmas is to show that 
once we have an unstable bundle $\sE_S$ of rank at most $3$ on $S$,  
there is a term of a HN filtration on $\wtilde S$ that ``descends" to 
a destabilizing subsheaf of $\sE$ (or $\bigwedge^2 \sE$). Now, either the sheaf on $\wtilde S$ is 
the first or second step of the HN-filtration of $\pi^*\sE$ 
(or $\wtilde \sN_{\wtilde S}\subset \bigwedge^2\pi^*\sE$), 
that is 
\begin{equation}\label{eq:explain}
   \mu_{P_S} (\sG_S) > \mu_{P_S}(\sE)       \;\;\;\;\;\;\;\;\;   \text{(resp. $\mu_{P_S}(\sN_S) > \mu_{P_S}(\bigwedge^2 \sE_S)$)},
\end{equation}
or it is the maximal destabilizing subsheaf of one of the two steps of the HN filtration for $\pi^*\sE_S$. 
To be more precise, for example when rank of a properly detabilizing subsheaf $\sF$ of $\sE_S$ is one, 
if the inequality (\ref{eq:explain}) does not hold, then the image of
$\sF_S$ in $\sG_S$ is destabilizing (see~\ref{item:filter1}). Therefore, according to Lemma~\ref{lem:rank2}, the maximal 
destabilizing subsheaf $\wtilde \sB \subset \pi^*{\sG_S}$ descends to the sheaf $(\pi_*\wtilde \sB)^{**}$ on $S$ that is
isomorphic to the saturation of (the image of)
$\sF_S$ in $\sG_S$. In particular we have 

$$
\mu_{P_S} \big( ( \pi_*\wtilde \sB )^{**} \big)  > \mu_{P_S}(\sE_S).
$$ 
One can argue similarly for the case of Item~\ref{item:filter2} or when $\rank(\sF_S)=2$.
Uniqueness of such sheaves 
on $\wtilde S$ will play a crucial role in the proof of Proposition~\ref{prop:restrict}.

\end{remark}

 \

The next proposition is the main result in this section, proving a restriction 
result for semistable sheaves with respect to a particular set of movable classes. 
As we will see later in Section~\ref{sect:section5_PseudoEffectivity}, 
these classes naturally arise in the 
context of positivity problems for second Chern classes.

\begin{proposition}[A restriction result for movable classes]
\label{prop:restrict}
Let $X$ be a normal projective threefold that is smooth in codimension two.
Let $P\in \overline \Mov^1(X)$ and $H_1, H_2\in \Amp(X)_{\bQ}$.
Let $\sE$ be a torsion free sheaf on $X$ of rank $3$. There exists a positive 
integer $M_1$ such that for all sufficiently divisible integers $m_1\geq M_1$, 
there is a Zariski open subset $V_{m_1}\subset |m_1\cdot H_1|$ 
for which the following properties holds.
 
\begin{enumerate}
\item \label{item:prop1} Every member $S\in V_{m_1}$ is smooth, irreducible and 
is contained in $X_{\reg}$.

\item \label{item:prop2} The restriction $\sE|_S$ is torsion free.

\item \label{item:prop3} The divisor $P|_S$ is nef.

\item \label{item:prop4} For every such $S$, there exists $M_2\in \bN^+$ such that 
every sufficiently divisible integer $m_2\geq M_2$ gives rise to a Zariski open subset 
$V_{m_2}\subset |m_2 \cdot (P+H_2)|_S |$, where every $\gamma \in V_{m_2}$ is 
a smooth, irreducible curve in $S$ such that $\sE|_{\gamma}$ is locally free and verifies the following property:

\noindent
(*) The formation of the $\HN$-filtration of $\sE$ with respect to $(H_1, P+H_2)$ 
commutes with restriction to $\gamma$, i.e. $\HN_{\bullet}(\sE)|_{\gamma} =  \HN_{\bullet}(\sE|_{\gamma})$.

\end{enumerate}

\end{proposition}

\begin{proof} Let $\pi:\wtilde X\to X$ be the birational morphism and $\wtilde X$ the smooth 
projective variety with ample divisor $\wtilde A\subset \wtilde X$ 
associated to 
the Fujita approximation of the big movable divisor $P+H_2$ in the interior of $\Mov^1(X)$, 
i.e. 
$$
\pi_*\wtilde A = [(P+H_2)]
$$
(cf.~Proposition~\ref{prop:Fujita},~\cite[Sect.~11.4]{Laz04-II} and~\cite[Chapt.~III]{Nak04}). 

Now, let $N_1\in \bN^+$ be a sufficiently large and divisible integer such that for every $n_1\geq N_1$, 
there are open subsets $U_{n_1}\subset |n_1\cdot \pi^*H_1|$ and $\wtilde U_{n_1}\subset |n_1\cdot \wtilde A|$,
where for every subscheme $\wtilde S:= D_{n_1}$ and $\wtilde \gamma:=\wtilde D_{n_1}\cap D_{n_1}$, with 
$D_{n_1}\in U_{n_1}$ and $\wtilde D_{n_1}\in \wtilde U_{n_1}$, we have: 

\begin{enumerate}
\item \label{item:1n1} Both $\wtilde S$ and $\wtilde \gamma$ are smooth and irreducible. 
\item \label{item:2n1} The restrictions $(\pi^{[*]}\sE)|_{\wtilde S}$ and $\big( \bigwedge^{[2]} \pi^{[*]}\sE \big)|_{\wtilde S}$ are locally free.
\item \label{item:3n1} The HN-filtration of $\pi^{[*]}\sE$ with respect to $(\pi^*H_1, \wtilde A)$ 
verifies: $\HN_{\bullet}\bigl((\pi^{[*]}\sE\bigr)|_{\wtilde S})=\HN_{\bullet}(\pi^{[*]}\sE)|_{\wtilde S}$.
In addition, the same property holds for $\bigwedge^{[2]} \pi^{[*]} \sE$.
\end{enumerate}

\noindent
The positive integer $N_1$ exists, thanks to Bertini theorem and Langer's restriction 
theorem for stable sheaves, cf.~\cite{Langer04a}. 

\vspace{4 mm}

\noindent \emph{Step.~1. (Reflexivity assumption).}
By the Bertini theorem and~\cite[Thm.~12.2.1]{EGA4-2}, and as $P\in \overline \Mov^1(X)$, 
there exists a positive integer $N_2$ such that 
for every sufficiently divisible $n_2\geq N_2$ there exists a Zariski open 
subset $V_{n_2}\subset |n_2\cdot H_1|$, where every $S\in V_{n_2}$
satisfies the three Properties~\ref{item:prop1},~\ref{item:prop2} and~\ref{item:prop3}
\footnote{Here we are using the fact, which is a consequence of Fujita's approximation for movable divisors, 
that any codimension one movable class 
is \emph{nef in codimension one}, that is its restriction to a sufficiently general surface is nef.}. 
We can also ensure that 
every $S\in V_{n_2}$ is transversal to the exceptional centre of $\pi$.  
Furthermore, as $P|_S$ is nef, we can find $N_3\in \bN^+$ such that for each 
sufficiently divisible $n_3\geq N_3$, the general member of $\gamma\in |n_3\cdot (P+H_2)|_S|$
is smooth and is contained in an open subset of $X$ over which the HN-filtration of 
$\sE$ (with respect to $(H_1, P+H_2)$) is a filtration of $\sE$ by locally-free sheaves.
Therefore, to prove that Property (*) is verified by $\gamma$, we 
may assume, without loss of generality, that $\sE$ is reflexive
and therefore for a suitable choice of $S$, its restriction $\sE|_S$ is locally free.

\vspace{3 mm}

\noindent \emph{Step.~2. (Construction of $\gamma$).} 
Let $M_2\geq N_3$ be a sufficiently large and divisible integer such that 
for every $m_2\geq M_2$ there exists a Zariski open subset $V_{m_2}\subset |m_2(P+H_2)|_S |$, 
where every curve $\gamma \in V_{m_2}$ is smooth and is contained in the 
complement of the exceptional center of $\pi$. 
Furthermore, 
$\sE|_{\gamma}$ is locally free, 
and if $\sE|_{\gamma}$ is not 
semistable, then $\sE_S:=\sE|_S$ is not semistable with respect to $(P+H_2)|_S$
and more generally we have $\HN_{\bullet}(\sE_S)|_{\gamma}=\HN_{\bullet}(\sE|_{\gamma})$. 
The existence of such $M_2$ is guaranteed by Bertini theorem and Mehta--Ramanathan's restriction theorem, 
cf.~\cite{MR82}. 

Summarizing these geometric constructions, by choosing sufficiently large 
$n_i$ and $m_2$ and by shrinking $V_{n_i}$ and $V_{m_2}$ if necessary, 
we have $\gamma\subset S$ and $\wtilde \gamma \subset \wtilde S$, with surjective morphisms 

$$
\pi|_{\wtilde S}: \wtilde S\to S  \;\;\;\;  \text{and} \;\;\;  \pi|_{\wtilde \gamma}: \wtilde \gamma\to \gamma,
$$
and satisfying Properties~\ref{item:1n1}, \ref{item:2n1}, \ref{item:3n1},
and those in the setting of the proposition but excluding (*).

Now, to prove the proposition, it suffices to show that if a reflexive sheaf $\sE$ is semistable 
with respect to $(H_1, P+H_2)$, then so is $\sE|_{\gamma}$. So let us now 
assume that $\sE$ is indeed semistable. The next steps are devoted to proving 
that $\sE|_{\gamma}$ is also semistable.

Aiming for a contradiction, assume that $\sE|_{\gamma}$ is not semistable. 
It follows that $(\pi^{[*]}\sE)|_{\wtilde \gamma} =  \pi^* (\sE|_{\gamma})$ is not semistable. 
By the construction of $\gamma$, this also implies that $\sE_S$ is unstable.
Therefore, $\pi^*\sE_S$ is unstable with respect to $\wtilde \gamma$ (which is numerically proportional to 
$\wtilde A|_{\wtilde S}$).
Moreover, thanks to \cite{Langer04a},
unstability of $(\pi^{[*]}\sE|_{\wtilde \gamma})$ implies that $\pi^{[*]}\sE$ is unstable (with respect to $(\pi^*H_1, \wtilde A)$).

\begin{claim}\label{claim:NotSS}
$\bigwedge^{[2]} \pi^{[*]}\sE$ is not semistable with respect to $(\pi^*H_1, \wtilde A)$.
\end{claim}

\noindent
\emph{Proof of Claim~\ref{claim:NotSS}.}
This follows directly from rank considerations. Suppose $\sM \subset \pi^{[*]}\sE$ is a saturated 
destabilizing subsheaf. 

If $\rank(\sM)=2$, then as $\mu(\bigwedge^2 \sM) = 2\cdot \mu(\sM)$
and $\mu(\bigwedge^{[2]} \pi^{[*]}\sE)  = 2\cdot \mu(\pi^{[*]}\sE)$, the 
subsheaf $\bigwedge^2 \sM \subset \bigwedge^{[2]} \pi^{[*]}\sE$ is destabilizing. 

Now, if $\rank(\sM)=1$, then $\mu(\sQ)< \mu(\pi^{[*]}\sE)$, where $\sQ$ is the torsion free quotient 
$\pi^{[*]}\sE/ \sM$. Again by using the fact that $\mu(\bigwedge^2 \sQ) =2\cdot \mu(\sQ)$, 
we find that the inequality 

$$
\mu(\bigwedge^{[2]}\sQ) <  \mu(\bigwedge^{[2]} \pi^{[*]}\sE)
$$
holds, implying that $\bigwedge^{[2]} \pi^{[*]}\sE$ is not semistable. This finishes the proof of Claim~\ref{claim:NotSS}.\qed

\medskip

Now, let $\wtilde \sG$ and $\wtilde \sN$ be the first step 
of the HN-filtration of $\pi^{[*]}\sE$ and $\bigwedge^{[2]} \pi^{[*]}\sE$, respectively
and define 

$$
\sG: = \big( \pi_*(\wtilde \sG) \big)^{**}  \;\;\;\;\;\;  \text{,} \;\;\;\;\;\;  \sN : = \big( \pi_*(\wtilde \sN)  \big)^{**}.
$$
Assuming that they exist, let $\wtilde \sD$ and $\wtilde \sA$ be the second step of the HN-filtration of
$\pi^{[*]}\sE$ and $\bigwedge^{[2]} \pi^{[*]}\sE$ and set

$$
\sD: = \big( \pi_*(\wtilde \sD) \big)^{**}  \;\;\;\;\;\;  \text{,} \;\;\;\;\;\;  \sA : = \big( \pi_*(\wtilde \sA)  \big)^{**}.
$$

Let $m_1\in \bN$ be a sufficiently divisible integer, verifying the inequality 
$m_1 \geq M_1:=\max\{ N_1, N_2 \}$, and such that there
is an open subset $V_{m_1}\subseteq | m_1 \cdot H_1 |$ for which we have the following property. 
After shrinking $V_{m_1}$, if necessary, for every $S\in V_{m_1}$ (defined in Steps.~1 and 2), we have

\begin{enumerate}
\item \label{p1} $\wtilde S:= \pi^*(S) \in U_{m_2}$,

\item $\wtilde \sG|_{\wtilde S}$, $\wtilde \sD|_{\wtilde S}$, $\wtilde \sN|_{\wtilde S}$ and $\wtilde \sA|_{\wtilde S}$ are locally 
free, 
\item \label{miss} $S$ does not intersect the singular loci of $\sE$, $\sG$, $\sD$, $\sN$ and $\sA$, and

\item \label{item:ISOM} we have $( \pi_* (\wtilde \sG|_{\wtilde S}) )^{**}\cong  
(\pi_* \wtilde \sG)^{**}|_S$ and the same holds for $\wtilde \sD$, $\wtilde \sN$ and $\wtilde \sA$.
\end{enumerate}

\noindent 
\emph{Step.~3. (Extension of maximal destabilizing subsheaves).} 
We are now in the setting where we can apply Lemmas~\ref{lem:rank2}, \ref{lem:FujitaSubsheaf} and~\ref{lem:last}.
Let $\wtilde \sG_{\wtilde S}$ and $\wtilde \sD_{\wtilde S}$ be the first and second steps of the HN-filtration of 
$\pi^*{\sE_S}$, assuming that the latter exists. By construction, using Property~\ref{item:3n1} together with
Properties~\ref{p1} and~\ref{miss}, there are isomorphism 

$$
\wtilde \sG_{\wtilde S}\cong \wtilde \sG|_{\wtilde S} \; \; , \; \;  \wtilde \sD_{\wtilde S} \cong \wtilde \sD|_{\wtilde S}.
$$

Let us first assume that $\sE_S$ contains a saturated destabilizing subsheaf
$\sF_S$ of rank one. According to Lemma~\ref{lem:FujitaSubsheaf}, one of 
the locally free sheaves $\sG_S :=  \pi_*  (\wtilde \sG_{\wtilde S})^{**}$ 
or $\sD_S: = \pi_*(\wtilde \sD_{\wtilde S})^{**}$

\smallskip

(*) either destabilizes $\sE_S$, 

(**) or it is not semistable and admits an injection from $\sF_S$ with a properly destabilizing image.

\smallskip

We identify $\sF_S$ with 
its image under $\sF_S \hooklongrightarrow \sG_S$ (respectively, $\sF_S$ with its image under 
$\sF_S \hooklongrightarrow \sD_S$).

Now, if (*) holds, then we have our desired contradiction since by \ref{item:ISOM}
the subsheaf $(\pi_*\wtilde \sG)^{**}\subset \sE$ or $(\pi_*\wtilde \sD)^{**}\subset \sE$ 
is properly destabilizing.

So assume that (**)  is true. We observe that by our choice of $S$ (Property \ref{miss}) we have 
$(\pi|_S)^* \sG_S =  \big( \pi^{[*]}  \sG \big)|_{\wtilde S}$ (and $(\pi|_S)^* \sD_S = (\pi^{[*]}\sD)|_{\wtilde S}$).
We can now apply Lemma~\ref{lem:rank2}. More precisely, 
if $\sF_S \subset \sG_S$ (or $\sF_S \subset \sD_S$) is destabilizing $\sG_S$ (respectively, $\sD_S$), 
then, according to Lemma~\ref{lem:rank2},
the maximal destabilizing 
subsheaf $\wtilde \sL_{\wtilde S}$ of $\pi^*(\sG_S)$ verifies the isomorphism 

\begin{equation}\label{eq:global}
\big((\pi|_{\wtilde S})_* (\wtilde {\sL}_{\wtilde S}))^{**} \cong  \overline{\sF_S},
\end{equation}
where $\overline \sF_S$ is the saturation of $\sF_S$ in $\sG_S$, and 
similarly when $\sF_S \subset \sD_S$ is destabilizing.

On the other hand, again by the restriction result \cite{Langer04a}, we have 

$$
\wtilde \sL|_{\wtilde S} \cong  \wtilde \sL_{\wtilde S},
$$
where $\wtilde \sL$ is the maximal destabilizing subsheaf of $\pi^{[*]}\sG$
(after adjusting the choice of $S$ and $\wtilde S$ if necessary).
Therefore, by (\ref{eq:global}) $\wtilde \sL$ descends to a destabilizing subsheaf 
of $\sE$, i.e. $(\pi_*\sL)^{**} \subset \sE$ is destabilizing, a contradiction. 
Similarly we can argue that the maximal destabilizing subsheaf of 
$\wtilde \sD$ descends to a   destabilizing subsheaf of $\sE$.

Next, we assume that $\rank(\sF_S)=2$. In this case Lemma~\ref{lem:last}
applies. The same arguments as above (this time for $\sN$, $\sA$ instead of $\sG$ and $\sD$) then shows
that $\bigwedge^2\sE$ is not semistable.
On the other hand, thanks to \cite[Thm.~4.2]{GKP15}, we know that semistable sheaves 
with respect to movable classes over normal varieties form a tensor category\footnote{This result
has an additional assumption; $\bQ$-factoriality of $X$. As we pointed out in Remark~\ref{QQ},
this condition is unnecessary in the context of this proposition.}.
As a result we again get a contradiction to the semistability 
assumption on $\sE$. 
\end{proof}

\begin{remark}[Restriction of $\HN$-filtration for $\bQ$-twisted sheaves]
\label{rem:Qrestrict} 
We note that the consequences of Proposition~\ref{prop:restrict} 
are still valid for $\bQ$-twisted torsion-free sheaves. More precisely, given a 
$\bQ$-twisted, torsion-free sheaf $\sE\langle B\rangle$ and 
$H_1, H_2 \in \Amp(X)_{\bQ}$, $P\in \overline\Mov^1(X)$, there is a complete intersection surface $S$ and 
$\gamma\subset S$, as in Proposition~\ref{prop:restrict}, such that 
$\HN_{\bullet}(\sE\langle B \rangle)|_{\gamma}=
      \HN_{\bullet}(\sE\langle B\rangle|_{\gamma})$.
To see this we can use the fact that, for every torsion free sheaf $\sF$ and Weil $\bQ$-divisor $B$, we have

$$
\HN_{\bullet} \big(  \sE \langle B \rangle \big)  =   \big( \HN_{\bullet}(\sE) \big)  \langle B \rangle ,  
$$
which follows directly from the definitions. The rest now follows from Proposition~\ref{prop:restrict}.

\end{remark}

\begin{remark}[Restriction result in higher dimensions]
Following the same arguments 
 as those of the proof of Proposition~\ref{prop:restrict}, we can remove the 
restriction on the dimension, that is the consequences of Proposition~\ref{prop:restrict}
are still valid, if $X$ is of dimension $n> 3$ and the polarization is 
$(H_1, H_2, \ldots, (P+H_{n-1}))$, for any $H_1,\ldots, H_{n-1}\in \Amp(X)_{\bQ}$,
as long as $\rank (\sE) =3$.
\end{remark}

As an immediate consequence we establish a Bogomolov--Gieseker inequality
for ($\bQ$-twisted) sheaves of small rank that are semistable with respect to movable classes 
of the form that appear in Proposition~\ref{prop:restrict}. Although we do not 
use this inequality in the rest of the paper, we find it to be of independent 
interest.  

\begin{proposition}[Bogomolov--Gieseker inequality in higher dimensions]\label{prop:highBog-ineq}
Let $X$ be an $n$-dimensional, normal projective variety that is smooth in codimension two 
and $\sE\langle B\rangle$ a $\bQ$-twisted, 
reflexive sheaf of rank at most equal to $3$ on 
$X$. If $\sE\langle B\rangle$ is semistable with respect to 
$(H_1, H_2, \ldots, (P+H_{n-1}))$,
where $H_i\in \Amp(X)_{\bQ}$ and $P\in \overline\Mov^1(X)$, then 
$$\bigl(2r \cdot c_2(\sE\langle B\rangle) - 
      (r-1)\cdot  c_1^2(\sE\langle B\rangle)\bigr)\cdot H_1 \ldots \cdot H_{n-2}\geq 0.$$

\end{proposition}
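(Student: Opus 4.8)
The plan is to reduce the higher-dimensional Bogomolov-Gieseker inequality to the surface case established in Proposition~\ref{prop:QBog}, using the restriction machinery just developed. The key observation is that the quantity $\bigl(2r\cdot c_2(\sE\langle B\rangle) - (r-1)\cdot c_1^2(\sE\langle B\rangle)\bigr)\cdot H_1\cdots H_{n-2}$ is an intersection number that should be computable after restricting to a suitable complete-intersection surface. First I would invoke Remark~\ref{rem:Qrestrict}, which extends Proposition~\ref{prop:restrict} both to $\bQ$-twisted sheaves and to dimension $n\geq 3$ with polarization $(H_1,\ldots,H_{n-2},P+H_{n-1})$; this produces a surface $S$ arising as a complete intersection cut out by $H_1,\dots,H_{n-2}$ and a curve $\gamma\subset S$ in the linear system associated to $(P+H_2)|_S$, such that the $\HN$-filtration commutes with restriction to $\gamma$.

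Next I would restrict $\sE\langle B\rangle$ to $S$. By the choices in Remark~\ref{rem:Qrestrict} and Proposition~\ref{prop:restrict}, the restriction $\sE\langle B\rangle|_S$ is a $\bQ$-twisted locally free sheaf on the smooth surface $S$, and because the formation of the $\HN$-filtration commutes with restriction to $\gamma$ (hence, by the construction, $\sE\langle B\rangle|_S$ inherits semistability with respect to $(P+H_2)|_S$), the restricted sheaf is semistable with respect to an ample class on $S$. The crucial compatibility is that the Chern-class discriminant behaves well under restriction: the intersection number
\begin{equation*}
\bigl(2r\cdot c_2(\sE\langle B\rangle) - (r-1)\cdot c_1^2(\sE\langle B\rangle)\bigr)\cdot H_1\cdots H_{n-2}
\end{equation*}
equals the corresponding discriminant of $\sE\langle B\rangle|_S$ computed on $S$, by the projection formula together with the fact that $S=H_1\cap\cdots\cap H_{n-2}$ represents the class $H_1\cdots H_{n-2}$.

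I would then apply Proposition~\ref{prop:QBog} directly to $\sE\langle B\rangle|_S$ on the smooth projective surface $S$: since this restricted $\bQ$-twisted sheaf is locally free, of rank at most $3$, and semistable with respect to the ample $\bQ$-divisor $A:=(P+H_2)|_S$, the surface Bogomolov-Gieseker inequality yields
\begin{equation*}
2r\cdot c_2(\sE\langle B\rangle|_S) - (r-1)\cdot c_1^2(\sE\langle B\rangle|_S)\geq 0.
\end{equation*}
Combining this with the restriction identity for the discriminant gives the desired inequality.

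The main obstacle is establishing that semistability of $\sE\langle B\rangle$ with respect to the movable polarization $(H_1,P+H_2)$ genuinely transfers to semistability of $\sE\langle B\rangle|_S$ with respect to the ample class on $S$ — this is exactly the content secured by Proposition~\ref{prop:restrict} and its $\bQ$-twisted extension in Remark~\ref{rem:Qrestrict}, and it is the reason the restriction theorem for strongly movable curves was needed in the first place. A secondary technical point is verifying that the discriminant intersection number is preserved under restriction to the complete-intersection surface $S$; this is a standard consequence of the Grothendieck-Riemann-Roch or projection formula applied to the regular embedding $S\into X$, but one must take care that $S\subset X_{\reg}$ (guaranteed by Property~\ref{item:prop1} of Proposition~\ref{prop:restrict}) so that the Chern classes and their restrictions are well-defined, and that the $\bQ$-twist $B$ restricts compatibly, which follows from $c_i(\sE\langle B\rangle)|_S = c_i(\sE\langle B\rangle|_S)$ by the defining formulas for Chern classes of $\bQ$-twisted sheaves.
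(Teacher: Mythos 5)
Your proposal is correct and takes essentially the same route as the paper: the paper's proof is precisely the one-line combination of Proposition~\ref{prop:restrict} (in its $\bQ$-twisted, higher-dimensional form from Remark~\ref{rem:Qrestrict}) with the surface inequality of Proposition~\ref{prop:QBog}. The details you supply --- semistability passing to $\sE\langle B\rangle|_S$ with respect to the ample class $(P+H_2)|_S$ (ample because $P|_S$ is nef by Property~\ref{item:prop3}), and the compatibility of the discriminant with restriction to the complete-intersection surface --- are exactly the steps left implicit in the paper's ``immediate consequence''.
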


\begin{proof} This is an immediate consequence of the restriction result in Proposition~\ref{prop:restrict}
and Remark~\ref{rem:Qrestrict}
together with Proposition~\ref{prop:QBog}.
\end{proof}

\section{Semipostivity of adapted sheaf of forms}
\label{sect:section4_semipositivity}

In~\cite{CP16} Campana and P\u{a}un remarkably prove that the orbifold 
cotangent sheaf of a log-smooth pair $(X,D)$ is semipositive with respect 
to movable curve classes on $X$ (see Theorem~\ref{thm:CP16} below).
See Definition~\ref{def:SP} for the definition of this notion of semipositivity.
Currently it is not clear if this result can be easily extended to the case 
of singular pairs. In the present section we show that, for a special subset 
of movable classes, the generalization to singular pairs
can be achieved by essentially reducing to the smooth case.

\begin{theorem}[\protect{Orbifold semipositivity with respect to movable classes, cf.~\cite[Thm.~1.2]{CP16}}]
\label{thm:CP16} 
Given a log-smooth pair $(X,D)$, if $(K_X+D)$ is pseudoeffective, then for any movable 
class $\gamma\in \Mov_1(X)$ and 
any adapted 
morphism $f:Y\to X$, where $Y$ is smooth, the orbifold cotangent sheaf $\Omega_{(Y,f,D)}^1$ is 
semipositive with respect to 
$f^*(\gamma)$\footnote{Here we are following the notation of \cite{CP16} for ``pullback" of movable $1$-cycles. 
Since in the current paper
we are only concerned with those cycles that are defined by divisors, we have forgone the exact 
definition of this notion.}.
\end{theorem}

In the next proposition we slightly refine Theorem~\ref{thm:CP16} for a class of movable $1$-cycles that 
we call \emph{complete intersection $1$-cycles}. 
As we will see later in 
Section~\ref{sect:section5_PseudoEffectivity}, such classes appear naturally in our
treatment of the pseudoeffectivity of $c_2$.

\begin{definition}[Complete intersection movable classes]
We say that $\gamma\in \Mov_1(X)$ 
is a complete intersection movable 
$1$-cycle, if there are classes $B_1,\ldots, B_{n-1}\in \N^1(X)_{\bQ}$ such that $\gamma$ 
is numerically equivalent to the cycle defined 
by $(B_1\cdot \ldots \cdot B_{n-1}) \in \N_1(X)_{\bQ}$. 

\end{definition}

\begin{proposition}[A refinement of the orbifold semipositivity result]\label{prop:CP16refined}
Let $(X,D)$ be a log-smooth pair and $\gamma \in \Mov_1(X)$ a complete intersection movable 
cycle. If $(K_X+D)$ is pseudoeffective, then for any strictly adapted morphism $g:Z\to X$ (see Definition~\ref{def:adapted}),
 $\Omega_{(Z,g,D)}^{[1]}$ is semipositive with respect to 
$g^*\gamma$.

\end{proposition}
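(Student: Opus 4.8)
The plan is to reduce Proposition~\ref{prop:CP16refined} to the known smooth case recorded in Theorem~\ref{thm:CP16}. The essential point is that the strictly adapted morphism $g:Z\to X$ produces a variety $Z$ that need not be smooth, whereas Theorem~\ref{thm:CP16} requires an adapted morphism whose source is smooth. First I would pass to a smooth model: choose a resolution $\rho: Y\to Z$ together with a smooth adapted morphism $f:Y\to X$ refining $g$, so that $f$ factors as $g\circ\rho$ (up to the usual normalization of the fibre product). The adapted cotangent sheaf $\Omega^1_{(Y,f,D)}$ on the smooth $Y$ is then semipositive with respect to $f^*\gamma$ by Theorem~\ref{thm:CP16}, since $(K_X+D)$ is pseudo-effective. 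The task is to transfer this semipositivity back across $\rho$ to $\Omega^{[1]}_{(Z,g,D)}$.

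Second, I would compare the two adapted cotangent sheaves. By their very construction as reflexive hulls of kernels of residue maps (Definition~\ref{def:adaptedsheaf}), there is a natural sheaf morphism relating $\rho^*\Omega^{[1]}_{(Z,g,D)}$ and $\Omega^1_{(Y,f,D)}$ that is an isomorphism over the locus where $\rho$ is an isomorphism, i.e.\ away from the $\rho$-exceptional locus. Concretely, the reflexive pullback $\rho^{[*]}\Omega^{[1]}_{(Z,g,D)}$ and $\Omega^1_{(Y,f,D)}$ agree in codimension one on $Y$ (they differ at worst by a twist supported on exceptional divisors). Thus a torsion-free quotient of $\Omega^{[1]}_{(Z,g,D)}\onto\sQ$ pulls back to a quotient of $\Omega^1_{(Y,f,D)}$ modulo exceptional corrections.

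Third, I would exploit the hypothesis that $\gamma$ is a \emph{complete intersection} movable cycle, $\gamma\equiv B_1\cdots B_{n-1}$. This is exactly the feature that lets one control the intersection numbers against exceptional divisors: a complete intersection cycle can be represented by (the pushforward of) a class that meets the generic $\rho$-exceptional locus trivially, so that the exceptional twists contribute nothing to the relevant degree computation. In other words, for the slope/degree that governs semipositivity, $[\,\rho^*\sQ\,]\cdot f^*\gamma = [\sQ]\cdot g^*\gamma$, the exceptional corrections being annihilated by the complete intersection class. This is the step I expect to be the main obstacle, since it requires a careful projection-formula bookkeeping showing that the exceptional divisor $E$ satisfies $E\cdot f^*\gamma=0$ (equivalently $\rho_*(\text{exceptional part})$ vanishes against $\gamma$), and that the reflexive extension does not introduce additional defect in codimension one.

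Finally, I would assemble these observations: given any torsion-free $\bQ$-twisted quotient of $\Omega^{[1]}_{(Z,g,D)}$, pull it back to $Y$, realize it (up to exceptional twist) as a quotient of the semipositive sheaf $\Omega^1_{(Y,f,D)}$, apply Theorem~\ref{thm:CP16} to get nonnegativity of its degree against $f^*\gamma$, and then push the inequality back down using $E\cdot f^*\gamma=0$ and the projection formula $f^*\gamma=\rho^*(g^*\gamma)$ to conclude $[\sQ]\cdot g^*\gamma\geq 0$. Since this holds for every quotient, $\Omega^{[1]}_{(Z,g,D)}$ is semipositive with respect to $g^*\gamma$, as claimed.
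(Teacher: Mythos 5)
Your reduction hinges on an object that does not exist: a resolution $\rho\colon Y\to Z$ such that $f:=g\circ\rho$ is a \emph{smooth adapted morphism}. By Definition~\ref{def:adapted}, an adapted morphism is finite, surjective and Galois, and Theorem~\ref{thm:CP16} is stated only for such finite covers with smooth source. If $Z$ is singular and $\rho$ is a genuine resolution, then $g\circ\rho$ is generically finite but not finite, hence not adapted, and Theorem~\ref{thm:CP16} does not apply to it; in fact the sheaf $\Omega^1_{(Y,f,D)}$ you want to invoke the theorem for is not even defined in this paper for non-finite $f$ (Definition~\ref{def:adaptedsheaf} presupposes an adapted morphism). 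Conversely, a smooth strictly adapted cover $f\colon Y\to X$ furnished by Kawamata's construction (cf.~\cite[Prop.~4.1.12]{Laz04-II}) is in general \emph{not} birational to $Z$ over $X$ --- the two covers correspond to different field extensions of $K(X)$ --- so no such $\rho$ exists in that direction either. Consequently your second and third steps (the codimension-one comparison of $\rho^{[*]}\Omega^{[1]}_{(Z,g,D)}$ with $\Omega^1_{(Y,f,D)}$, and the vanishing $E\cdot f^*\gamma=0$) rest on a setup that cannot be arranged. Note also that the complete intersection hypothesis on $\gamma$ is not there to kill exceptional divisors of a birational map: its role is to make the pull-back of the $1$-cycle class under \emph{finite} morphisms well defined (as a product of pulled-back divisor classes) and to run the projection formula along such morphisms.

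The paper's argument replaces your birational factorization by a fibre-product-plus-Galois-descent mechanism, which is the missing idea. One fixes an auxiliary smooth strictly adapted cover $f\colon Y\to X$, unrelated to $Z$, and lets $W$ be an irreducible component of the normalization of $Y\times_X Z$, with finite projections $u\colon W\to Y$, $v\colon W\to Z$ and $h=f\circ u=g\circ v$. A subsheaf $\sG_Z\subset\Omega^{[1]}_{(Z,g,D)}$ violating semipositivity pulls back to $v^{[*]}\sG_Z\subset\Omega^{[1]}_{(W,h,D)}$, and negativity of the relevant degree is preserved by the projection formula applied to the complete intersection class $h^*\gamma=v^*g^*\gamma$. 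Since $\Omega^{[1]}_{(W,h,D)}=u^*\bigl(\Omega^1_{(Y,f,D)}\bigr)$ by construction of $f$, one takes the torsion-free quotient of minimal slope; its kernel $\sG_W$ is unique, hence invariant under $G=\Gal(W/Y)$, and by descent for saturated $G$-subsheaves (\cite[Thm.~4.2.15]{MR2665168} or \cite[Prop.~2.16]{GKPT15}) it comes from a reflexive subsheaf $\sG_Y\subset\Omega^1_{(Y,f,D)}$ with $u^{[*]}\sG_Y=\sG_W$. Pushing the degree inequality down to $Y$ via the projection formula then contradicts Theorem~\ref{thm:CP16} on the smooth cover $Y$. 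If you rework your proof along these lines --- comparing $Z$ with an independent smooth adapted cover through their common finite cover $W$, rather than through a nonexistent resolution --- the rest of your outline (quotient bookkeeping, projection formula, conclusion for every quotient) goes through.
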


\begin{proof}
Assume that $Z$ is not smooth, otherwise the claim follows from the arguments of Campana
and P\u{a}un, cf.~\cite{CP16}. Let $D=\sum d_i \cdot D_i$, where $D_i$ are prime divisors and 
$d_i= 1 - (b_i/a_i) \in [0,1]\cap \bQ$. 
By assumption, for every $D_i$, we have $g^*(D_i)=a_i\cdot D_{Z,i}$, for some 
$D_{Z,i}\in \WDiv(Z)$. 

Now, set $f:Y \to X$ to be a strongly adapted morphism (Definition ~\ref{def:adapted}), where,  
thanks to Kawamata's construction, cf.~\cite[Prop.~4.1.12]{Laz04-II}, 
the variety $Y$ is smooth. 
Let $W$ be the normalization of fibre product 
$Y\times_X Z$ with the resulting commutative diagram

$$
  \xymatrix{
    W \ar[rr]^{v}  \ar[d]_{u}  \ar[drr]^{h}  &&  Z  \ar[d]^{g} \\
       Y \ar[rr]^{f}       && X.
  }
  $$
 Aiming for a contradiction, assume that $\Omega_{(Z,g,D)}^{[1]}$ is not semipositive 
 with respect to $g^*\gamma$, that is there exists a reflexive subsheaf 
 $\sG_Z \subset \Omega_{(Z,g,D)}^{[1]}$ such that 
 \begin{equation}\label{eq:negative1}
 \bigl(  \gamma^*(K_X+D) - [\sG_Z]  \bigr) \cdot g^*\gamma <0.
 \end{equation}
 We consider $v^{[*]}(\sG_Z) \subset \Omega_{(W,h,D)}^{[1]}$.
 As $\gamma$ is, numerically, a complete intersection cycle, we can use the 
 projection formula to conclude that 
 \begin{equation}\label{eq:negative2}
 \bigl( h^*(K_X+D) - [v^{[*]} \sG_Z] \bigr) \cdot h^*\gamma <0,
 \end{equation}
 which implies that $\Omega_{(W,h,D)}^{[1]}$ is not semipostive with respect to 
 $h^*\gamma$. Now, let $\Omega_{(W,h,D)}^{[1]} \onto \sF_W$ be the torsion 
 free quotient with the minimum slope
 with the kernel $\sG_W$:
 
 \begin{equation}\label{eq:minimal1}
 0 \to \sG_W \to \Omega_{(W,h,D)}^{[1]}  \to  \sF_W  \to 0.
 \end{equation} 
 Assuming that $u:W \to Y$ is Galois, let $G:=\Gal(W/Y)$. Notice that by the construction of $f$, we have 
 $\Omega_{(W,h,D)}^{[1]}=u^*(\Omega_{(Y,f,d)}^1)$.
 Now, as the inclusion $\sG_W \subset \Omega_{(W,h,D)}^1$ 
 is saturated, and since $\sG_W$ is a $G$-subsheaf (thanks to its
 uniqueness), according to~\cite[Thm.~4.2.15]{MR2665168}
 or~\cite[Prop.~2.16]{GKPT15}, 
 there exists a reflexive subsheaf $\sG_Y \subset \Omega^1_{(Y,f,D)}$ 
 such that $u^{[*]}(\sG_Y) \cong \sG_W$.
 
 Now, by applying the $G$-invariant section functor $u_*(\cdot )^G$ to the exact sequence (\ref{eq:minimal1})
 we find that
 \begin{equation}
 0 \to \sG_Y  \to  \Omega_{(Y,f,D)}^1 \to \bigl( u_*(\sF_W)  \bigr)^G \to 0.
 \end{equation}
 But, by the projection formula, it follows that
 
 $$
 \big(   f^*(K_X+D) - [\sG_Y]  \big) \cdot f^*\gamma \leq 0,
 $$
 i.e. $\Omega_{(Y,f,D)}^1$ is not semipositive with respect to $f^*\gamma$, which contradicts 
 Theorem~\ref{thm:CP16}.
 
 For the case where $u$ is not Galois, we can consider the Galois closure 
 $u' :  W' \xrightarrow{\sigma} W \xrightarrow{u} Y$ and repeat the above 
 argument for $\sigma^{[*]}(\Omega^{[1]}_{(W, h, D)})$ instead of $\Omega^{[1]}_{(W, h, D)}$.
 \end{proof}

\vspace{2 mm}

The next proposition is the extension of Theorem~\ref{thm:CP16} to a special class of complete 
intersection, movable $1$-cycles on a mildly singular $X$.

\begin{proposition}[Semipositivity for mildly singular pairs]
\label{prop:SingCP}
Given a projective pair $(X,D)$, assume that
$(K_X+D)$ is pseudoeffective. 
Let $H_1 \ldots , H_{n-1} \in \Amp(X)_{\bQ}$ 
and $P\in \overline\Mov^1(X)$.
Then, 
for any strictly adapted morphism $f:Y\to X$, the orbifold cotangent sheaf 
$\Omega_{(Y,f,D)}^{[1]}$ is semipositive with respect to 
$f^*(H_1, \ldots, H_{n-2}, P+H_{n-1})$, if $(X,D)$ verifies one of the following assumptions. 

\begin{enumerate}
\item \label{item:sing1}$(X,D)$ has only klt singularities.
\item \label{item:sing2} $D$ is reduced (i.e. $\lfloor D \rfloor =0$) and $(X,D)$ has only lc singularities. 
\end{enumerate}

\end{proposition}

\begin{proof}
Assume that the assumption~\ref{item:sing1} holds. 
Let $\pi: (\wtilde X, \wtilde D)\to (X,D)$ be a log-resolution and $\wtilde Y$
the normalization of the fibre product $Y\times_X \wtilde Y$
with the commutative diagram
$$
  \xymatrix{
    \wtilde Y \ar[rr]^{\wtilde f}  \ar[d]_{\wtilde \pi}  && \wtilde X  \ar[d]^{\pi} \\
       Y \ar[rr]^{f}       && X, 
  }
  $$
where $\wtilde \pi:\wtilde Y \to Y$ and $\wtilde f: \wtilde Y \to Y$ are the 
naturally induced projections.

For simplicity, and as the arguments are identical in higher dimensions, 
we only deal with the case when $\dim X=3$.
Denote $H_{Y,i}=f^*(H_i)$, for $i\in \{ 1, 2\}$ and $P_Y=f^*(P)$.

Now, aiming for a contradiction, assume that $\Omega_{(Y,f,D)}^{[1]}$ is not semipositive 
with respect to $(H_{Y,1}, P_{Y}+H_{Y,2})$.
This implies that there exists a saturated subsheaf $\sG \subset \sT_{(Y, f, D)}$
such that $[\sG]\cdot (H_{Y,1}, P_{Y}+H_{Y,2}) > 0$.
Define $\wtilde \sH :=  (\wtilde \pi^{[*]} \sH) \cap \sT_{(\wtilde Y, \wtilde f, D)}$. 
Let $m$ be a sufficiently large positive integer such that the $1$-cycle 
$\gamma\in \Mov_1(Y)$, that is 
numerically equivalent to the cycle defined by $m^2(H_{Y,1}, P_{Y}+H_{Y,2})$,
is away from the exceptional centre of $\wtilde \pi$. 
Existence of such $\gamma$ in particular guarantees that 

\begin{equation*}\label{eq:NegativeQuot}
 [\wtilde \sH] \cdot \wtilde \pi^* (H_{Y,1}, P_{Y}+H_{Y,2})  > 0.
\end{equation*}
In other words there exists a torsion-free quotient sheaf 

\begin{equation}\label{eq:ineq-first}
\Omega_{(\wtilde Y, \wtilde f, \wtilde D)}^{[1]} \onto \wtilde \sF 
\end{equation}
on $\wtilde Y$ such that 
$\deg(\wtilde \sF|_{\wtilde \gamma}) <0$, where $\wtilde \gamma:= \wtilde\pi^{-1}(\gamma)$.

\noindent
Now, let us consider the logarithmic ramification formula 
  \begin{equation*}
    K_{\wtilde X}+\wtilde D=\pi^*(K_X+D)+\sum a_i\cdot E_i - \sum b_i\cdot E_i', 
    \end{equation*}
where $a_i\in \bQ^+$, and, because of the assumptions on the singularities, $b_i \in (0,1) \cap \bQ$. 
Define $\wtilde G:=\sum b_i\cdot E_i'$
and let $\wtilde h : Z \to \wtilde X$ be the morphism adapted to $(\wtilde X, \wtilde D+\wtilde G)$, factoring 
through $\wtilde f: \wtilde Y \to \wtilde X$

$$
 \begin{xymatrix}{
     Z  \ar@/^5mm/[rrrr]^{\wtilde h} \ar[rr]_{r} && \wtilde Y
     \ar[rr]_{\wtilde f} && \wtilde X} ,
    \end{xymatrix} 
 $$ 
 as in Lemma~\ref{lem:Adapt}.
Set $B_Z:= \wtilde h^*( \pi^*(H_1, P + H_2))$ and $B_{\wtilde Y}:= \wtilde f^*(\pi^*(H_1, P + H_2))$.  
Now, let $\sG_{\wtilde Y}$ 
be the kernel of the sheaf morphism (\ref{eq:ineq-first}) so that 

\begin{equation}\label{eq:negativekernel}
\bigl( \wtilde f^*(K_{\wtilde X}+\wtilde D)  - [\sG_{\wtilde Y}] \bigr) \cdot B_{\wtilde Y} <0 .
\end{equation}

As $\gamma$ is away from the exceptional centre of $\wtilde \pi$ and since $\wtilde G$ is supported 
on the exceptional locus of $\pi$, we have

 \begin{align*}
 \wtilde h^*(K_{\wtilde X}+\wtilde D+ \wtilde G)\cdot B_Z
        &= \wtilde h^*(K_{\wtilde X}+\wtilde D)\cdot B_Z  \\
        &= r^*(\wtilde f^*(K_{\wtilde X}+\wtilde D))\cdot B_Z.
 \end{align*}
As a result, for the inclusion 
$r^{[*]}(\sG_{\wtilde Y})\subset \Omega_{(Z,\wtilde h, \wtilde D+\wtilde G)}^{[1]}$, we find that

 \begin{align*}
 \bigg(   \Big[   \Omega_{(Z, \wtilde h, \wtilde D+\wtilde G)}^{[1]}  \Big]  - r^{[*]}\sG_{\wtilde Y}  \bigg)
         \cdot B_Z &= \Big(  r^*\big(  \wtilde f^*(K_{\wtilde X} + \wtilde D) \big) - r^{[*]}\sG_{\wtilde Y} \Big) \cdot B_Z \nonumber \\
         &= (\deg r) \Big( \wtilde f^*(K_{\wtilde X} +\wtilde D) - [\sG_{\wtilde Y}]  \Big) \cdot B_{\wtilde Y} \\
         &< 0, \quad\quad\quad\quad\quad \text{by Inequality~\ref{eq:negativekernel},}
 \end{align*}
 contradicting Proposition~\ref{prop:CP16refined}.
 
 Finally, if the assumption~\ref{item:sing2} holds, the proof follows from simply considering the 
 ramification formula as above and using Theorem~\ref{thm:CP16}.
 \end{proof}

\section{Pseudoeffectivity of the orbifold $c_2$}
\label{sect:section5_PseudoEffectivity}

In~\cite{Miyaoka87} Miyaoka famously proved that the second Chern class $c_2$ of a generically 
semipositive sheaf with nef determinant is pseudoeffective.
Thanks to his result on the semipositivity of 
cotangent sheaves, Miyaoka then established the pseudoeffectivity of 
$c_2(X)$ for any minimal model $X$. Our aim in this section is to 
generalize this result to the case of pairs $(X,D)$ with movable 
$(K_X+D)$ (Corollary~\ref{cor:pseff-c_2}).

\begin{proposition}[Pseudoeffectivity of $c_2$ for semipositive sheaves]\label{prop:c2} 
Let $X$ be a normal projective, threefold 
with isolated singularities and $A_1\in \Amp(X)_{\bQ}$. Then, the 
inequality 
$$
c_2(\sE)\cdot  A_1 \geq 0
$$
holds for any reflexive sheaf $\sE$ of rank $r$ verifying the following properties. 
\begin{enumerate}
\item $[\sE]\in \overline\Mov^1(X)$.
\item \label{SPAssump} For any $A_2\in \Amp(X)_{\bQ}$, the sheaf $\sE$ is semipositive with respect
to $(A_1, [\sE] + A_2)$.
\end{enumerate}

\end{proposition}

\begin{proof} Let $c$ be any positive integer. Consider the $\bQ$-twisted 
reflexive sheaf $\sE\langle \frac{1}{c} \cdot H\rangle$.
For the choice of polarization $(A_1,[\sE\langle \frac{1}{c}\cdot H\rangle])$, 
the assumptions of Proposition~\ref{prop:restrict} are
satisfied, for all $c$.

Now, let $S$ be the complete intersection surface 
defined in Proposition~\ref{prop:restrict} (see also Remark~\ref{rem:Qrestrict}) 
so that, using the assumption \ref{SPAssump} with $A_2:= \frac{r}{c}H$, the restriction 
$\sE_S\langle \frac{1}{c}\cdot H_S\rangle:=(\sE\langle \frac{1}{c} \cdot H \rangle)|_S$ 
is semipositive with respect to 

$$\beta:= c_1(\sE_S\langle \frac{1}{c}\cdot H_S\rangle) = ([\sE]+\frac{r}{c} \cdot [H])|_S.$$

Following the arguments of Miyaoka, we now consider two cases based 
on the stability of $\sE_S\langle \frac{1}{c}\cdot H_S\rangle$.

First, we consider the case where $\sE_S\langle \frac{1}{c}\cdot H_S\rangle$ 
is semistable with respect to $\beta$. Here, the semipositivity  of $c_2$ follows from 
Bogomolov--Gieseker inequality for $\bQ$-twisted locally-free sheaves 
(Proposition~\ref{prop:QBog}).

Now, we assume that $\sE_S\langle \frac{1}{c} \cdot H_S\rangle$ 
is not semistable with respect to $\beta$. Let 
    \begin{equation}\label{eq:HNFilter}
       0 \neq \sE_S^1\langle \frac{1}{m}\cdot H_S\rangle \subset \ldots 
          \subset \sE_S^t\langle \frac{1}{c}\cdot H_S \rangle
          =\sE_S\langle \frac{1}{c} \cdot H_S\rangle
              \end{equation}
be the $\bQ$-twisted HN-filtration of $\sE_S\langle 
\frac{1}{c} H_S\rangle$. 
Denote the semistable, torsion-free, $\bQ$-twisted sheaves 
$$\sE_S^i\langle \frac{1}{c} \cdot H_S\rangle / \sE_S^{i-1}\langle \frac{1}{c}\cdot  H_S\rangle$$ 
of rank $r_i$ by $\sQ^i_S\langle \frac{1}{c}\cdot  H_S \rangle$ and let $\overline{\sQ}_S^i
\langle \frac{1}{c}\cdot H_S \rangle $ denote its reflexivization. As the second 
Chern character $ch_2(\cdot)$ is additive, we have 

 \begin{align}\label{eq:BogQuot}
    2\cdot c_2(\sE_S\langle \frac{1}{c} \cdot H_S\rangle) - 
    c_1^2(\sE_S\langle \frac{1}{c} \cdot H_S\rangle)
    &= \sum \bigl(  2\cdot c_2(\sQ^i_S\langle \frac{1}{c}\cdot  H_S \rangle)
    - c_1^2(\sQ^i_S\langle \frac{1}{c}\cdot  H_S \rangle)  \bigr)    \\      \nonumber  
    & \geq \sum\bigl( 2\cdot c_2(\overline{\sQ}_S^i
     \langle \frac{1}{c}\cdot H_S \rangle) -c_1^2({\sQ}_S^i
      \langle \frac{1}{c}\cdot H_S \rangle)  \bigr), \\ \nonumber
          \end{align}
where the last inequality follows from the fact that $c_2(\sQ_S^i)\geq 
c_2(\overline{\sQ}_S^i)$. Now, by applying  
the Bogomolov inequality (\ref{prop:QBog}) to each semistable, $\bQ$-twisted 
sheaf $\overline{\sQ}_S^i\langle \frac{1}{c}\cdot H_S\rangle$, we find that each term in
the right-hand side of the inequality (\ref{eq:BogQuot}) verifies the inequality 

$$
2\cdot c_2(\overline{\sQ}_S^i
     \langle \frac{1}{c}\cdot H_S \rangle) -c_1^2({\sQ}_S^i
      \langle \frac{1}{c}\cdot H_S \rangle) \geq \frac{-1}{r_i} \cdot c_1^2(\sQ_S^i \langle \frac{1}{c}\cdot H_S \rangle ).
$$
Therefore we have
  \begin{equation}\label{eq:ineqmain}
       2\cdot c_2(\sE_S\langle \frac{1}{c} \cdot H_S\rangle) - 
    c_1^2(\sE_S\langle \frac{1}{c} \cdot H_S\rangle) \geq 
      \sum \frac{-1}{r_i} \cdot c_1^2(\sQ_S^i\langle \frac{1}{c}\cdot H_S\rangle).
       \end{equation}

Next, we define the rational number $\alpha_i\in \bQ$ by the equality
   \begin{equation}\label{eq:trick}
       r_i\cdot \alpha_i  =  \frac{c_1(\sQ_S^i \langle \frac{1}{c}\cdot H_S \rangle )
           \cdot \beta }{c_1^2(\sE_S\langle \frac{1}{c}\cdot H_S\rangle)} 
           = \frac{c_1(\sQ_S^i \langle \frac{1}{c}\cdot H_S \rangle )
           \cdot \beta}{\beta^2}.
             \end{equation}
It follows that 
   \begin{equation}\label{eq:trick-2}
     \sum r_i \cdot \alpha_i =1.
     \end{equation}                          
             
Furthermore, according to the definition of $\alpha_i$, 
and by using the fact that the slopes of the quotients of the HN-filtration (\ref{eq:HNFilter}) are strictly decreasing,
we know that 
   \begin{equation}\label{eq:decrease}
      \alpha_1 > \alpha_2 > \ldots >\alpha_t \geq 0, 
         \end{equation}
where the last inequality follows from the semipositivity of $\sE_S\langle \frac{1}{c} H_S\rangle.$

Now, as $\alpha_i\geq 0$, for each $i$, the equality (\ref{eq:trick-2}) implies that $\alpha_i \leq 1$. 
On the other hand, according to the Hodge index theorem we have 
$$ -c_1^2(\sQ_S^i\langle \frac{1}{c}\cdot H_S\rangle) \geq 
  \frac{\big(c_1(\sQ_S^i \langle \frac{1}{c} \cdot H_S \rangle) \cdot \beta\big)^2 }{\beta^2} ,$$
so that 
$$ -c_1^2(\sQ_S^i \langle \frac{1}{c}\cdot H \rangle )  \geq \beta^2 (r_i\cdot \alpha_i)^2.$$
Going back to the inequality (\ref{eq:ineqmain}) we now find that
       
       \begin{align*}
    2\cdot c_2(\sE_S\langle \frac{1}{c} \cdot H_S\rangle) 
     & \geq   \beta^2(  1-  \sum r_i\cdot \alpha_i^2 )   \\        
     & \geq \beta^2(1-\alpha_1 \sum r_i\cdot \alpha_i)  && \text{by (\ref{eq:decrease})}  \\
     & = \beta^2 (1-\alpha_1) && \text{by (\ref{eq:trick-2})}  \\
     & \geq 0  && \text{as $\alpha_1 \leq 1$.}
          \end{align*}
The inequality $c_2(\sE_S)\geq 0$ now follows by taking the limit $c \to \infty$.       
\end{proof}

As an immediate consequence, we can now prove the pseudoeffectivity of 
$c_2$ for the orbifold cotangent sheaves of pairs $(X,D)$ in dimension $3$ with only mild isolated
singularities and whose $K_X+D$ is movable.

\begin{corollary}[Positivity of $c_2$ of orbifold cotangent sheaves]\label{cor:pseff-c_2}
 Let $(X,D)$ be a projective pair of dimension $3$ and with only isolated singularities. Assume that either $D$ is 
 reduced and $(X,D)$ has only lc singularities 
 or $(X,D)$ is klt.
 If $(K_X+D)\in \overline \Mov^1(X)$,  
 then for any ample divisor $A\subset X$ and strongly adapted morphism $f: Y\to X$, 
 the inequality 
$$
c_2(\Omega^{[1]}_{(Y,f, D)})\cdot f^*(A)  \geq 0
$$
 holds. 
 \end{corollary}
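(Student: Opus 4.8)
The plan is to reduce the positivity of the orbifold $c_2$ to Proposition~\ref{prop:c2} by verifying that the reflexive sheaf $\sE:=\Omega^{[1]}_{(Y,f,D)}$ (or rather an appropriate push-forward/descent of it) satisfies the two hypotheses of that proposition: that its class $[\sE]$ lies in $\Mov^1(X)_{\bQ}$, and that it is semipositive with respect to the movable-type polarization $(A_1, [\sE]+A_2)$. The key observation is that $f$ is strongly adapted and Galois, so Chern classes on $Y$ are related to orbifold Chern classes on $X$ by a factor of $\deg f$, and positivity of $c_2$ downstairs is equivalent to positivity of $c_2(\Omega^{[1]}_{(Y,f,D)})\cdot f^*(A)$ upstairs. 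First I would set up the numerical identification: by the definition of the orbifold cotangent sheaf, $[\Omega^{[1]}_{(Y,f,D)}]=f^*(K_X+D)$ up to the standard orbifold correction, so the hypothesis $(K_X+D)\in \Mov^1(X)_{\bQ}$ translates directly into hypothesis~(1) of Proposition~\ref{prop:c2} for the pulled-back class, i.e. $[\sE]$ is movable.

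The semipositivity hypothesis~(2) is exactly the content of Proposition~\ref{prop:SingCP}: since $(X,D)$ has only isolated lc singularities and $(K_X+D)$ is pseudo-effective (being movable, hence in particular pseudo-effective), that proposition guarantees that $\Omega^{[1]}_{(Y,f,D)}$ is semipositive with respect to $f^*(A_1, P+A_2)$ for any ample $A_1,A_2$ and $P\in\Mov^1(X)_{\bQ}$. Taking $P=[\sE]$ (which is legitimate since $[\sE]$ is movable by the previous step) yields precisely the semipositivity with respect to $(A_1,[\sE]+A_2)$ demanded in hypothesis~(2). Thus both hypotheses of Proposition~\ref{prop:c2} are met for the sheaf $\Omega^{[1]}_{(Y,f,D)}$ on $Y$ — provided $Y$ itself is a normal projective threefold with isolated singularities smooth in codimension two, which is arranged because $f$ is strongly adapted and, by Kawamata's construction (Remark~\ref{rem:KawConstructions}), we may take $Y$ smooth in codimension two.

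Applying Proposition~\ref{prop:c2} to $\sE=\Omega^{[1]}_{(Y,f,D)}$ on $Y$ with the ample class $f^*(A)$ in the role of $A_1$ then gives $c_2(\Omega^{[1]}_{(Y,f,D)})\cdot f^*(A)\geq 0$, which is exactly the desired inequality. The main obstacle I anticipate is a bookkeeping issue rather than a conceptual one: one must check that the isolated lc singularities of $(X,D)$ do not obstruct applying Proposition~\ref{prop:c2} on $Y$, i.e. that the singularities of $Y$ remain isolated and that $Y$ stays smooth in codimension two so that the Chern classes $c_1,c_2$ of the reflexive sheaf are well-defined there. Since isolated singularities pull back under the finite morphism $f$ to isolated singularities, and the branch locus of a strongly adapted morphism is a divisor, the codimension-two smoothness is preserved, so this obstacle should be surmountable by a careful but routine verification. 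A secondary subtlety is matching the $\bQ$-coefficients of $D$ with the residue construction in Definition~\ref{def:adaptedsheaf}, but since $D$ is reduced here the integral structure simplifies and the identification $[\sE]=f^*(K_X+D)$ is clean.
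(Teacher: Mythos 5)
Your proposal is correct and follows essentially the same route as the paper, whose proof consists precisely of the identification $[\Omega^{[1]}_{(Y,f,D)}]=f^*(K_X+D)$ followed by an appeal to Proposition~\ref{prop:SingCP} (for the semipositivity hypothesis) and Proposition~\ref{prop:c2} (for the $c_2$ inequality). The only cosmetic correction: in invoking Proposition~\ref{prop:SingCP} the movable class $P$ lives on $X$, so one should take $P=K_X+D\in \Mov^1(X)_{\bQ}$ (whose pull-back is $[\sE]$) rather than $P=[\sE]$ itself.
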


\begin{proof}
As $[\Omega_{(Y,f,D)}^{[1]}] = f^*(K_X+D)$, 
the corollary is a direct consequence of Proposition~\ref{prop:c2} together with 
Proposition~\ref{prop:SingCP}.
\end{proof}

We would like to point out that once we assume that $(K_X+D)$ is nef, 
then an easy adaptation of the original results of Miyaoka to the case of 
orbifold Chern classes, together with the semipositivity result of~\cite{CP14}
(see also~\cite[Thm.~5.3]{CKT16})
leads to the following theorem. 

\begin{theorem}[Positivity of orbifold $c_2$ for log-minimal models]
\label{thm:PositiveC2}
Let $(X,D)$ be a projective lc pair of dimension $n$
that is log-smooth in codimension two.
If $(K_X+D)$ is nef, then for 
any strongly adapted morphism $f:Y \to X$ , we have
$$
c_2(\Omega_{(Y,f,D)}^{[1]}) \cdot f^*(A^{n-2}) \geq 0,
$$
where $A\subset X$ is any ample divisor. 

\end{theorem}

\begin{remark}
In the above results the assumption that $(X,D)$ is log-smooth in codimension two 
can be dropped if one is willing to work woth the so-called $\mathcal Q$-Chern classes.
But in this setting $(X,D)$ would have to be klt and additional assumptions would be 
needed to guarantee that the covering $Y$ has quotient singularities in codimension two.
\end{remark}

\section{An effective non-vanishing result for threefolds}
\label{sect:section6_Nonvanishing}

The goal of this section is to prove Theorem~\ref{thm:Main}.
The main point of the strategy is to devise an effective lower bound
for $\chi(K_Y+H)$, when $Y$ is terminal (and $(Y,H)$ is lc). 
First we recall the well-known fact that Hizerbruch--Riemann--Roch Theorem
holds for locally free sheaves over projective threefolds or surfaces with only mild 
singularities 
and, for the reader's convenience, include a short proof. 

\begin{proposition}
Let $X$ be a projective variety and $L$ a Cartier divisor on $X$.

\begin{itemize}
\item \label{RR3} If $X$ is a terminal threefold, then we have

\begin{IEEEeqnarray*}{rCl}\label{eq:HRRSing}
\chi(X,  L) & = & \frac{1}{12} \cdot L \cdot (L -K_X) \cdot (2L - K_X) \\
\IEEEyesnumber
&&   + \frac{1}{12} \cdot c_2(X)\cdot L +\chi(X,\sO_X).
\end{IEEEeqnarray*}

\item \label{RRSurface} If $X$ is of dimension two and with only rational singularities, then we have 

\begin{equation}\label{RR2}
\chi (L) = \frac{1}{2}  L^2  - \frac{1}{2}  L \cdot K_X  +  \chi(X,\sO_X).
\end{equation}

\end{itemize}

\begin{proof} 
We consider the threefold case first. Let $\pi:\wtilde X\to X$ be a resolution such that $\pi^{-1}|_{X_{\reg}}$ is an 
isomorphism. Remember that, as its singularities are only terminal, $X$ has only rational singularities 
(in this case since $X$ has only quotient
singularities~\cite[Cor.~4.39]{KM98}, the fact that $X$ has rational singularities follows from the 
definition). Consequently it follows that 

$$
\chi(\wtilde X, \pi^* L)  = \chi (X, L)
$$
and in particular we have $\chi(\sO_{\wtilde X}) = \chi (\sO_X)$.

On the other hand, by Hizerbruch--Riemann--Roch theorem for smooth projective 
threefolds (see~\cite[Ex.~6.7, App.~A]{Har77}) we have

\begin{IEEEeqnarray*}{rCl}\label{eq:HRR3fold}
\chi(\wtilde X, \pi^* L) & = & \frac{1}{12} \cdot  \pi^*L \cdot (\pi^*L -K_{\wtilde X}) \cdot (2\pi^*L - K_{\wtilde X}) \\
\IEEEyesnumber
&&   + \frac{1}{12} \cdot c_2(\wtilde X)\cdot \pi^* L +\chi(\wtilde X,\sO_{\wtilde X}).
\end{IEEEeqnarray*}
Using that fact that $X$ is smooth in codimension two, we now find that the right-hand sides
of (\ref{eq:HRRSing}) and (\ref{eq:HRR3fold}) are equal and therefore Equality (\ref{eq:HRRSing})
is established. 

A similar argument can now be used to show that the equality (\ref{RR2}) also holds.\end{proof}

\end{proposition}

\begin{proposition}[Lower bounds for the Euler characteristic of adjoint bundles]
\label{prop:KeyIneq}
For a terminal projective threefold $X$ the inequality 

\begin{equation}\label{eq:Key}
 \chi(X, K_X+A) \geq  \frac{1}{24} \cdot A \cdot  (K_X+A) \cdot (K_X + 2 A)
\end{equation}
holds, for any Weil divisor $A$ satisfying the following conditions.

\begin{enumerate}
\item $A$ is irreducible. 
\item \label{LCCond} The pair $(X,A)$ is lc and is log-smooth in codimension two.
\item The divisors $A$ and $(K_X+A)$ are Cartier and nef.
\end{enumerate}

\end{proposition}

\begin{proof}
According to (\ref{eq:HRRSing}), with $L$ being replaced by $(K_X+A)$, we have

\begin{IEEEeqnarray*}{rCl}\label{eq:HRR}
\chi(X,  K_X+D+A) & = & \frac{1}{12} \cdot (K_X+A) \cdot A\cdot \bigl( 2(K_X+A) - K_X\bigr)\\
\IEEEyesnumber
&&   + \frac{1}{12} \cdot c_2(X)\cdot (K_X+A) +\chi(X,\sO_X).
\end{IEEEeqnarray*}
Standard Chern class calculations show that we have the equality 

\begin{equation}\label{eq:c_2}
c_2(X) = c_2( \Omega^{[1]}_X\log (A) ) -  (K_X+A)\cdot A - A^2,
\end{equation}
as linear forms on $\N^1(X)_{\bQ}$.
After substituting back into Equality~\ref{eq:HRR}, we find that the equality 

\begin{IEEEeqnarray*}{rCl}\label{eq:long-1}
\chi (X, K_X+D+A) & = & \frac{1}{12}\cdot (K_X+A) \cdot \Big\{A\cdot (K_X+2A)\\ 
&&  + \  \cdot c_2( \Omega^{[1]}_X\log (A) ) -  (K_X+A) \cdot  A  -A^2 \Big\} \\
&& + \ \chi(X,\sO_X)
\end{IEEEeqnarray*}
holds, which then simplifies to 

\begin{equation}\label{eq:long-2}
\chi (X, K_X+A) = \frac{1}{12} (K_X+A) \cdot \Big\{ A^2+  c_2( \Omega^{[1]}_X\log (A) )\Big\} + \chi(X,\sO_X).
\end{equation}

On the other hand, as $X$ is terminal, we know, thanks to~\cite[Lems.~2.2 and~2.3]{Kaw86}, 
that 
\begin{equation}\label{eq:chi}
\chi(X,\sO_X) \geq  \frac{-1}{24} K_X\cdot c_2(X).
\end{equation}
Bu using the equality (\ref{eq:c_2}) we can rewrite this inequality as

\begin{IEEEeqnarray*}{rCl}\label{eq:long-3}
24\cdot \chi (X, \sO_X) & \geq &  \bigl( A - (K_X+A) \bigr)\cdot c_2(\Omega^{[1]}_X\log (A))\\
&& + \ K_X \cdot (K_X+A) \cdot A \\
&& \geq  (K_X+A) \cdot \Big\{   K_X\cdot A  - c_2(\Omega^{[1]}_X\log (A))  \Big\},
\end{IEEEeqnarray*}
where for the latter inequality we have used the assumption that $A$ is nef and 
the pseudoeffectivity of 
$c_2$ (Theorem~\ref{thm:PositiveC2}). Now, going back to Equation~\ref{eq:long-2}, we 
get

\begin{equation}\label{eq:long-4}
24 \chi (X, K_X+A)  \geq  (K_X+A)  \Big\{  2 A^2 + c_2(\Omega_X^{[1]} \log (A) ) + K_X\cdot A      \Big\}  .
\end{equation}
 Again, by using Corollary~\ref{cor:pseff-c_2} and the nefness assumptions on $(K_X+A)$, 
 we find that 
 \begin{equation}\label{eq:long-fin}
 24 \cdot \chi(X, K_X+A) \geq  (K_X+A)  \cdot (2A^2)  +  (K_X+A) \cdot A  =  (K_X+A) \cdot A \cdot (K_X + 2A) ,
 \end{equation}
 as required. 
 \end{proof}


\subsection{Proof of Theorem~\ref*{thm:Main}} 
\label{subsect:ENV}
Thanks to Kawamata--Viehweg vanishing~\cite[Thm.~5.2.7]{Mat02}, it suffices to prove that 
$\chi(Y, K_Y+H)\neq 0$.  
For a general choice of $H'\in |H|$, the pair $(Y, H')$ satisfies Assumption~\ref{LCCond}.
Therefore the assumptions of
Proposition~\ref{prop:KeyIneq} are satisfied except for the terminal 
assumption for the singularities. 

Now, let $\pi: X\to  Y$ be a terminalization of $Y$, cf.~\cite[Sect.~6.3]{KM98}.
Set $A:=\pi^*( H')$.
Since $\pi$ is small, the adjoint divisor $(K_X+A)$ is also nef. 
We may now conclude using the strict positivity of the right-hand side of 
the inequality (\ref{eq:Key}) by the following argument. 
According to the basepoint freeness theorem for log-canonical 
threefolds, cf.~\cite{KMM}, the divisor $K_X+A$ is semi-ample. Therefore, 
for sufficiently large integer $m$, 
we can find an 
irreducible surface $S \in | m\cdot(K_X+2A) |$ such that $(A|_S)$ is  
big. On the other hand, the divisor $(K_X+A)|_S$ is nef. It thus follows that  
$(K_X+A)|_S \cdot A|_S >0$, thanks to Kleiman's ampleness criterion (\cite[Thm.~1.4.29]{Laz04-I}).
\qed

\section{A Miyaoka--Yau inequality in higher dimensions}
In \cite{Miyaoka87}, Miyaoka generalized the famous inequality $c_1^2 \leq 3c_2$ 
from surfaces with pseudoeffective canonical divisor to higher dimensional varieties with nef canonical divisor. 
We extend this result to the case of movable canonical divisor.

\label{sect:section7_MY}

\begin{theorem}\label{HD}
In the setting of Corollary~\ref{cor:pseff-c_2}, we have the inequality 

$$c_1^2(\Omega_{(Y,f,D)}^{[1]})\cdot f^*A \leq 3c_2(\Omega_{(Y,f,D)}^{[1]})\cdot f^*A.$$

\end{theorem}

\begin{proof}
Let $\tilde{H} \in \Amp(X)_{\bQ}$, $H:=f^*\tilde{H}$ and  $\sE:=\Omega_{(Y,f,D)}^{[1]}$. Let $c$ any any positive integer.  
Consider the $\bQ$-twisted 
reflexive sheaf $\sE\langle \frac{1}{c} \cdot H\rangle$.
For the choice of polarization $(f^*A,[\sE\langle \frac{1}{c}\cdot H\rangle])$, 
the assumptions of Proposition~\ref{prop:restrict} are
satisfied, for all $c$.

Now, let $S$ be the complete intersection surface 
defined in Proposition~\ref{prop:restrict} (see also Remark~\ref{rem:Qrestrict})
so that the restriction 
$\sE_S\langle \frac{1}{c}\cdot H_S\rangle:=(\sE\langle \frac{1}{c} \cdot H \rangle)|_S$ 
is semipositive with respect to 
$$\beta:=([\sE]+\frac{r}{c} \cdot H)|_S.$$

Let 
    \begin{equation}\label{eq:HNMY}
       0 \neq \sE_S^1\langle \frac{1}{c}\cdot H_S\rangle \subset \ldots 
          \subset \sE_S^s\langle \frac{1}{c}\cdot H_S \rangle
          =\sE_S\langle \frac{1}{c} \cdot H_S\rangle
              \end{equation}
be the $\bQ$-twisted HN-filtration of $\sE_S\langle 
\frac{1}{c} H_S\rangle$.

The same arguments as those in the proof of Proposition~\ref{prop:c2} show that

$$
(2c_2(\sE_S\langle \frac{1}{c}\cdot H_S\rangle)-c_1^2(\sE_S\langle \frac{1}{c}\cdot H_S\rangle)) \geq (\sum \frac{-1}{r_i}c_1^2(\sQ_S^i)),
$$
where $\sQ_S^i\langle \frac{1}{c}\cdot H_S\rangle$ is the torsion free, $\bQ$-twisted quotient sheaf of rank $r_i$ of 
the filtration (\ref{eq:HNMY}).
 
Again, as in the proof of Proposition~\ref{prop:c2}, for each $i$, we define $\alpha_i$ by the equation 

$$r_i\cdot \alpha_i=\frac{c_1(\sQ_S^i\langle \frac{1}{c}\cdot H_S\rangle)\cdot \beta}{\beta^2}.$$
From the definition of $\alpha_i$, it follows that $\sum r_i \cdot \alpha_i=1$ . 
Moreover, we have $\alpha_1 > \dots > \alpha_s \geq 0,$ where
the last inequality is due to the semipositivity of $\sE_S\langle \frac{1}{c}\cdot H_S\rangle$.

We now deduce 



\begin{align*}
(6c_2(\sE_S\langle \frac{1}{c}\cdot H_S\rangle)- 2c_1^2(\sE_S\langle \frac{1}{c}\cdot H_S\rangle)) \geq \;\;\;\;\;\;\;\;\;\; 
\;\;\;\;\;\;\;\;\;\;  \;\;\;\;\;\;\;\;\;\;  \;\;\;\;\;\;\;\;\;\;  \;\;\;\;\;\;\;\;\;\;  \;\;\; \;\;\;\;\\
\left( 3 (\sum_{i>1} \frac{-1}{r_i}c_1^2(\mathcal{G}_i))+ 6c_2(\sE_S^1\langle \frac{1}{c}\cdot H_S\rangle)-3c_1^2(\sE_S^1\langle \frac{1}{c}\cdot H_S\rangle) +c_1^2(\sE_S\langle \frac{1}{c}\cdot H_S\rangle)\right).
\end{align*}

And finally,
\begin{multline}\label{fin}
(6c_2(\sE_S\langle \frac{1}{c}\cdot H_S\rangle) - 2c_1^2(\sE_S\langle \frac{1}{c}\cdot H_S\rangle)) \geq \\
((1-3 \sum_{i>1} r_i\alpha_i^2).c_1^2(\sE_S\langle \frac{1}{c}\cdot H_S\rangle) + 6c_2(\sE_S^1\langle \frac{1}{c}\cdot H_S\rangle)-3c_1^2(\sE_S^1\langle \frac{1}{c}\cdot H_S\rangle)).
\end{multline}

There are three possibilities: $r_1 \geq 3$, $r_1=2$ and $r_1=1$.

If $r_1 \geq 3$, using Bogomolov--Gieseker inequality and the Hodge index theorem, we obtain
\begin{align*}
(6c_2(\sE_S\langle \frac{1}{c}\cdot H_S\rangle) - 2c_1^2(\sE_S\langle \frac{1}{c}\cdot H_S\rangle))& \geq 
((1-3 \sum_{i>1} r_i\alpha_i^2)\cdot c_1^2(\sE_S\langle \frac{1}{c}\cdot H_S\rangle)-3\frac{1}{r_1}c_1^2(\sE_1))   \\
& \geq(1-3 \sum_{i} r_i\alpha_i^2)\cdot c_1^2(\sE_S\langle \frac{1}{c}\cdot H_S\rangle) \\
& \geq (1-3\alpha_1)\cdot c_1^2(\sE_S\langle \frac{1}{c}\cdot H_S\rangle)  \geq 0  .
\end{align*}
since $3\alpha_1 \leq r_1\alpha_1 \leq \sum_{i} r_i\alpha_i=1.$

\

If $r_1 = 2$, we choose $S$ general enough so that ${\sE_S^1}$ injects into $\Omega^1_S(\log (f^{-1} {\lceil D \rceil}_{|S})).$

Using the Bogomolov--Miyaoka--Yau inequality, we have either $\kappa(S, c_1(\sE_S^1)) \leq 0$ or $c_1^2(\sE_S^1) \leq 3c_2(\sE_S^1).$

In the case $\kappa(S, c_1(\sE_S^1)) \leq 0$, since $c_1(\sE_S^1).\beta>0$, we have $c_1^2(\sE_S^1) \leq 0.$

Applying Bogomolov--Gieseker inequality to \ref{fin}:
\begin{align*}
(6c_2(\sE_S\langle \frac{1}{c}\cdot H_S\rangle)- 2c_1^2(\sE_S\langle \frac{1}{c}\cdot H_S\rangle))  \geq 
((1-3 \sum_{i>1} r_i\alpha_i^2)\cdot c_1^2(\sE_S\langle \frac{1}{c}\cdot H_S\rangle)-\frac{3}{2}c_1^2(\sE_S^1\langle \frac{1}{c}\cdot H_S\rangle)) \\
\geq  
(1-3 \sum_{i>1} r_i\alpha_i^2)\cdot c_1^2(\sE_S\langle \frac{1}{c}\cdot H_S\rangle) -\frac{3}{2}c_1^2(\sE_S^1\langle \frac{1}{c}\cdot H_S\rangle)) \\
\geq
(1-3 \alpha_2\sum_{i>1} r_i\alpha_i)\cdot c_1^2(\sE_S\langle \frac{1}{c}\cdot H_S\rangle) -\frac{3}{2}c_1^2(\sE_S^1\langle \frac{1}{c}\cdot H_S\rangle))   \\
=
(1-3 \alpha_2 (1-2\alpha_1))\cdot c_1^2(\sE_S\langle \frac{1}{c}\cdot H_S\rangle) -\frac{3}{2}c_1^2(\sE_S^1\langle \frac{1}{c}\cdot H_S\rangle))  \\
\geq
(1-3 \alpha_1(1-2\alpha_1))\cdot c_1^2(\sE_S\langle \frac{1}{c}\cdot H_S\rangle) -\frac{3}{2}c_1^2(\sE_S^1\langle \frac{1}{c}\cdot H_S\rangle)) \\
=
\left(6(\alpha_1-\frac{1}{4})^2+\frac{5}{8}\right)\cdot c_1^2(\sE_S\langle \frac{1}{c}\cdot H_S\rangle) -\frac{3}{2}c_1^2(\sE_S^1\langle \frac{1}{c}\cdot H_S\rangle))\geq -\frac{3}{2}c_1^2(\sE_S^1\langle \frac{1}{c}\cdot H_S\rangle)).
\end{align*}

Finally, we obtain $(3c_2(\sE_S)- c_1^2(\sE_S)) \geq 0.$

In the case $c_1^2(\sE_S^1) \leq 3c_2(\sE_S^1)$ we have from \ref{fin}:
\begin{align*}
(6c_2(\sE_S\langle \frac{1}{c} H_S\rangle)- 2c_1^2(\sE_S\langle \frac{1}{c} H_S\rangle)) \geq 
((1-3 \sum_{i>1} r_i\alpha_i^2)c_1^2(\sE_S\langle \frac{1}{c} H_S\rangle)-c_1^2(\sE_S^1\langle \frac{1}{c} H_S\rangle) \\+(6c_2(\sE_S\langle \frac{1}{c} H_S\rangle)-2c_1^2(\sE_S^1\langle \frac{1}{c} H_S\rangle) \\
\geq
((1-4\alpha_1^2-3 \sum_{i>1} r_i\alpha_i^2) c_1^2(\sE_S\langle \frac{1}{c} H_S\rangle))+(6c_2(\sE_S\langle \frac{1}{c} H_S\rangle)-2c_1^2(\sE_S^1\langle \frac{1}{c} H_S\rangle)\\
\geq
((1-4\alpha_1^2-3 \alpha_2\sum_{i>1} r_i\alpha_i) c_1^2(\sE_S\langle \frac{1}{c}\dot H_S\rangle))+(6c_2(\sE_S\langle \frac{1}{c} H_S\rangle)-2c_1^2(\sE_S^1\langle \frac{1}{c} H_S\rangle)\\
=
((1-4\alpha_1^2-3\alpha_2(1-2\alpha_1)) c_1^2(\sE_S\langle \frac{1}{c} H_S\rangle)+(6c_2(\sE_S\langle \frac{1}{c} H_S\rangle)-2c_1^2(\sE_S^1\langle \frac{1}{c}\cdot H_S\rangle))\\
=
(1-2\alpha_1)(1+2\alpha_1-3\alpha_2)\cdot c_1^2(\sE_S\langle \frac{1}{c} H_S\rangle))+(6c_2(\sE_S\langle \frac{1}{c} H_S\rangle)-2c_1^2(\sE_S^1\langle \frac{1}{c} H_S\rangle).
\end{align*}

As $3\alpha_2 < r_1\alpha_1+r_2\alpha_2 \leq 1$, we have
\begin{equation*}
(6c_2(\sE_S)- 2c_1^2(\sE_S))\geq 0.
\end{equation*}

Finally, if $r_1=1$, a classical result of Bogomolov and Sommese (the Bogomolov--Sommese vanishing) implies that 
${\sE_S^1} \subset \Omega^1_S (\log (f^{-1} {\lceil \Delta \rceil}_{|S}))$ has Kodaira dimension at most one. 
Therefore $c_1^2(\sE_S^1) \leq 0.$ From \ref{fin}, one obtains:
\begin{align*}
(6c_2(\sE_S\langle \frac{1}{c}\cdot H_S\rangle)- 2c_1^2(\sE_S\langle \frac{1}{c}\cdot H_S\rangle)) \geq
((1-3 \sum_{i>1} r_i\alpha_i^2)\cdot c_1^2(\sE_S\langle \frac{1}{c}\cdot H_S\rangle)) -3c_1^2(\sE_S^1\langle \frac{1}{c}\cdot H_S\rangle) \\
\geq
((1-3 \alpha_1 \sum_{i>1} r_i\alpha_i)\cdot c_1^2(\sE_S\langle \frac{1}{c}\cdot H_S\rangle)) -3c_1^2(\sE_S^1\langle \frac{1}{c}\cdot H_S\rangle) \\
=
((1-3\alpha_1(1-\alpha_1))\cdot c_1^2(\sE_S\langle \frac{1}{c}\cdot H_S\rangle)) -3c_1^2(\sE_S^1\langle \frac{1}{c}\cdot H_S\rangle) \\
\geq
\left(1-\frac{3}{2}(1-\frac{1}{2})\right)\cdot c_1^2(\sE_S\langle \frac{1}{c}\cdot H_S\rangle)-3c_1^2(\sE_S^1\langle \frac{1}{c}\cdot H_S\rangle) \\
=
\frac{1}{4}c_1^2(\sE_S\langle \frac{1}{c}\cdot H_S\rangle)-3c_1^2(\sE_S^1\langle \frac{1}{c}\cdot H_S\rangle) \\
\geq 
-3c_1^2(\sE_S^1\langle \frac{1}{c}\cdot H_S\rangle).
\end{align*}

Therefore, we have 
\begin{equation*}
(6c_2(\sE_S)- 2c_1^2(\sE_S))\geq 0.
\end{equation*} \end{proof}

We finish this section by pointing out that 
when $(K_X+D)$ is nef, 
the original result of Miyaoka can be adapted to the case of 
orbifold Chern classes. This can then be combined with the semipositivity result of~\cite{CP14}
to conclude the following result. 

\begin{theorem}\label{thm:MYMain}
Let $(X,D)$ be an $n$-dimensional lc pair that is smooth in codimension two.
If $K_X+D$ is nef, then for any strongly adapted
morphism $f:Y\to X$ and any ample divisor $A$ in $X$, we have
\begin{equation}
 c_1^2(\Omega_{(Y,f,D)}^{[1]})\cdot f^*A^{n-2} \leq 
         3c_2(\Omega_{(Y,f,D)}^{[1]})\cdot  f^* A^{n-2}.
\end{equation}
\end{theorem}

\section{Remarks on Lang--Vojta's conjecture in codimension one}
\label{sect:LV}
A classical conjecture of Lang predicts that a variety of general type $X$, 
admits a proper algebraic subvariety that contains all subvarieties of $X$
that are \emph{not} of general type.
In this section, we will prove a particular case of this conjecture for codimension one 
subvarieties satisfying certain conditions: 
the codimension one subvariety will be assumed to be movable and with only canonical singularities.

First, an immediate application of the inequality (\ref{HD}) gives the following theorem.

\begin{theorem}
Let $X$ be a normal, projective and $\bQ$-factorial threefold such that $K_X\in \overline\Mov^1(X)$. 
Let $H$ be a nef divisor, 
$D$ a reduced, irreducible, normal divisor such that $(X,D)$ 
has only isolated lc singularities. 
Assume that $[D] \in \overline\Mov^1(X)$.
If $-K_D$ is pseudoeffective, then
\begin{equation}\label{candeg}
 K_X\cdot D \cdot H \leq (3c_2-c_1^2)\cdot H,
\end{equation}
where $c_i(X) : = c_i( \Omega^{[1]}_X )$.
\end{theorem}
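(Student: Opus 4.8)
The plan is to derive the inequality from the Miyaoka--Yau inequality of Theorem~\ref{HD}, applied to the pair $(X,D)$, together with adjunction along the surface $D$. I read $c_2=c_2(X)$ and $c_1^2=c_1(X)^2=K_X^2$ (well defined, since isolated singularities make $X$ smooth in codimension two), so that the right-hand side $H\cdot(3c_2-c_1^2)$ depends only on $X$ and $H$ --- precisely the shape needed for the ensuing finiteness statement. First I check that Theorem~\ref{HD} applies: since $K_X\in\Mov^1(X)_{\bQ}$, the divisor $D$ is movable, and $\Mov^1(X)$ is a convex cone, we have $(K_X+D)\in\Mov^1(X)_{\bQ}$, and $(X,D)$ has only isolated lc singularities by hypothesis. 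Applying Theorem~\ref{HD} and descending from the strongly adapted cover to the reflexive log sheaf on $X$ (legitimate because for a reduced boundary the orbifold cotangent sheaf is $f^{[*]}$ of $(\Omega_X\log D)^{**}$), and using $c_1\bigl((\Omega_X\log D)^{**}\bigr)=K_X+D$, gives
\[
(K_X+D)^2\cdot A\ \le\ 3\,c_2\bigl((\Omega_X\log D)^{**}\bigr)\cdot A
\]
for every ample $A$. Since both sides are linear and continuous in the polarization, I pass to the limit over ample classes approaching $H$ (for instance $H+\varepsilon A'$ with $A'$ ample, $\varepsilon\to 0$) to obtain the same inequality with the nef divisor $H$ in place of $A$.

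Next I rewrite the logarithmic second Chern class in terms of $c_2(X)$. By the standard Chern-class identity already used in the proof of Proposition~\ref{prop:KeyIneq} (Equation~\ref{eq:c_2}), one has, as linear forms on $\N^1(X)_{\bQ}$,
\[
c_2\bigl((\Omega_X\log D)^{**}\bigr)=c_2(X)+(K_X+D)\cdot D .
\]
Substituting this into the Miyaoka--Yau inequality, expanding $(K_X+D)^2=K_X^2+2K_X\cdot D+D^2$, rearranging, and adding $H\cdot K_X\cdot D$ to both sides, one is left with
\[
H\cdot K_X^2+H\cdot K_X\cdot D\ \le\ 3\,H\cdot c_2(X)+2\,H\cdot(K_X+D)\cdot D .
\]

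The remaining ingredient is adjunction on the surface $D$. Since $X$ is $\bQ$-factorial with only isolated singularities, $X$ is smooth at the generic point of every curve contained in $D$ and $D$ is Cartier there; hence the different in the adjunction formula vanishes and $K_D=(K_X+D)|_D$. As $H$ is nef, $H|_D$ is nef on the normal surface $D$, so pseudo-effectivity of $-K_D$ yields
\[
(K_X+D)\cdot D\cdot H=K_D\cdot H|_D=-(-K_D)\cdot H|_D\ \le\ 0 .
\]
Feeding this into the previous display kills the last term and gives $H\cdot K_X^2+H\cdot K_X\cdot D\le 3\,H\cdot c_2(X)$, that is $H\cdot K_X\cdot D\le H\cdot(3c_2-c_1^2)$, as required.

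The main obstacle is not the algebra but two technical points. First, the passage from the ample polarization $A$ in Theorem~\ref{HD} to the merely nef class $H$ requires the limiting argument sketched above, together with the descent of the orbifold inequality on the cover to the log sheaf on $X$. Second, the vanishing of the different in the adjunction formula is where the hypotheses that $(X,D)$ has only isolated lc singularities, that $D$ is normal, and that $X$ is $\bQ$-factorial are genuinely used; without it, the term $(K_X+D)\cdot D\cdot H$ could not be controlled. Finally, it is the identification of $c_2,c_1^2$ with the Chern numbers of $X$ via Equation~\ref{eq:c_2}, rather than with those of the log pair, that makes the right-hand side independent of $D$, which is exactly what the subsequent finiteness of such $D$ relies upon.
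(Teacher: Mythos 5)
Your proof is correct and follows essentially the same route as the paper's: apply Theorem~\ref{HD} to $(X,D)$ (with $K_X+D\in\Mov^1(X)_{\bQ}$ by convexity), convert the log Chern classes via $c_2\bigl((\Omega_X\log D)^{**}\bigr)=c_2(X)+(K_X+D)\cdot D$, rearrange, and kill the term $(K_X+D)\cdot D\cdot H=K_D\cdot H|_D$ using adjunction, nefness of $H$ and pseudo-effectivity of $-K_D$. You merely make explicit several points the paper leaves implicit (the passage from ample to nef polarizations by a limiting argument, the descent from the adapted cover to $(\Omega_X\log D)^{**}$, and the vanishing of the different), which is all to the good.
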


\begin{proof}
As $(K_X+D)\in \overline \Mov^1(X)$, from the inequality (\ref{HD}), we have 
$c_1^2(\Omega^{[1]}_X(\log D)) \cdot H \leq 3c_2(\Omega^{[1]}_X(\log D)) \cdot H.$ 
Therefore, we have

$$
(K_X+D)^2\cdot H \leq 3(c_2+(K_X+D)\cdot D)\cdot H.
$$ 
It follows that 
$$
2K_X\cdot D\cdot H \leq (3c_2-c_1^2)\cdot H+3(K_X+D)\cdot D\cdot H-D^2\cdot H.
$$
Finally, thanks to the adjunction formula, cf.~\cite[Prop.~16.4]{Kol92}, we get $$K_X\cdot D \cdot H \leq (3c_2-c_1^2)\cdot H +2K_D\cdot H|_D.$$
The inequality (\ref{candeg}) now follows from the assumption that $-K_D$ is pseudoeffective.
\end{proof}

\noindent
\emph{Proof of Theorem~\ref{thm:LV1}.}
Let $H$ be an ample divisor in $X$. The divisor $K_X$ is big 
so we can find a positive integer $m$ such that
$(m\cdot K_X - H)$ is linearly equivalent to an effective divisor $E$.

Let us first prove that the family of polarized varieties $(D,H|_D)$ is bounded. 
We note that as each $D$ has only rational singularities, using (\ref{RR2}), we see that
the coefficients of the Hilbert polynomial corresponding to $H|_D$ are determined by Riemann--Roch 
formula.
Therefore, the theorem of Koll\'ar and Matsusaka \cite{KM} 
applies, that is to bound the family $(D, H|_D)$, it suffices to
bound the intersection numbers 

$$
H^2 \cdot D \; \; \; \; \; \;   \;  \text{and} \; \;  \; \; \; \; H \cdot K_D=H\cdot (K_X+D)\cdot D.
$$

For $H^2\cdot D$, we note that, as long as $D$ is not a component of $E$
we can use
the inequality (\ref{candeg}), to get 
$$
0 \leq H^2\cdot D \leq mH\cdot (3c_2-c_1^2). 
$$

For the second term $K_D\cdot H$, 
we use Theorem~\ref{thm:MYIntro} to find 
\begin{align*}
0  & \leq 3c_2(\Omega^{[1]}_X(\log D)) \cdot H-c_1^2(\Omega^{[1]}_X(\log D)) \cdot H\\
  & =  (3c_2-c_1^2)\cdot H+ 2(K_X+D)\cdot D\cdot H-  K_X\cdot D\cdot H.
\end{align*}
We immediately deduce that 

$$ -\frac{1}{2} (3c_2-c_1^2)\cdot H \leq H\cdot (K_X+D)\cdot D=H \cdot K_D \leq 0.$$
Therefore, the family of polarized varieties $(D,H|_D)$ is bounded. 

It now remains to show the finiteness of the polarized varieties $(D, H|_D)$ with canonical singularities and $-K_D$ pseudoeffective. Let $d$ be the Hilbert polynomial of one of this object $D$ and $Hilb_X^d$ the corresponding Hilbert scheme. All other surfaces with canonical singularities in $Hilb_X^d$ are deformations of $D$. From simultaneous resolution of families of surfaces with canonical singularities \cite{KM98}, one can assume that the deformation is smooth. Then from the deformation invariance of the Kodaira dimension \cite{Siu98},  we obtain that such deformations of $D$ are not of general type.
If $Hilb_X^d$ is not finite then $X$ is covered by a family of varieties which are not of general type. This is impossible by the easy additivity of Kodaira dimensions and the fact that $X$ if of general type. The boundedness above gives that polarized varieties $(D, H|_D)$ with canonical singularities and $-K_D$ pseudoeffective are contained in finitely many such Hilbert schemes. This concludes the proof.
\qed

\

\begin{remark}\label{rem:LMProblems}
In~\cite[Thm.~4]{LuMi}, in the setting where $X$ is non-uniruled and smooth and $D$ is reduced, 
the Miyaoka--Yau inequality~\ref{thm:MYMain} is claimed to be valid. 
As a consequence a stronger version of Theorem~\ref{thm:LV1} is obtained. 
Unfortunately we have been unable to verify the details of the proof of~\cite[Thm.~4]{LuMi}. 
The main point of difficulty is that within the proof of this theorem, in~\cite[Subsect.~3.1]{LuMi}, 
the authors claim that given a smooth projective, threefold $X$ of general type with an 
ample divisor $H$, for sufficiently large $m$, there 
is a general member $S\in | m\cdot H|$ for which the following conditions hold.

\end{remark}

\begin{enumerate}
\item \label{item:Wrong1}The restriction $(\Omega^1_X\log (D))|_S$ is semipositive with respect to $(P_{\sigma}(K_X+D))|_S$, 
where $P_{\sigma}$ is the positive part of the divisorial Zariski decomposition of $K_X+D$, cf.~\cite[Chapt.~III]{Nak04}. 
\item \label{item:Wrong2} The restriction $(P_{\sigma}(K_X+D))|_S$ of the positive part of $K_X+D$ verifies the equality 
$P_{\sigma}(K_X+D)|_S \cdot N((K_X+D)|_S)=0$, where $N({K_X+D}|_S)$ is the negative part of the 
Zariski decomposition of the pseudoeffective divisor $(K_X+D)|_S$.
\end{enumerate}
Although Item~\ref{item:Wrong1} in the conditions above can most likely be 
recovered by~\cite[Thm.~2.1]{CP13} and the arguments in Sections~\ref{sect:section3_restriction} 
and~\ref{sect:section4_semipositivity}
in the current paper, the second condition~\ref{item:Wrong2} is more problematic as
the underlying assumption is that Zariski decomposition is functorial; a condition that in 
general does not hold.

\begin{remark}
Starting with a general type variety $X$ and a divisor $D$ such that $(X,D)$ is 
dlt, thanks to~\cite{BCHM10}, it is certainly possible to establish a Miyaoka--Yau 
inequality using a minimal model of $(X,D)$. More precisely, let 
$\pi: (X,D) \dashrightarrow (X', D')$ be a LMMP map resulting in the log-minimal model 
$(X',D')$. Let $\wtilde \pi: \wtilde X \to X'$ be a desingularization of $\pi$ factoring 
through $\mu: \wtilde X\to X$. Now, as
we pointed out prior to Theorem~\ref{thm:MYMain}, thanks to~\cite{CP14},
one can use the original arguments of Miyaoka,
together with those of Megyesi (and his use of $\mathcal Q$-Chern classes), 
to show that the inequality 
$$
\bigl(  3c_2(\Omega^{[1]}_{X'}\log (D') - (K_X'+D')^2)  \bigr)\cdot H^{n-2}\geq 0
$$
holds for any ample divisor $H\subset X'$. 
Furthermore, we can use known 
results on the behaviour of Chern classes under birational morphisms to 
show that 
\begin{equation}\label{eq:MYineqBlowup}
\bigl( 3c_2(\Omega^1_{\wtilde X}\log (\wtilde D)) - (K_{\wtilde X} + \wtilde D)^2 )   \bigr)\cdot 
     \wtilde \pi^*(H)^{n-2}\geq 0.
\end{equation}
But the inequality (\ref{eq:MYineqBlowup}) is hardly independent of the divisor $D$. In fact 
in the inequality (\ref{eq:MYineqBlowup}) even the polarization $(\pi^*H)$ depends on $D$. 
Therefore, the inequality (\ref{eq:MYineqBlowup}) is far from being useful in the context of 
Lang--Vojta's conjecture.

\end{remark}


\begin{thebibliography}{BCHM10}

\bibitem[BCHM10]{BCHM10}
Caucher Birkar, Paolo Cascini, Christopher~D. Hacon, and James McKernan.
\newblock Existence of minimal models for varieties of log general type.
\newblock {\em Journal of the AMS}, 23:405--468, 2010.
\newblock
  \href{http://dx.doi.org/10.1090/S0894-0347-09-00649-3}{DOI:10.1090/S0894-0347-09-00649-3}.
  
 \bibitem[Bog79]{Bog79} 
 Fedor Bogomolov.
 \newblock Holomorphic tensors and vector bundles on projective varieties.
  \newblock {\em Math. USSR Izv.}, 13, 449--555, 1979.
  

\bibitem[BDPP13]{BDPP}
S{\'e}bastien Boucksom, Jean-Pierre Demailly, Mihai P{\u a}un, and Thomas
  Peternell.
\newblock The pseudo-effective cone of a compact {K}{\"a}hler manifold and
  varieties of negative {K}odaira dimension.
\newblock {\em J. Algebraic Geom.}, 22(2):201--248, 2013.
\newblock \href{http://arxiv.org/abs/math/0405285}{arXiv:math/0405285}.

\bibitem[Bou04]{Boucksom04}
S{\'e}bastien Boucksom.
\newblock Divisorial {Z}ariski decompositions on compact complex manifolds.
\newblock {\em Ann. Sci. {\'E}cole Norm. Sup. (4)}, 37(1):45--76, 2004.
\newblock
  \href{http://dx.doi.org/10.1016/j.ansens.2003.04.002}{DOI:10.1016/j.ansens.2003.04.002}.

\bibitem[Cam04]{Ca04}
Fr{\'e}d{\'e}ric Campana.
\newblock Orbifolds, special varieties and classification theory.
\newblock {\em Ann. Inst. Fourier (Grenoble)}, 54(3):499--630, 2004.

\bibitem[CP11]{CP11}
Fr{\'e}d{\'e}ric Campana and Thomas Peternell.
\newblock Geometric stability of the cotangent bundle and the universal cover
  of a projective manifold.
\newblock {\em Bull. Soc. Math. France}, 139(1):41--74, 2011.

\bibitem[CP14]{CP14}
Fr{\'e}d{\'e}ric Campana and Mihai P{\u a}un.
\newblock Positivity properties of the bundle of logarithmic tensors on compact
  {K}{\"a}hler manifolds.
\newblock Preprint \href{http://arxiv.org/abs/1407.3431}{arXiv:1407.3431}, July
  2014.

\bibitem[CP15]{CP13}
Fr{\'e}d{\'e}ric Campana and Mihai P\u{a}un.
\newblock Orbifold generic semipositivity : an application to families of
  canonically polarized manifolds.
\newblock {\em Ann. Inst. Fourier (Grenoble)}, 65(2):835--861, 2015.
\newblock Available at
  \href{http://aif.cedram.org/item?id=AIF_2015__65_2_835_0}{http://aif.cedram.org/item?id=AIF\_2015\_\_65\_2\_835\_0}.
  Preprint \href{http://arxiv.org/abs/1303.3169}{arXiv:1303.3169}.

\bibitem[CP16]{CP16}
Fr{\'e}d{\'e}ric Campana and Mihai P{\u a}un.
\newblock Foliations with positive slopes and birational stability of orbifold
  cotangent bundles.
\newblock Preprint \href{https://arxiv.org/abs/1508.02456}{arXiv:1508.02456},
  April 2016.


\bibitem[CKT16]{CKT16}
Beno{\^\i}t Claudon, Stefan Kebekus, and Behrouz Taji
\newblock Generic positivity and applications to hyperbolicity of moduli spaces 
\newblock Preprint  \href{http://arxiv.org/abs/1610.09832}{arXiv:1610.09832},
 2016.



\bibitem[DG65]{EGA4-2}
J.~Dieudonn\'{e} and Alexandre Grothendieck.
\newblock \'{E}l\'{e}ments de g\'{e}om\'{e}trie alg\'{e}brique iv, \'{E}tude
  locale des sch\'{e}mas et des morphismes de sch\'{e}mas (seconde partie).
\newblock {\em Institut des Hautes \'{E}tudes Scientifiques publications
  math\'{e}matiques}, (24), 1965.

\bibitem[DPS94]{DPS}
Jean-Pierre Demailly, Thomas Peternell, and Michael Schneider.
\newblock Compact complex manifolds with numerically effective tangent bundles.
\newblock {\em J. Algebraic Geom.}, 3(2):295--345, 1994.

\bibitem[Ful98]{Ful98}
William Fulton
\newblock Intersection {T}heory
\newblock Springer-Verlag. Ergebnisse der Mathematik und ihrer Grenzgebiete. 3. Folge. A Series of Modern Surveys in Mathematics [Results in Mathematics and Related Areas. 3rd Series. A Series of Modern Surveys in Mathematics]. Volume 2. Second Edition, 1998.



\bibitem[GKP15]{GKP15}
Daniel Greb, Stefan Kebekus, and Thomas Peternell.
\newblock Movable curves and semistable sheaves.
\newblock {\em Int Math Res Notices}, 2015.
\newblock Published online May 19, 2015. To appear in print.
  \href{http://dx.doi.org/10.1093/imrn/rnv126}{DOI:10.1093/imrn/rnv126}.
  Preprint \href{http://arxiv.org/abs/1408.4308}{arXiv:1408.4308}.

\bibitem[GKPT15]{GKPT15}
Daniel Greb, Stefan Kebekus, Thomas Peternell, and Behrouz Taji.
\newblock The {M}iyaoka-{Y}au inequality and uniformisation of canonical
  models.
\newblock Preprint \href{http://arxiv.org/abs/1511.08822}{arXiv:1511.08822},
  November 2015.
  
 \bibitem[Har77]{Har77} 
  Robin Hartshorne
  \newblock Algebraic geometry
  \newblock Springer-Verlag, Graduate Texts in Mathematics, No. 52, (1977).
  

\bibitem[HL10]{MR2665168}
Daniel Huybrechts and Manfred Lehn.
\newblock {\em The geometry of moduli spaces of sheaves}.
\newblock Cambridge Mathematical Library. Cambridge University Press,
  Cambridge, second edition, 2010.

\bibitem[H{\"o}r12]{HOR12}
Andreas H{\"o}ring.
\newblock On a conjecture of {B}eltrametti and {S}ommese.
\newblock {\em J. Algebraic Geom.}, 21(4):721--751, 2012.

\bibitem[Kaw81]{Kaw86}
Yojiro Kawamata.
\newblock {On the plurigenera of minimal algebraic $3$-folds with $K_X\equiv
  0$}.
\newblock {\em Math. Annalen}, 275:539--546, 1981.


\bibitem[KMM04]{KMM}
Sean Keel, Kenji Matsuki, and James McKernan
\newblock {L}og abundance theorem for threefolds
\newblock {\em Duke Math. J.} {\bf 75} (1994), no. 1, 99--119.


\bibitem[Kol92]{Kol92}
J{\'a}nos Koll{\'a}r
\newblock Flips and abundance for algebraic threefolds
\newblock Papers from the Second Summer Seminar on Algebraic Geometry held at the University of Utah, Salt Lake City, Utah, August 1991, Ast{\'e}risque No. 211 (1992)

\bibitem[KM83]{KM}
J{\'a}nos Koll{\'a}r and Teruhisa Matsusaka.
\newblock Riemann-{R}och type inequalities.
\newblock {\em Amer. J. Math.}, 105(1):229--252, 1983.

\bibitem[KM98]{KM98}
J{\'a}nos Koll{\'a}r and Shigefumi Mori.
\newblock {\em Birational geometry of algebraic varieties}, volume 134 of {\em
  Cambridge Tracts in Mathematics}.
\newblock Cambridge University Press, Cambridge, 1998.

\bibitem[Kol92]{SecondAsterisque}
J{\'a}nos Koll{\'a}r.
\newblock {\em Flips and abundance for algebraic threefolds}.
\newblock Soci{\'e}t{\'e} Math{\'e}matique de France, Paris, 1992.
\newblock Papers from the Second Summer Seminar on Algebraic Geometry held at
  the University of Utah, Salt Lake City, Utah, August 1991, Ast{\'e}risque No.
  211 (1992).

\bibitem[Lan04]{Langer04a}
Adrian Langer.
\newblock Semistable sheaves in positive characteristic.
\newblock {\em Ann. of Math. (2)}, 159(1):251--276, 2004.

\bibitem[Laz04-I]{Laz04-I}
Robert Lazarsfeld.
\newblock {\em Positivity in algebraic geometry. {I}}, volume~48 of {\em
  Ergebnisse der Mathematik und ihrer Grenzgebiete. 3. Folge. A Series of
  Modern Surveys in Mathematics [Results in Mathematics and Related Areas. 3rd
  Series. A Series of Modern Surveys in Mathematics]}.
\newblock Springer-Verlag, Berlin, 2004.
\newblock Classical Settings: Line bundles and Linear Series.

\bibitem[Laz04-II]{Laz04-II}
Robert Lazarsfeld.
\newblock {\em Positivity in algebraic geometry. {II}}, volume~49 of {\em
  Ergebnisse der Mathematik und ihrer Grenzgebiete. 3. Folge. A Series of
  Modern Surveys in Mathematics [Results in Mathematics and Related Areas. 3rd
  Series. A Series of Modern Surveys in Mathematics]}.
\newblock Springer-Verlag, Berlin, 2004.
\newblock Positivity for vector bundles, and multiplier ideals.

\bibitem[LM97]{LuMi}
Steven Shin-Yi Lu and Yoichi Miyaoka.
\newblock Bounding codimension one subvarieties and a general inequality
  between {C}hern numbers.
\newblock {\em Amer. J. Math.}, 119(3):487--502, 1997.


\bibitem[Mat02]{Mat02}
Kenji Matsuki
\newblock Introduction to the {M}ori program
\newblock Springer-Verlag
\newblock 2002


\bibitem[Miy87]{Miyaoka87}
Yoichi Miyaoka.
\newblock The {C}hern classes and {K}odaira dimension of a minimal variety.
\newblock In {\em Algebraic geometry, Sendai, 1985}, volume~10 of {\em Adv.
  Stud. Pure Math.}, pages 449--476. North-Holland, Amsterdam, 1987.

\bibitem[MR82]{MR82}
Vikram~B. Mehta and Annamalai Ramanathan.
\newblock Semistable sheaves on projective varieties and their restriction to
  curves.
\newblock {\em Math. Ann.}, 258(3):213--224, 1981/82.


\bibitem[Nak04]{Nak04}
Noboru Nakayama.
\newblock Zariski-decomposition and abundance
\newblock Mathematical Society of Japan, MSJ Memoirs, xiv+277, 2004.



\bibitem[Rei77]{Rei77}
Miles Reid.
\newblock Bogomolov's theorem $c_1^2 \leq 4c_2$.
\newblock {\em Int. Symp. on Algebraic Geometry}, pages 632--642, 1977.
 
 \bibitem[Siu98]{Siu98}
 Yum-Tong Siu.
 \newblock Invariance of plurigenera.
 \newblock {\em Invent. Math.}, 134(3):661--673, 1998.
    
\end{thebibliography}
\end{document}